\newcommand{\F}[1]{\mathbb{F}_{#1}}
\newcommand{\K}[1]{\mathbb{K}(#1)}
\newcommand{\0}{\mathbf{0}}
\newcommand{\C}{\mathcal{C}}
\newcommand{\GL}[1]{{\rm GL}\left(#1\right)}
\newcommand{\GO}[1]{{\rm GO}\left(#1\right)}
\newcommand{\GOplus}[1]{{\rm GO^+}(#1)}
\newcommand{\GOminus}[1]{{\rm GO^-}(#1)}
\newcommand{\GU}[1]{{\rm GU}\left(#1\right)}
\newcommand{\GSp}[1]{{\rm GSp}\left(#1\right)}
\newcommand{\Sp}[1]{{\rm Sp}\left(#1\right)}
\newcommand{\U}[1]{{\rm U}\left(#1\right)}
\renewcommand{\O}[1]{{\rm O}\left(#1\right)}
\newcommand{\Oplus}[1]{{\rm O^+}\left(#1\right)}
\newcommand{\Ominus}[1]{{\rm O^-}\left(#1\right)}
\newcommand{\SL}[1]{{\rm SL}\left(#1\right)}
\newcommand{\AGL}[1]{{\rm AGL}\left(#1\right)}
\newcommand{\GamL}[1]{{\rm \Gamma L}\left(#1\right)}
\newcommand{\GamSp}[1]{{\rm \Gamma Sp}\left(#1\right)}
\newcommand{\GamU}[1]{{\rm \Gamma U}\left(#1\right)}
\newcommand{\GamO}[1]{{\rm \Gamma O}\left(#1\right)}
\newcommand{\PGamL}[1]{{\rm P \Gamma L}\left(#1\right)}
\newcommand{\GamI}[1]{{\rm \Gamma I}\left(#1\right)}
\newcommand{\Sym}[1]{{\rm Sym}\left(#1\right)}
\newcommand{\Aut}[1]{{\rm Aut}\left(#1\right)}
\newcommand{\Cay}[1]{{\rm Cay}(#1)}
\newcommand{\diam}[1]{{\rm diam}(#1)}
\newcommand{\im}[1]{{\rm Im} \,(#1)}
\newcommand{\V}[1]{V(#1)}
\newcommand{\E}[1]{E(#1)}
\newcommand{\Stab}[2]{{\textnormal{Stab}}_{#1}{\left( #2 \right)}}
\newcommand{\Dim}[2]{{\textnormal{dim}}_{#2}{\left(#1\right)}}
\renewcommand{\dim}[1]{{\textnormal{dim}}\left(#1\right)}
\renewcommand{\min}[1]{{\textnormal{min}}\left\{#1\right\}}
\newcommand{\dist}[2]{{\rm dist}_{#2}{(#1)}}
\newcommand{\wt}[1]{{\rm wt}{\left( #1 \right)}}
\newtheorem{lemma}{Lemma}[subsection]
\newtheorem{proposition}[lemma]{Proposition}
\newtheorem{theorem}[lemma]{Theorem}
\newtheorem{corollary}[lemma]{Corollary}
\newtheorem{maintheorem}{Theorem}[section]
\newtheorem{hypothesis}{Hypothesis}[section]
\theoremstyle{definition}
\newtheorem*{remark}{Remark}
\newtheorem*{notation}{Notation}
\title{Affine primitive symmetric graphs of diameter two
}
\author{Carmen Amarra \footnote{Corresponding author. mcamarra@math.upd.edu.ph \quad Institute of Mathematics, University of the Philippines - Diliman, C. P. Garcia Avenue, Diliman, Quezon City 1101, Philippines}
\and Michael Giudici \footnote{Michael.Giudici@uwa.edu.au \quad Centre for the Mathematics of Symmetry and Computation, The University of Western Australia, 35 Stirling Highway, Perth, WA 6009, Australia}
\and Cheryl E. Praeger \footnote{Cheryl.Praeger@uwa.edu.au \quad Centre for the Mathematics of Symmetry and Computation, The University of Western Australia 35 Stirling Highway, Perth, WA 6009, Australia}
}
\date{\today}
\begin{document}

\maketitle

\begin{abstract}
Let $n$ be a positive integer, $q$ be a prime power, and $V$ be a vector space of dimension $n$ over $\F{q}$. Let $G := V \rtimes G_0$, where $G_0$ is an irreducible subgroup of $\GL{V}$ which is maximal by inclusion with respect to being intransitive on the set of nonzero vectors. We are interested in the class of all diameter two graphs $\Gamma$ that admit such a group $G$ as an arc-transitive, vertex-quasiprimitive subgroup of automorphisms. In particular, we consider those graphs for which $G_0$ is a subgroup of either $\GamL{n,q}$ or $\GamSp{n,q}$ and is maximal in one of the Aschbacher classes $\C_i$, where $i \in \{2,4,5,6,7,8\}$. We are able to determine all graphs $\Gamma$ which arise from $G_0 \leq \GamL{n,q}$ with $i \in \{2,4,8\}$, and from $G_0 \leq \GamSp{n,q}$ with $i \in \{2,8\}$. For the remaining classes we give necessary conditions in order for $\Gamma$ to have diameter two, and in some special subcases determine all $G$-symmetric diameter two graphs.

\medskip\noindent \emph{Mathematics subject classification:} 05C25, 20B15, 20B25
\end{abstract}

\section{Introduction} \label{sec:intro}

A \emph{symmetric} graph is one which admits a subgroup of automorphisms that acts transitively on its arc set; if $G$ is such a subgroup, we say in particular that the graph is \emph{$G$-symmetric}. We are interested in the family of all symmetric graphs with diameter two, a family which contains all symmetric strongly regular graphs. We consider those $G$-symmetric diameter two graphs where $G$ is a primitive group of affine type, and where the point stabiliser $G_0$ is maximal in the general semilinear group or in the symplectic semisimilarity group. Our main result is Theorem \ref{mainthm:HA}. Those affine examples where $G_0$ is not contained in either of these groups were studied in \cite{HAcase}.

\begin{maintheorem} \label{mainthm:HA}
Let $V = \F{q}^n$ for some prime power $q$ and positive integer $n$, and let $G = V \rtimes G_0$, where $G_0$ is an irreducible subgroup of the general semilinear group $\GamL{n,q}$ or the symplectic semisimilarity group $\GamSp{n,q}$, and $G_0$ is maximal by inclusion with respect to being intransitive on the set of nonzero vectors in $V$. If $\Gamma$ is a connected graph with diameter two which admits $G$ as a symmetric group of automorphisms, then $\Gamma$ is isomorphic to a Cayley graph $\Cay{V,S}$ for some orbit $S$ of $G_0$ satisfying $\langle S \rangle = V$ and $S = -S$, and one of the following holds:
	\begin{enumerate}
	\item $(G_0,S)$ are as in Tables {\rm\ref{tab:mainthm-HA}} and {\rm\ref{tab:mainthm-HA-Sp}};
	\item $G_0$ satisfies the conditions in Table {\rm\ref{tab:mainthm-HA2}};
	\item $G_0$ belongs to the class $\C_9$.
	\end{enumerate}
Furthermore, all pairs $(G_0,S)$ in Tables {\rm\ref{tab:mainthm-HA}} and {\rm\ref{tab:mainthm-HA-Sp}} yield $G$-symmetric diameter two graphs $\Cay{V,S}$.
\end{maintheorem}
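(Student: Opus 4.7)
First I would reduce the problem to a question about connection sets. Since $V$ acts regularly on itself by translation and $G_0$ stabilises $\0$, every $G$-vertex-transitive graph on $V$ is a Cayley graph $\Cay{V,S}$ with $S = \Gamma(\0)$. The arc-transitivity of $G$ then forces $S$ to be a single $G_0$-orbit on $V\setminus\{\0\}$, connectedness yields $\langle S\rangle = V$, and $S=-S$ comes from $\Gamma$ being undirected. The diameter-two hypothesis is then equivalent to the combinatorial identity $V\setminus\{\0\} = S \cup (S+S)$. The remainder of the proof is to classify those pairs $(G_0,S)$ satisfying this identity, and to verify that each pair in Tables~\ref{tab:mainthm-HA} and \ref{tab:mainthm-HA-Sp} actually does.

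\textbf{Aschbacher reduction and case analysis.} The hypothesis that $G_0\le\GamL{n,q}$ or $\GamSp{n,q}$ is irreducible and maximal with respect to being intransitive on $V\setminus\{\0\}$ places $G_0\cap\GL{n,q}$ (respectively $G_0\cap\GSp{n,q}$) in one of the Aschbacher classes, and irreducibility eliminates $\C_1$. For each remaining class $\C_i$ with $i\in\{2,4,5,6,7,8\}$, the plan is to (i) describe the possible $G_0$, (ii) list the $G_0$-orbits on $V\setminus\{\0\}$ together with any fusions forced by the semilinear part, and (iii) test the identity $V\setminus\{\0\} = S\cup(S+S)$ for each orbit $S$. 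Classes $\C_2, \C_4, \C_8$ in $\GamL{n,q}$, and $\C_2, \C_8$ in $\GamSp{n,q}$, admit sufficiently explicit orbit descriptions (short vectors supported in one block of an imprimitive decomposition, rank-one tensors, subfield rescalings of a chosen vector) that the test can be carried out in full; the surviving pairs populate Tables~\ref{tab:mainthm-HA} and \ref{tab:mainthm-HA-Sp}. In the remaining classes the orbit structure is more intricate and only yields necessary conditions, which give Table~\ref{tab:mainthm-HA2}; $\C_9$ admits no uniform treatment at this level of generality and therefore appears as conclusion~(iii).

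\textbf{Sufficiency and main obstacle.} For the ``furthermore'' clause, for each row of Tables~\ref{tab:mainthm-HA} and \ref{tab:mainthm-HA-Sp} I would exhibit explicit decompositions $v = s + s'$ with $s,s'\in S$ for representative $v\in V\setminus(S\cup\{\0\})$, using the block, tensor, or subfield structure governing that case. The chief obstacle throughout is precisely the test ``$V\setminus\{\0\} \subseteq S + S$'': in the tensor classes $\C_4$ and $\C_7$ it becomes a statement about how rank behaves under the sum of two low-rank tensors, which is delicate and is the main reason $\C_7$ resists full resolution here. A secondary source of complication is bookkeeping when the field-automorphism factor of $\GamL{n,q}$ fuses $\GL{n,q}$-orbits into larger $G_0$-orbits, since this changes which $S$ satisfy $S=-S$ and $\langle S\rangle=V$, and it must be tracked uniformly within each class. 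I expect $\C_2$ and $\C_8$ to be the most straightforward, $\C_4$ to require a short tensor-rank argument, and $\C_5, \C_6, \C_7$ to be the classes that cannot be closed without further restrictions, consistent with the partial statement of the theorem.
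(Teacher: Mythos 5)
Your proposal follows essentially the same route as the paper: the reduction to Cayley graphs with $S$ a $G_0$-orbit satisfying $S=-S$, $\langle S\rangle=V$ and $V^\#=S\cup(S+S)$ (the paper's Lemma \ref{lem:HAgraphs}), then an Aschbacher class-by-class determination of orbits and a test of the diameter-two identity, with the bound $|V|\le|S|^2+1$ supplying the necessary conditions of Table \ref{tab:mainthm-HA2}. One small correction: the fully resolved cases are not confined to $\C_2,\C_4,\C_8$ — the paper also settles subcases of $\C_5$ (when $c(v)\in\{r-1,r\}$ or $c(v)=1$ with $n=2$ or $r=2$), $\C_6$ with $t=1$, and $\C_7$ with $t=2$, which account for lines 3--6 of Table \ref{tab:mainthm-HA} and line 3 of Table \ref{tab:mainthm-HA-Sp}, so your expectation that $\C_5,\C_6,\C_7$ yield no complete examples understates what the same method delivers.
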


\bigskip\noindent
\textbf{Notation for Tables \ref{tab:mainthm-HA} and \ref{tab:mainthm-HA-Sp}.}
The set $X_s$ is as in (\ref{eq:X_s}) and $W_\beta$ is as in (\ref{eq:W_beta}) in Section \ref{sec:C2}, $Y_s$ is as in (\ref{eq:Y_s}) in Section \ref{sec:C4}, $c(v)$ is as in (\ref{eq:c(v)}) in Section \ref{sec:C5}, $S_0$ is as in (\ref{eq:S_lambda}) in Section \ref{subsec:semilin}, and $S_\#$, $S_\square$ and $S_\boxtimes$ are as in (\ref{eq:S_theta}) in Section \ref{sec:C8}. Cayley graphs are defined in Section \ref{subsec:Cay}. The graphs marked $\dagger$ did not appear in \cite{HAcase}.

\begin{table}[ht]
\begin{center}
\renewcommand{\arraystretch}{1.25} \small
\begin{tabular}{rlll}
\hline
& $G_0 \cap \GL{n,q}$ & $S$ & Conditions \\
\hline\hline
{\footnotesize\textsf{1}} & $\GL{m,q} \wr \Sym{t}$, $mt = n$ & $X_s$ & $q^m > 2$ and $s \geq t/2$ \\
{\footnotesize\textsf{2}} & $\GL{k,q} \otimes \GL{m,q}$, $km = n$ & $Y_s$ & $s \geq \frac{1}{2}\min{k,m}$ \\
{$^\dagger$\footnotesize\textsf{3}} & $\GL{n,q^{1/r}} \circ Z_{q-1}$, $r > 2$ and $n > 2$ & $v^{G_0}$ as in (\ref{eq:C5orbits}) & $c(v) = r-1$ or $c(v) = r$ \\
{$^\dagger$\footnotesize\textsf{4}} & $\GL{n,q^{1/r}} \circ Z_{q-1}$, $r=2$ or $n=2$ & $v^{G_0}$ as in (\ref{eq:C5orbits}) &  $c(v) = 1$ \\
{\footnotesize\textsf{5}} & $(Z_{q-1} \circ (Z_4 \circ Q_8)).\Sp{2,2}$, $n = 2$, $q$ odd & $v^{G_0}$ & $v \in V^\#$ \\
{\footnotesize\textsf{6}} & $\GL{m,q} \wr_\otimes \Sym{2}$, $m^2 = n$ & $Y_s$ & $s \geq m/2$ \\
{$^\dagger$\footnotesize\textsf{7}} & $\GU{n,q}$, $n \geq 2$ & $S_0$, $S_\#$ & \\
{\footnotesize\textsf{8}} & $\GO{n,q}$, $n = 3$ and $q = 3$ & $S_0$ \\
{\footnotesize\textsf{9}} & $\GO{n,q}$, $nq$ odd, $n > 3$ or $q > 3$ & $S_0$, $S_\square$, or $S_\boxtimes$ \\
{\footnotesize\textsf{10}} & $\GOplus{n,q}$, $n$ even, $q$ odd, $n > 2$ or $q > 2$ & $S_0$ or $S_\#$ \\
{\footnotesize\textsf{11}} & $\GOminus{n,q}$, $n$ even, $q$ odd, $n > 2$ & $S_0$ or $S_\#$ \\
\hline
\end{tabular} \caption{Symmetric diameter two graphs from maximal subgroups of $\GamL{n,q}$} \label{tab:mainthm-HA}
\end{center}
\end{table}

\begin{table}[ht]
\begin{center}
\renewcommand{\arraystretch}{1.25} \small
\begin{tabular}{rlll}
\hline
& $G_0 \cap \GL{n,q}$ & $S$ & Conditions \\
\hline\hline
{\footnotesize\textsf{1}} & $\Sp{m,q}^t.[q-1].\Sym{t}$, $mt = n$ & $X_s$ & $q^m > 2$ and $s \geq t/2$ \\
{$^\dagger$\footnotesize\textsf{2}} & $\GL{m,q}.[2]$, $2m = n$ & $\bigcup_{\sigma \in \Aut{\F{q}}}W_{\beta^\sigma}$ & $q^m > 2$ and $\beta \in \F{q}$ \\ 
{$^\dagger$\footnotesize\textsf{3}} & $(Z_{q-1} \circ Q_8).\Ominus{2,2}$, $n = 2$, $q$ odd & $v^{G_0}$ & $v \in V^\#$ \\
{\footnotesize\textsf{4}} & $\GOplus{n,q}$, $n = 2$ and $q = 2$ & $S_0$ \\
{\footnotesize\textsf{5}} & $\GOplus{n,q}$, $q$ and $n$ even, $n > 2$ or $q > 2$ & $S_0$ or $S_\#$ \\
{\footnotesize\textsf{6}} & $\GOminus{n,q}$, $q$ and $n$ even, $n > 2$ & $S_0$ or $S_\#$ \\
\hline
\end{tabular} \caption{Symmetric diameter two graphs from maximal subgroups of $\GamSp{n,q}$} \label{tab:mainthm-HA-Sp}
\end{center}
\end{table}

\begin{table}[ht]
\begin{center}
\renewcommand{\arraystretch}{1.25} \small
\begin{tabular}{rlll}
\hline
 & $G_0 \cap \GL{n,q}$ & Conditions & Restrictions \\
\hline\hline
{\footnotesize\textsf{1}} & $\GSp{k,q} \otimes {\rm GO}^\epsilon(m,q)$, $m$ odd, $q > 3$ &  & Proposition \ref{prop:C4-diam2-Sp} 
\\
{\footnotesize\textsf{2}} & $\GL{n,q^{1/r}} \circ Z_{q-1}$ & $c(v) \neq r-1,r$ & Proposition \ref{prop:C5main} (2), (3), (4) 
\\
{\footnotesize\textsf{3}} & $(Z_{q-1} \circ R).\Sp{2t,r}$, $n = r^t$ & $R$ Type 1, $t \geq 2$ & Proposition \ref{prop:C6bounds} (1) 
\\
{\footnotesize\textsf{4}} & $(Z_{q-1} \circ R).\Sp{2t,2}$, $n = r^t$ & $R$ Type 2, $t \geq 2$ & Proposition \ref{prop:C6bounds} (2) 
\\
{\footnotesize\textsf{5}} & $(Z_{q-1} \circ R).\Ominus{2t,2}$, $n = r^t$ & $R$ Type 4, $t \geq 2$ & Proposition \ref{prop:C6bounds} (3) 
\\
{\footnotesize\textsf{6}} & $\GL{m,q} \wr_\otimes \Sym{t}$, $m^t = n$ & $t \geq 3$ & Proposition \ref{prop:C7bounds} 
\\
{\footnotesize\textsf{7}} & $\GSp{m,q} \wr_\otimes \Sym{t}$, $m^t = n$, $q$ odd & $t \geq 3$ & Proposition \ref{prop:C7bounds-Sp} 
\\
\hline
\end{tabular} \caption{Restrictions for remaining cases} \label{tab:mainthm-HA2}
\end{center}
\end{table}

The reduction to these cases is achieved as follows. It is shown in \cite{quotcomp} that any symmetric diameter two graph has a normal quotient graph $\Gamma$ which is $G$-symmetric for some group $G$ and which satisfies one of the following:
	\begin{enumerate}[(I)]
	\item the graph $\Gamma$ has at least one nontrivial $G$-normal quotient, and all nontrivial $G$-normal quotients of $\Gamma$ are complete graphs (that is, every pair of distinct vertices are adjacent); or \label{case:q-c}
	\item all $G$-normal quotients of $\Gamma$ are trivial graphs (that is, consisting of a single vertex). \label{case:qp}
	\end{enumerate}
The context of our investigation is the following. It was shown that those that satisfy (\ref{case:qp}) fall into eight types according to the action of $G$ \cite{Pra93}. One of these types is known as HA (see Subsection \ref{subsec:Cay}). In this case, the vertex set is a finite-dimensional vector space $V = \F{p}^d$ over a prime field $\F{p}$ and $G = V \rtimes G_0$, where $V$ is identified with the group of translations on itself and $G_0$ is an irreducible subgroup of $\GL{d,p}$ which is intransitive on the set of nonzero vectors of $V$. The irreducible subgroups of $\GL{d,p}$ can be divided into eight classes $\C_i$, $i \in \{2, \ldots, 9\}$, most of which can be described as preserving certain geometric configurations on $V$, such as direct sums or tensor decompositions \cite{Asch}. Note that, if a diameter two graph $\Gamma$ is $G$-symmetric, then the stabiliser $G_v$ of a vertex $v$ is not transitive on the remaining vertices since $G_v$ leaves invariant the sets of vertices at distance $1$, and distance $2$, from $v$. Thus, in our situation, the group $G_0$ is intransitive on the set $V^\#$, where $V^\# := V \setminus \{0\}$, the set of nonzero vectors.  In paper \cite{HAcase} we considered the graphs corresponding to the groups $G_0$ which are maximal in their respective classes $\C_i$, for $i \leq 8$, and which are intransitive on nonzero vectors. (We did not consider the last class $\C_9$ since the groups in this class do not have a uniform geometric description.) Several classes were not considered because the maximal groups in these classes are transitive on $V^\#$, namely, the maximal groups are (a) symplectic groups preserving a nondegenerate alternating bilinear form on $V$, and (b) ``extension field groups" preserving a structure on $V$ of an $n$-dimensional vector space over $\F{q}$, where $q^n = p^d$. The aim of this paper is to examine the cases not treated in \cite{HAcase}, namely, $G_0$ preserves either an alternating form or an extension field structure on $V$, and:
	\begin{enumerate}[(I)] \setcounter{enumi}{2}
	\item The group $G_0$ is irreducible and is maximal in $\GL{d,p}$ with respect to being intransitive on nonzero vectors. \label{G_0-condition}
	\end{enumerate}

All quasiprimitive groups of type HA are primitive; the condition of irreducibility of $G_0$ is necessary to guarantee that $G_0$ is maximal in $G$, and hence that $G$ is primitive. In particular, since $G_0$ is intransitive on $V^\#$, $G_0$ does not contain $\SL{V}$ or $\Sp{V}$. The classification in \cite{Asch} can be applied to the two groups $\GamL{n,q}$ and $\GSp{d,p}$: the irreducible subgroups of $\GamL{n,q}$ and of $\GSp{d,p}$ which do not contain $\SL{n,q}$ and $\Sp{d,p}$, respectively, are again organised into classes $\C_2$ to $\C_9$. Again we do not consider the $\C_9$-subgroups. Observe that of the maximal subgroups of $\GamL{n,q}$ in classes $\C_2$ to $\C_8$, the only transitive ones are the $\C_3$-subgroups $\GamL{m,q^{n/m}}$ with $n/m$ prime, and the $\C_8$-subgroup $\GamSp{n,q}$ of symplectic semisimilarities. We avoid these possibilities by choosing $q$ maximal such that $q^n = p^d$. We then consider the two cases: (1) where $G_0 \leq \GamL{n,q}$ and $G_0$ does not  preserve an alternating form on $\F{q}^n$, and (2) where $G_0 \leq \GamSp{n,q}$. Note that in this case it is possible for $d/n$ to be not prime, and it follows from the maximality of $q$ that $G_0$ is not contained in a proper $\C_3$-subgroup of $\GamL{n,q}$ or $\GamSp{n,q}$, respectively. Since $G_0$ is irreducible and we are not considering $\C_9$-subgroups, we now have $G_0$ a maximal intransitive subgroup in the $\C_i$ (for $\GamL{n,q}$ or $\GamSp{n,q}$) for some $i \in \{2, 4, 5, 6, 7, 8\}$.

%

All such subgroups of $\GamL{n,q}$ for which $n = d$ and $i \neq 5$ are considered in \cite{HAcase}; moreover, for some of these cases, the arguments were given in the general setting of $\C_i$-subgroups of $\GamL{n,q}$, and so can be applied here. The cases requiring the most detailed arguments are those for subfield groups and, to a lesser extent, normalisers of symplectic-type $r$-groups ($\C_i$-groups with $i \in \{5, 6\}$).

As in \cite{HAcase}, for each family of groups $G_0$ we have two main tasks:
	\begin{itemize}
	\item to determine the $G_0$-orbits, and
	\item to identify which of these orbits correspond to diameter two Cayley graphs.
	\end{itemize}
In the instances where we are not able to achieve either of these, we obtain bounds on certain parameters to reduce the number of unresolved cases.

The rest of this paper is organised as follows: In Section 2 we give the relevant background on affine quasiprimitive permutation groups, semilinear transformations and semisimilarities. In Subsection 2.3 we present Aschbacher's classification of the subgroups of $\GamL{n,q}$ and $\GamSp{n,q}$. Section 3 is devoted to the proof of Theorem \ref{mainthm:HA}, which we do by considering separately the maximal intransitive subgroups in each of the classes $\C_i$, where $i \in \{ 2, 4, 5, 6, 7, 8 \}$.

\begin{notation}
If $A$ is a vector space, a finite field, or a group, $A^\#$ denotes the set of nonzero vectors, nonzero field elements, or non-identity group elements, respectively. The finite field of order $q$ is denoted by $\F{q}$. The notation used for the classical groups, some of which is nonstandard, is presented in Section \ref{sec:prelims}. If $\Gamma$ is a graph, $\V{\Gamma}$ and $\E{\Gamma}$ are, respectively, its vertex set and edge set.
\end{notation}

\section{Preliminaries} \label{sec:prelims}

\subsection{Cayley graphs and HA-type groups} \label{subsec:Cay}

The action of a group $G$ on a set $\Omega$ is said to be \emph{quasiprimitive of type HA} if $G$ has a unique minimal normal subgroup $N$ and $N$ is elementary abelian and acts regularly on $\Omega$. The group $G$ is then a subgroup of the holomorph $N.\Aut{N}$ of $N$ (hence the abbreviation HA, for \emph{h}olomorph of an \emph{a}belian group). It follows from \cite[Lemma 16.3]{biggs} that a graph $\Gamma$ that admits $G$ as a subgroup of automorphisms is isomorphic to a \emph{Cayley graph} on $N$, that is, a graph with vertex set $N$ and edge set $\{ \{x,y\} \ | \ x-y \in S \}$ for some subset $S$ of $N^\#$ with $S = -S$ and $0 \notin S$. (Since $N$ is abelian we use additive notation, and in particular denote the identity by $0$ and call it zero.) Such a graph is denoted by $\Cay{N,S}$. If, in addition, $\Gamma$ is $G$-symmetric, then $S$ must be an orbit of the point stabiliser $G_0$ of zero. Thus, in order for $\Gamma$ to have diameter two, the group $G_0$ must be intransitive on the set of nonzero elements in $N$.

The result that is most relevant to our investigation is Lemma \ref{lem:HAgraphs}, which follows from the basic properties of Cayley graphs and quasiprimitive groups of type HA.

\begin{lemma} \cite{Pra93} \label{lem:HAgraphs}
Let $\Gamma$ be a graph and $G \leq \Aut{\Gamma}$, where $G$ acts quasiprimitively on $\V{\Gamma}$ and is of type HA. Then $G \cong \F{p}^d \rtimes G_0 \leq \AGL{d,p}$ and $\Gamma \cong \Cay{\F{p}^d,S}$ for some finite field $\F{p}$, where the vector space $\F{p}^d$ is identified with its translation group and $G_0 \leq \GL{d,p}$ is irreducible. Moreover, $\Gamma$ is $G$-symmetric with diameter $2$ if and only if $S$ is a $G_0$-orbit of nonzero vectors satisfying $-S = S$, $S \subsetneq V$ and $S \cup (S+S) = V$.
\end{lemma}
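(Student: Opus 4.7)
The approach I would take has three stages: derive the structural form $G \leq \AGL{d,p}$ with $G_0 \leq \GL{d,p}$ irreducible; recognise $\Gamma$ as a Cayley graph on the minimal normal subgroup; and translate being $G$-symmetric with diameter two into orbit and covering conditions on the connection set.

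First I would unpack the HA-type hypothesis. The unique minimal normal subgroup $N$ of $G$ is elementary abelian of order $p^d$ for some prime $p$ and acts regularly on $\V{\Gamma}$, so $N \cong \F{p}^d$. Fixing a vertex, identified with $0 \in N$, regularity yields a splitting $G = N \rtimes G_0$ where $G_0$ is the stabiliser of $0$. The standard fact that the centraliser of a regular abelian subgroup of $\Sym{\V{\Gamma}}$ equals that subgroup gives $C_G(N) = N$, so the conjugation action of $G_0$ on $N$ is faithful and provides an embedding $G_0 \hookrightarrow \Aut{N} = \GL{d,p}$, whence $G \leq \AGL{d,p}$. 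Irreducibility of $G_0$ on $N$ then follows from quasiprimitivity: any $G_0$-invariant $\F{p}$-subspace $W \leq N$ is centralised by $N$ (as $N$ is abelian) and normalised by $G_0$ by construction, hence is normal in $G$; quasiprimitivity forces $W$ to be transitive on $\V{\Gamma}$ or trivial, and regularity of $N$ then forces $W = 0$ or $W = N$.

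Next, using the identification of $\V{\Gamma}$ with $N$ via the regular action, each element of $N$ acts on $\V{\Gamma}$ as a graph automorphism by translation, so the quoted \cite[Lemma 16.3]{biggs} yields $\Gamma \cong \Cay{N,S}$ with $S$ the set of neighbours of $0$; simplicity and undirectedness give $0 \notin S$ and $S = -S$.

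For the final equivalence, $G$ is automatically vertex-transitive (since $N$ already is), so $\Gamma$ is $G$-symmetric if and only if $G_0$ acts transitively on the neighbourhood of $0$, that is, $S$ is a single $G_0$-orbit. Writing $V := N$ additively, the ball of radius $2$ around $0$ is $\{0\} \cup S \cup (S+S)$; the assumption $-S = S$ forces $0 = s + (-s) \in S+S$ for any $s \in S$, so diameter at most $2$ is equivalent to $V = S \cup (S+S)$, and the strict containment $S \subsetneq V$ excludes the degenerate complete-graph (diameter $1$) case. The only real care required throughout is the bookkeeping of the dual role of $N$ as both the acting group and (under the identification) the vertex set; beyond that, each implication is a direct unpacking of definitions, so I would not anticipate any serious obstacle.
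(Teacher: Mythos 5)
Your argument is correct and is precisely the standard derivation that the paper relies on: it cites \cite{Pra93} and \cite[Lemma 16.3]{biggs} for this lemma without writing out a proof, and your three stages (self-centralising regular abelian $N$ giving $G_0\hookrightarrow\GL{d,p}$ with quasiprimitivity forcing irreducibility, the Sabidussi/Biggs identification with a Cayley graph, and the translation of arc-transitivity and diameter two into the orbit and covering conditions on $S$) fill in exactly the intended details. The only cosmetic point is that, since $0\notin S$, the condition $S\subsetneq V$ is literally automatic and is really meant as $S\neq V^\#$, which is how you correctly read it when excluding the complete graph.
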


The condition $-S = S$ implies that $|S+S| \leq |S|(|S|-1) + 1$, and if $S$ is a $G_0$-orbit then clearly $|S| \leq |G_0|$. It follows from Lemma \ref{lem:HAgraphs} that if $\Cay{V,S}$ is $G$-symmetric with diameter two then
	\begin{equation} 
	|V| \leq |S|^2 + 1 \leq |G_0|^2 + 1.
	\end{equation}
This fact will be frequently used in obtaining bounds for certain parameters.

In our situation $p^d = q^n$ and $G_0$ preserves on $V$ the structure of an $\F{q}$-space; we therefore regard $V$ as $V = \F{q}^n$, and $G_0$ as a subgroup of $\GamL{n,q}$.

\subsection{Semilinear transformations and semisimilarities} \label{subsec:semilin}

Throughout this subsection assume that $q$ is an arbitrary prime power, $V$ is a vector space with finite dimension $n$ over $\F{q}$, and $\mathcal{B} := \{v_1, \ldots, v_n\}$ is a fixed $\F{q}$-basis of $V$.

The \emph{general semilinear group} $\GamL{n,q}$ consists of all invertible maps $h : V \rightarrow V$ for which there exists $\alpha(h) \in \F{q}$, which depends only on $h$, satisfying
	\begin{equation} \label{eq:semilintransf}
	(\lambda u + v)^h = \lambda^{\alpha(h)} u^h + v^h \quad \text{for all} \ \lambda \in \F{q} \ \text{and} \ u,v \in V.
	\end{equation}
The group $\GamL{n,q}$ is isomorphic to a semidirect product $\GL{n,q} \rtimes \Aut{\F{q}}$ with the following action on $V$:
	\begin{align} \label{eq:GamL-action}
	\left( \sum_{i=1}^n{\lambda_i v_i} \right)^{g\alpha} := \sum_{i=1}^n{\lambda_i^\alpha v_i^g} \quad &\text{for all} \ g \in \GL{n,q}, \ \alpha \in \Aut{\F{q}}, \ \text{and} \\
	&\lambda_1, \ldots, \lambda_n \in \F{q}. \notag
	\end{align}
If $V$ is endowed with a left-linear or quadratic form $\phi$, then the elements of $\GamL{n,q}$ that preserve $\phi$ up to a nonzero scalar factor or an $\F{q}$-automorphism are called \emph{semisimilarities} of $\phi$. That is, $h$ is a semisimilarity of $\phi$ if and only if for some $\lambda(h) \in \F{q}^\#$ and some $\alpha'(h) \in \Aut{\F{q}}$, both of which depend only on $h$,
		\[ \phi(u^h,v^h) = \lambda(h) \phi(u,v)^{\alpha'(h)} \quad \text{for all} \ u,v \in V \]
if $\phi$ is left-linear, and
	\[ \phi(v^h) = \lambda(h) \phi(v)^{\alpha'(h)} \quad \text{for all} \ v \in V \]
if $\phi$ is quadratic. It can be shown that $\alpha'(h)$ is the element $\alpha(h)$ in (\ref{eq:semilintransf}). The set of all semisimilarities of $\phi$ is a subgroup of $\GamL{n,q}$ and is denoted by $\GamI{n,q}$, where $\rm I$ is $\rm Sp, U, O, O^+$, or $\rm O^-$, if $\phi$ is symplectic (i.e., nondegenerate alternating bilinear), unitary (i.e., nondegenerate conjugate-symmetric sesquilinear), quadratic in odd dimension, quadratic of plus type, or quadratic of minus type, respectively.

The map $\alpha : \GamI{n,q} \rightarrow \Aut{\F{q}}$ defined by $h \mapsto \alpha(h)$ is a group homomorphism whose kernel ${\rm GI}(n,q)$ consists of all $g \in \GL{n,q}$ that preserve $\phi$ up to a nonzero scalar factor. The elements of ${\rm GI}(n,q)$ are called \emph{similarities} of $\phi$. Likewise, the map $g \mapsto \lambda(g)$ for any $g \in \GamI{n,q}$ defines a homomorphism $\lambda$ from ${\rm GI}(n,q)$ to the multiplicative group $\F{q}^\#$. The kernel ${\rm I}(n,q)$ of $\lambda$ consists of all $\phi$-preserving elements in $\GL{n,q}$, which are called the \emph{isometries} of $\phi$. It should be emphasised that our notation for the similarity and isometry groups is non-standard, but follows for example \cite{FNP}: the symbol ${\rm GI}(n,q)$ is sometimes used to denote the isometry group, whereas in the present paper this refers to the similarity group.

In Subsection \ref{sec:C8} we determine the orbits in $V^\#$ of the groups $\GamI{n,q}$. The following result, which gives the orbits of the isometry groups ${\rm I}(n,q)$, is useful:

\begin{theorem} \cite[Propositions 3.11, 5.12, 6.8 and 7.10]{Wan} \label{thm:isometry-orbits}
Let $V = \F{q}^n$ and $\phi$ a symplectic, unitary, or nondegenerate quadratic form on $V$. Then the orbits in $V^\#$ of the isometry group of $(V,\phi)$ are the sets $S_\lambda$ for each $\lambda \in \im{\overline{\phi}}$, where
	\begin{equation} \label{eq:S_lambda}
	S_\lambda := \{ v \in V^\# \ | \ \overline{\phi}(v) = \lambda \}
	\end{equation}
and
	\begin{equation} \label{eq:phi-bar}
	\overline{\phi}(v) = \begin{cases}
											 \phi(v,v) \ &\text{if $\phi$ is symplectic or unitary}; \\
											 \phi(v) &\text{if $\phi$ is quadratic}.
											 \end{cases}
	\end{equation}
\end{theorem}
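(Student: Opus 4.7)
The plan is to prove the theorem in two steps: first, that each set $S_\lambda$ is a union of orbits of the isometry group ${\rm I}(n,q)$; and second, that each $S_\lambda$ is in fact a single orbit. The first step is immediate from the definitions: if $h$ is an isometry of $\phi$, then $\overline{\phi}(v^h) = \overline{\phi}(v)$ for all $v \in V$, since in the symplectic and unitary cases $\phi(v^h,v^h) = \phi(v,v)$, and in the quadratic case $\phi(v^h) = \phi(v)$. Hence each $S_\lambda$ is ${\rm I}(n,q)$-invariant, and because the sets $\{S_\lambda : \lambda \in \im{\overline{\phi}}\}$ partition $V^\#$, every orbit is contained in some $S_\lambda$.

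For the second step I would appeal to Witt's extension theorem, which asserts that any $\phi$-isometry between two subspaces of $V$ extends to a $\phi$-isometry of $V$. Given $u, v \in S_\lambda$, define the linear map $\tau : \langle u \rangle \to \langle v \rangle$ by $u \mapsto v$. In the quadratic case, for any scalar $\alpha$ one checks $\phi(\alpha u) = \alpha^2 \phi(u) = \alpha^2 \lambda = \phi(\alpha v)$, so $\tau$ preserves $\phi$ on $\langle u \rangle$ and is therefore an isometry of $1$-dimensional quadratic subspaces; Witt's extension then yields $h \in {\rm I}(n,q)$ with $u^h = v$. The unitary case is analogous, using $\phi(\alpha u, \alpha u) = \alpha \alpha^\sigma \phi(u,u)$, where $\sigma$ is the involutory field automorphism. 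The symplectic case is degenerate, since $\phi$ alternating forces $\overline{\phi} \equiv 0$, so the only candidate set is $S_0 = V^\#$; here $\tau$ is automatically an isometry because $\phi$ vanishes on every $1$-dimensional subspace, and Witt's extension again gives transitivity of $\Sp{n,q}$ on $V^\#$.

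The main obstacle is ensuring that Witt's extension is genuinely applicable in each setting. Nondegeneracy of $\phi$ is given, but in the quadratic case over a field of characteristic $2$ one has to be careful because the associated polar bilinear form can be degenerate even when the quadratic form is not, so the standard form of Witt's theorem for quadratic spaces (rather than for bilinear spaces) must be used. Once this technicality is addressed, the three cases merge into a single uniform argument. An alternative approach, presumably the one followed in the cited propositions of \cite{Wan}, is to construct explicit isometries — transvections in the symplectic and unitary cases, reflections (or Siegel transformations in characteristic $2$) in the orthogonal case — that carry $u$ directly to $v$; this avoids invoking Witt's theorem but requires a separate calculation tailored to each type of form.
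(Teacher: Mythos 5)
Your argument is correct, but note that the paper does not prove this statement at all: it is imported verbatim from Wan's book (Propositions 3.11, 5.12, 6.8 and 7.10 of \cite{Wan}), so there is no internal proof to compare against. Your two-step scheme --- each $S_\lambda$ is ${\rm I}(n,q)$-invariant because isometries preserve $\overline{\phi}$, and each nonempty $S_\lambda$ is a single orbit because the linear map $\alpha u \mapsto \alpha v$ is an isometry of one-dimensional formed subspaces which extends to $V$ by Witt's extension theorem --- is the standard uniform proof, and your verification that the line map preserves the form ($\alpha^2\lambda$ in the quadratic case, $\alpha^{q_0+1}\lambda$ in the unitary case, identically $0$ in the symplectic case, where indeed $\im{\overline{\phi}} = \{0\}$ and $S_0 = V^\#$, recovering the transitivity of $\Sp{n,q}$ noted after the theorem) is accurate. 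You also correctly identify the only delicate point, namely characteristic $2$ quadratic forms, where the polar form is alternating and one must invoke the quadratic-form (Witt--Arf) version of the extension theorem for nonsingular quadratic spaces; this suffices for the situations the paper actually uses ($q$ odd when $n$ is odd, nondegenerate even-dimensional forms when $q$ is even). By contrast, the source \cite{Wan} establishes the orbit descriptions type by type with explicit computations, which is what your alternative remark about transvections, reflections and Siegel transformations corresponds to; the Witt-based route is shorter and treats the three geometries uniformly, at the cost of quoting a stronger theorem. The only cosmetic caveat, which the paper itself acknowledges later (``provided that $S_0 \neq \varnothing$''), is that for anisotropic forms the set $S_0$ is empty and so should be excluded from the list of orbits; your proof, which only shows each \emph{nonempty} $S_\lambda$ is an orbit, is consistent with this.
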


Observe that if $\phi$ is symplectic then $\phi(v,v) = 0$ for all nonzero vectors $v$, so it follows from Theorem \ref{thm:isometry-orbits} that $\Sp{n,q}$ is transitive on $V^\#$.

\subsubsection{Some geometry}

Let $f$ be a left-linear form on $V$. A nonzero vector $v$ is called \emph{isotropic} if $f(v,v) = 0$; otherwise, it is \emph{anisotropic}. If $f$ is symplectic or unitary, then an isotropic vector is also called \emph{singular}. If $f$ is symmetric bilinear and $Q$ is a quadratic form which polarises to $f$ (that is, $f(u,v) = Q(u+v) - Q(u) - Q(v)$), then a singular vector is a nonzero vector $v$ with $Q(v) = 0$. Hence, in general, all isotropic vectors are singular and vice versa, unless $V$ is orthogonal and $q$ is even; in this case all nonzero vectors are isotropic but not all are singular. A subspace $U$ of $V$ is \emph{totally isotropic} if $f \vert_U \equiv 0$, and \emph{totally singular} if all its nonzero vectors are singular. On the other hand, a subspace $U$ is \emph{anisotropic} if all of its nonzero vectors are anisotropic.

For any subspace $U$ of $V$ we define the subspace
	\[ U^\perp := \left\{ v \in V \ \vert \ f(u,v) = 0 \ \forall \ u \in U \right\} \]
and we write $V = U \perp W$ if $V = U \oplus W$ and $W \leq U^\perp$. Clearly a nonzero vector $v$ is isotropic if and only if $v \in \langle v \rangle^\perp$, and the subspace $U$ is totally isotropic if and only if $U \leq U^\perp$. A symplectic or unitary form $f$, or a quadratic form with associated bilinear form $f$, is \emph{nondegenerate} (or \emph{nonsingular}) if the radical $V^\perp$ of $f$ is the zero subspace.

A \emph{hyperbolic pair} in $V$ is a pair $\{x,y\}$ of singular vectors such that $f(x,y) = 1$. The space $V$ can be decomposed into an orthogonal direct sum of an anisotropic subspace and subspaces spanned by hyperbolic pairs, as stated in the following fundamental result on the geometry of formed spaces.

\begin{theorem} \cite[Propositions 2.3.2, 2.4.1, 2.5.3]{KleidLieb} \label{thm:Wittindex}
Let $V = \F{q}^n$, and let $f$ be a left-linear form on $V$ which is symplectic, unitary, or a symmetric bilinear form associated with a nondegenerate quadratic form $Q$. Then
	\[ V = \langle x_1,y_1 \rangle \perp \ldots \perp \langle x_m,y_m \rangle \perp U \]
where $\{ x_i,y_i \}$ is a hyperbolic pair for each $i$ and $U$ is an anisotropic subspace. Moreover:
	\begin{enumerate}
	\item If $f$ is symplectic then $U = 0$. Hence $n$ is even and, up to equivalence, there is a unique symplectic geometry in dimension $n$ over $\F{q}$.
	\item If $f$ is unitary then $U = 0$ if $n$ is even and $\dim{U} = 1$ if $n$ is odd. Hence up to equivalence, there is a unique unitary geometry in dimension $n$ over $\F{q}$.
	\item If $f$ is symmetric bilinear with quadratic form $Q$ and $n$ is odd, then $q$ is odd, $\dim{U} = 1$, and there are two isometry classes of quadratic forms in dimension $n$ over $\F{q}$, one a non-square multiple of the other. Hence all orthogonal geometries in dimension $n$ over $\F{q}$ are similar.
	\item If $f$ is symmetric bilinear with quadratic form $Q$ and $n$ is even, then $U = 0$ or $\dim{U} = 2$. For each $n$ there are exactly two isometry classes of orthogonal geometries over $\F{q}$, which are distinguished by $\dim{U}$.
	\end{enumerate}
\end{theorem}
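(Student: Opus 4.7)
The plan is to establish the Witt-style decomposition by induction on $n = \dim{V}$, and then to analyse the anisotropic kernel $U$ case by case. For the induction, if $V$ contains no singular vector then $V$ is itself anisotropic and we take $m = 0$ and $U = V$; otherwise, pick a singular $x \in V^\#$. Nondegeneracy of $f$ (or of the polar form, in the orthogonal case) yields some $z \in V$ with $f(x,z) \ne 0$. After rescaling $z$, and in the orthogonal case replacing $z$ by $y := z + \mu x$ for a suitable $\mu \in \F{q}$ to arrange singularity, we obtain a singular $y$ with $f(x,y) = 1$. Then $H := \langle x, y \rangle$ is a nondegenerate hyperbolic plane, so $V = H \perp H^\perp$ with $f|_{H^\perp}$ still nondegenerate, and induction applied to $H^\perp$ produces the required decomposition.

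The four subcases then reduce to bounding $\dim{U}$ for each type of form. In the symplectic case every vector is isotropic, so the procedure above iterates inside any nondegenerate symplectic subspace; hence $U = 0$ and $n$ is even. For the unitary case, a direct computation in dimension two with form $a\bar{a} + \lambda b\bar{b}$ shows that surjectivity of the norm map $\F{q^2}^\# \to \F{q}^\#$ yields a nonzero isotropic vector, so $\dim{U} \leq 1$; parity of $n$ then determines $\dim{U}$. For the orthogonal case, an analogous argument using sums of squares in $\F{q}$ (or, in characteristic $2$, the surjectivity of an appropriate trace or Artin--Schreier map) shows that an anisotropic quadratic space over $\F{q}$ has dimension at most $2$; a parity check then gives $\dim{U} = 1$ for $n$ odd (which forces $q$ odd, since in even characteristic the polar form of an odd-dimensional nondegenerate quadratic form would be alternating and hence degenerate) and $\dim{U} \in \{0, 2\}$ for $n$ even.

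Uniqueness up to equivalence or similarity then follows from Witt's extension theorem, together with the explicit classification of $1$- and $2$-dimensional anisotropic forms over $\F{q}$: in odd orthogonal dimension the two isometry classes are represented by $Q$ and $\eta Q$ for a fixed nonsquare $\eta \in \F{q}^\#$, and rescaling exhibits them as similar; in even orthogonal dimension the two classes are distinguished precisely by whether $\dim{U}$ is $0$ (plus type) or $2$ (minus type). The main obstacle I anticipate is the orthogonal case in characteristic $2$, where isotropy with respect to the polar form does not imply singularity for $Q$, so the hyperbolic pair must be produced from $Q$ directly; this is handled by the additive correction $y := z + \mu x$, which reduces singularity of $y$ to the linear equation $Q(z) + \mu f(x,z) = 0$ in $\mu$, solvable because $f(x,z) \ne 0$.
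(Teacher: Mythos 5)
This theorem is quoted from Kleidman--Liebeck \cite[Propositions 2.3.2, 2.4.1, 2.5.3]{KleidLieb} and the paper supplies no proof of its own, so there is no internal argument to compare against; your proof is the standard Witt-decomposition route and is essentially correct. One small repair: the additive correction $y := z + \mu x$ is needed in the unitary case too, not only the orthogonal one, since the vector $z$ with $f(x,z) \neq 0$ need not be isotropic --- after normalising $f(x,z)=1$ one solves $f(z,z) + {\rm Tr}(\mu) = 0$ using surjectivity of the trace map onto $\F{q_0}$ (where $q_0 = \sqrt{q}$), exactly parallel to the characteristic-$2$ orthogonal fix you describe. The remaining ingredients (norm-map and sum-of-squares bounds giving $\dim{U} \leq 1$ or $2$, the observation that an alternating polar form in odd dimension forces $q$ odd, and Witt extension/cancellation for the uniqueness and the distinctness of the plus and minus types) are all as in the cited source.
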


In Theorem \ref{thm:Wittindex} (4), the quadratic form $Q$ and the corresponding geometry is said to be of \emph{plus type} if $U = 0$, and of \emph{minus type} if $\dim{U} = 2$.

\subsubsection{Tensor products}

Some of the subgroups listed in Aschbacher's classification arise as tensor products of classical groups. In order to describe the group action we define first the tensor product of forms. If $V = U \otimes W$, and if $\phi_U$ and $\phi_W$ are both bilinear or both unitary forms on $U$ and $W$, respectively, then the form $\phi_U \otimes \phi_W$ on $V$ is defined by
	\[ \left( \phi_U \otimes \phi_W \right) \left( u \otimes w, u' \otimes w' \right) := \phi_U(u,u') \phi_W(w,w') \]
for all $u \otimes w$ and $u' \otimes w'$ in a tensor product basis of $V$, extended bilinearly if $\phi_U$ and $\phi_W$ are bilinear, and sesquilinearly if $\phi_U$ and $\phi_W$ are sesquilinear. If $\phi_U$ and $\phi_W$ are both bilinear then so is $\phi_U \otimes \phi_W$; moreover, $\phi_U \otimes \phi_W$ is alternating if at least one of $\phi_U$ and $\phi_W$ is alternating, and $\phi_U \otimes \phi_W$ is symmetric if both $\phi_U$ and $\phi_W$ are symmetric. If $\phi_U$ and $\phi_W$ are both unitary then $\phi_U \otimes \phi_W$ is unitary. The tensor product $I(U,\phi_U) \otimes I(W,\phi_W)$ acts on $V$ with the usual tensor product action --- that is, for any $g \in I(U,\phi_U)$, $h \in I(W,\phi_W)$, $u \in U$ and $w \in W$,
	\[ (u \otimes w)^{(g,h)} := u^g \otimes w^h. \]
The types of forms $\phi_U \otimes \phi_W$ that arise according to the various possibilities for $\phi_U$ and $\phi_W$, which are given in terms of the possible inclusions $I(U,\phi_U) \otimes I(W,\phi_W) \leq I(V,\phi_U \otimes \phi_W)$, are summarised in Table \ref{table:tensor-classical}.

\begin{table}
\begin{center}
\begin{tabular}{lll}
\hline
\multicolumn{1}{c}{${\rm I}(U,\phi_U)$} & \multicolumn{1}{c}{${\rm I}(W,\phi_W)$} & \multicolumn{1}{c}{${\rm I}(U \otimes W,\phi_U \otimes \phi_W)$} \\
\hline\hline
\\[-9pt]
$\rm Sp$ & $\rm O^\epsilon$ & $\begin{cases} \rm Sp \ &\text{if the characteristic is odd;} \\ \rm O^+ &\text{else} \end{cases}$ \\
\\[-9pt]
$\rm Sp$ & $\rm Sp$ & $\rm O^+$ \\
\\[-9pt]
$\rm O^{\epsilon_1}$ & $\rm O^{\epsilon_2}$  & $\begin{cases} \rm O^+ \ &\text{if $\epsilon_i = +$ for some $i$, or $\epsilon_i = -$ for both $i$;} \\ \rm O &\text{if $\dim{U}$ and $\dim{W}$ are odd;} \\ \rm O^- &\text{else} \end{cases}$ \\
\\[-9pt]
$\rm U$ & $\rm U$ & $\rm U$ \\
\hline
\end{tabular} \bigskip \caption{Tensor products of classical groups} \label{table:tensor-classical}
\end{center}
\end{table}

The tensor product of an arbitrary number of formed spaces can be defined similarly: If $V = U_1 \otimes \cdots \otimes U_t$ and $\phi_i$ is a nondegenerate form on $U_i$ for each $i$, and either all $\phi_i$ are bilinear or all are sesquilinear, the form $\phi_1 \otimes \cdots \otimes \phi_t$ is given by
	\[ \left( \otimes_{i=1}^t \phi_i \right) \left( \otimes_{i=1}^t u_i, \otimes_{i=1}^t w_i \right) = \prod_{i=1}^t \phi(u_i, w_i) \]
as $\otimes_{i=1}^t u_i$ and $\otimes_{i=1}^t w_i$ vary over a tensor product basis of $V$, extended bilinearly if the $\phi$ are bilinear, and sesquilinearly if they are sesquilinear. Then $\otimes_{i=1}^t \phi_i$ is a nondegenerate bilinear (respectively, sesquilinear) form on $V$. If the spaces $(U_i,\phi_i)$ are all isometric, then we can extend the results of Table \ref{table:tensor-classical} to the following (see \cite{KleidLieb,Wilson}):
	\begin{align*}
	\otimes_{i=1}^t{\Sp{m,q}} &< \begin{cases} \Sp{m^t,q} \ &\text{if $qt$ odd}; \\ \Oplus{m^t,q} &\text{if $qt$ is even} \end{cases} \\
	\otimes_{i=1}^t{{\rm O}^\epsilon(m,q)} &< \begin{cases} \O{m^t,q} \ &\text{if $qm$ is odd}; \\ \Ominus{m^t,q} &\text{if $\epsilon = -$ and $t$ is odd}; \\ \Oplus{m^t,q} &\text{else} \end{cases} \\
	\otimes_{i=1}^t{\U{m,q}} &< \U{m^t,q}
	\end{align*}

\subsection{Aschbacher's classification} \label{subsec:Asch}

The irreducible subgroups of semisimilarity and semilinear groups are classified by Aschbacher's Theorem \cite{Asch}. In \cite{KleidLieb}, Aschbacher's Theorem is used to identify those irreducible subgroups which are maximal. We present below the versions that correspond to $\GamL{n,q}$ and to $\GamSp{n,q}$. Recall that $G_0$ does not contain either of the transitive groups $\SL{n,q}$ or $\Sp{n,q}$.

\begin{theorem} \label{thm:Asch-GL}
If $M$ is a maximal irreducible subgroup of $\GamL{n,q}$ that does not contain $\SL{n,q}$, then $M$ is one of the following groups:
	\begin{enumerate}
	\item[$(\C_2)$] $\left(\GL{m,q} \wr \Sym{t}\right) \rtimes \Aut{\F{q}}$, where $mt = n$;
	\item[$(\C_3)$] $\GamL{m,q^r}$, where $r$ is prime and $mr = n$; 
	\item[$(\C_4)$] $(\GL{k,q} \otimes \GL{m,q}) \rtimes \Aut{\F{q}}$, where $km = n$ and $k \neq m$, and the action of $\tau$ is defined with respect to a tensor product basis of $\F{q}^k \otimes \F{q}^m$;
	\item[$(\C_5)$] $\left(\GL{n,q^{1/r}} \circ Z_{q-1}\right) \rtimes \Aut{\F{q}}$, where $n \geq 2$, $q$ is an $r$th power and $r$ is prime;
	\item[$(\C_6)$] $\left((Z_{q-1} \circ R).T\right) \rtimes \Aut{\F{q}}$, where $n = r^t$ with $r$ prime, $q$ is the smallest power of $p$ such that $q \equiv 1\pmod{r}$, and $R$ and $T$ are as given in Table {\rm\ref{table:C6groups}} with $R$ of type 1 or 2;
	\item[$(\C_7)$] $\left(\GL{m,q} \wr_\otimes \Sym{t}\right) \rtimes \Aut{\F{q}}$, where $m^t = n$, $t \geq 2$, and the action of $\tau$ is defined with respect to a tensor product basis of $\otimes_{i=1}^t{\F{q}^m}$;
	\item[$(\C_8)$] $\GamO{n,q}$ or ${\rm \Gamma O}^\pm(n,q)$ with $q$ odd, $\GamSp{n,q}$, or $\GamU{n,q}$;
	\item[$(\C_9)$] the preimage of an almost simple group $H \leq \PGamL{n,q}$ satisfying the following conditions:
		\begin{enumerate}
		\item $T \leq H \leq \Aut{T}$ for some nonabelian simple group $T$ (i.e., $H$ is almost simple).
		\item The preimage of $T$ in $\GL{n,q}$ is absolutely irreducible and cannot be realised over a proper subfield of $\F{q}$.
		\end{enumerate}
	\end{enumerate}
\end{theorem}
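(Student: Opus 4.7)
The plan is to derive this classification as a direct consequence of Aschbacher's reduction theorem from \cite{Asch} combined with the explicit maximality analysis carried out by Kleidman and Liebeck in \cite{KleidLieb}. The first step is to apply Aschbacher's theorem to $\GL{n,q}$: every subgroup that does not contain $\SL{n,q}$ is contained in a member of one of nine geometrically defined classes $\C_1,\dots,\C_9$. Members of $\C_1$ stabilise a proper nonzero subspace and are therefore reducible, so the irreducibility hypothesis on $M$ rules them out immediately. For each of the remaining eight classes, \cite[Chapter 4]{KleidLieb} identifies the precise maximal members within $\GL{n,q}$ together with their isomorphism types; these are exactly the groups listed in items $(\C_2)$--$(\C_9)$ before the $\Aut{\F{q}}$ factor is adjoined.

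The second step is to lift the classification from $\GL{n,q}$ to $\GamL{n,q} = \GL{n,q} \rtimes \Aut{\F{q}}$. The key observation is that every geometric configuration defining a class $\C_i$ for $i \in \{2,\dots,8\}$ (direct sum decomposition, extension field, tensor decomposition, subfield structure, symplectic-type $r$-group, classical form) can be chosen, with respect to the fixed basis $\mathcal{B}$, to be stable under the natural $\Aut{\F{q}}$-action described by \eqref{eq:GamL-action}. Hence the full stabiliser inside $\GamL{n,q}$ is the semidirect product of the $\GL{n,q}$-stabiliser with $\Aut{\F{q}}$. Conversely, a maximal irreducible subgroup $M \leq \GamL{n,q}$ not containing $\SL{n,q}$ meets $\GL{n,q}$ in an $\Aut{\F{q}}$-invariant irreducible subgroup, and maximality of $M$ in $\GamL{n,q}$ forces $M \cap \GL{n,q}$ to be a maximal $\Aut{\F{q}}$-invariant member of one of the classes above. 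The entry $(\C_9)$ arises precisely when $M \cap \GL{n,q}$ satisfies the almost simple, absolutely irreducible, and non-realisable-over-a-subfield conditions, which is the intrinsic definition of $\C_9$.

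The main obstacle I expect is bookkeeping rather than substance: one must check that each candidate on the list is not properly contained in another listed group, which requires comparing orders and geometric invariants across the classes; in particular one must specify carefully how $\tau \in \Aut{\F{q}}$ acts on a tensor product basis in $\C_4$ and $\C_7$ and on the symplectic-type $r$-group $R$ in $\C_6$ (the two types of $R$ producing the two sub-cases in that class). For $\C_8$, identifying the groups $\GamO{n,q}$, ${\rm \Gamma O}^\pm(n,q)$, $\GamSp{n,q}$, $\GamU{n,q}$ as the full stabilisers of a form up to scalar and field automorphism follows directly from the definition of $\GamI{n,q}$ given in Subsection~\ref{subsec:semilin}. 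Beyond this, no new argument is required: the statement is a transcription of \cite[Main Theorem]{Asch} and the maximality tables of \cite[Chapter 4]{KleidLieb} into the present notation.
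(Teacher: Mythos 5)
Your proposal is correct and matches the paper's treatment: the paper gives no independent proof of Theorem~\ref{thm:Asch-GL}, but simply quotes it as the $\GamL{n,q}$ version of Aschbacher's classification \cite{Asch} together with the maximality analysis of Kleidman--Liebeck \cite{KleidLieb}, which is exactly the reduction you outline. The lifting of the $\GL{n,q}$ class stabilisers to $\GamL{n,q}$ by adjoining $\Aut{\F{q}}$ with respect to a suitably chosen basis is the same routine step the cited sources carry out, so no further argument is needed.
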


In Theorem \ref{thm:Asch-Sp} the symbol $[o]$ denotes a group of order $o$. In case $(\C_2)$ the group $[q-1]$ is generated by the map
	\[ \delta_\mu : x_i \mapsto \mu x_i, \ y_i \mapsto y_i \]
for all $x_i$ and all $y_i$, $i \in \{1, \ldots, n/2\}$, where $\mu$ is a generator of the multiplicative group $\F{q}^\#$ and $\{x_1, \ldots, x_{n/2}, y_1, \ldots, y_{n/2}\}$ is a basis of $\F{q}^n$, satisfying $\phi(x_i,x_j) = \phi(y_i,y_j) = \phi(x_i,y_j) = 0$ whenever $i \neq j$ and $\phi(x_i,y_i) = 1$ for all $i$. Such a basis is called a \emph{symplectic basis}.

\begin{theorem} \label{thm:Asch-Sp}
If $M$ is a maximal irreducible subgroup of $\GamSp{n,q}$, then $M$ is one of the following groups:
	\begin{enumerate}
	\item[$(\C_2)$] $\left( (\Sp{m,q}^t.[q-1].\Sym{t}) \right) \rtimes \Aut{\F{q}}$, where $m = n/t$; or \\
		$\left( \GL{m,q}.[2] \right) \rtimes \Aut{\F{q}}$, where $m = n/2$;
	\item[$(\C_3)$] $\left( \Sp{m,q^r}.[q-1] \right) \rtimes \Aut{\F{q}}$, where $r$ is prime and $m = n/r$; or \\
		$\GamU{m,q^2}$, where $m = n/2$ and $q$ is odd;
	\item[$(\C_4)$] $\left( \GSp{k,q} \times {\rm GO}^\epsilon(m,q) \right) \rtimes \Aut{\F{q}}$, where $q$ is odd, $k \neq m$, $m \geq 3$, and ${\rm GO}^\epsilon$ can be any of $\rm GO$, $\rm GO^+$, or $\rm GO^-$;
	\item[$(\C_5)$] $\left( \GSp{n,q^{1/r}} \circ Z_{q-1} \right) \rtimes \Aut{\F{q}}$
	\item[$(\C_6)$] $\left( Z_{q-1} \circ R \right).\Ominus{2t,2}$, where $q \geq 3$ and is prime, and $R$ is of type 4 in Table {\rm\ref{table:C6groups}};
	\item[$(\C_7)$] $\left( \GSp{m,q} \wr_\otimes \Sym{t} \right) \rtimes \Aut{\F{q}}$, where $qt$ is odd;
	\item[$(\C_8)$] ${\rm \Gamma O}^\pm (n,q)$, where $q$ is even;
	\item[$(\C_9)$] the preimage of an almost simple group $H \leq \PGamL{n,q}$ satisfying the following conditions:
		\begin{enumerate}
		\item $T \leq H \leq \Aut{T}$ for some nonabelian simple group $T$ (i.e., $H$ is almost simple).
		\item The preimage of $T$ in $\GL{n,q}$ is symplectic, absolutely irreducible, and cannot be realised over a proper subfield of $\F{q}$.
		\end{enumerate}
	\end{enumerate}
\end{theorem}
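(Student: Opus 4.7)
The plan is to derive this classification by specialising Aschbacher's Theorem \cite{Asch} to the symplectic semisimilarity group, following the detailed treatment in \cite{KleidLieb}. Aschbacher's general statement asserts that every maximal subgroup of a finite classical group either stabilises one of the geometric objects defining the classes $\C_1$--$\C_8$, or is (the normaliser of) an almost simple absolutely irreducible subgroup (class $\C_9$). Since $M$ is irreducible, stabilisers of proper nonzero subspaces (class $\C_1$) are excluded, so it suffices to go class by class through $\C_2,\ldots,\C_9$ and read off the symplectic specialisation of each.

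For each geometric class $\C_i$ with $2\le i\le 8$ the main step is to identify the stabiliser in $\GamSp{n,q}$ of the relevant structure, using the constraint that any such stabiliser must simultaneously preserve the structure and the symplectic form up to scalar and field automorphism. In $\C_2$ the decomposition $V = V_1 \oplus \cdots \oplus V_t$ is either orthogonal with respect to the symplectic form, forcing each $V_i$ to be nondegenerate symplectic of dimension $m = n/t$ (giving the first listed group), or splits as a pair of totally isotropic halves paired by the form (giving $\GL{m,q}.[2]$). In $\C_3$ the extension field structure is compatible either with a symplectic form over $\F{q^r}$, or, when $r=2$ and $q$ is odd, with a unitary form yielding $\GamU{m,q^2}$. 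In $\C_4$, Table \ref{table:tensor-classical} shows that a tensor decomposition $V = U \otimes W$ supports a symplectic form only in the configuration ${\rm Sp}\otimes{\rm O}^\epsilon$ with $q$ odd. In $\C_5$ the subfield subgroup is $\GSp{n,q^{1/r}}\circ Z_{q-1}$, obtained by intersecting $\GamSp{n,q}$ with the corresponding $\C_5$-subgroup of $\GamL{n,q}$. In $\C_6$ only normalisers of symplectic-type $r$-groups of type 4 in Table \ref{table:C6groups} preserve a symplectic form, producing the quoted structure with $\Ominus{2t,2}$ quotient. In $\C_7$, the tensor rules of Subsection \ref{subsec:semilin} place $\otimes_{i=1}^{t}\Sp{m,q}$ inside $\Sp{m^t,q}$ exactly when $qt$ is odd. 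In $\C_8$, for $q$ even we have $\Oplus{n,q},\Ominus{n,q}\le\Sp{n,q}$, and their semisimilarity extensions provide the only $\C_8$-case.

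The $\C_9$ conditions are inherited from Aschbacher's statement with the additional requirement that the preimage of $T$ in $\GL{n,q}$ preserve a symplectic form. The main obstacle is not mathematical depth but careful bookkeeping: verifying the precise extensions (for instance the $[q-1]$ factors in the two $\C_2$ cases, and the exact symplectic-type extraspecial quotient in $\C_6$), the small-dimensional exclusions (such as $m\ge 3$ in $\C_4$ and $t\ge 2$ implicit in $\C_7$), and the parity conditions on $q$ and $t$ in $\C_4$, $\C_6$ and $\C_7$, all of which must be matched against the tables in \cite{KleidLieb}.
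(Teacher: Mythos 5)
Your proposal takes the same route as the paper, which states this theorem without proof as the specialisation of Aschbacher's Theorem \cite{Asch} to $\GamSp{n,q}$ as worked out in the tables of \cite{KleidLieb}; your class-by-class reading of the symplectic case (orthogonal versus totally isotropic decompositions in $\C_2$, the ${\rm Sp}\otimes{\rm O}^\epsilon$ configuration in $\C_4$, type-4 groups in $\C_6$, the parity condition $qt$ odd in $\C_7$, and $\Oplus{n,q},\Ominus{n,q}\le\Sp{n,q}$ for $q$ even in $\C_8$) is the standard derivation and is consistent with the statement. Since the remaining work is, as you say, bookkeeping against the Kleidman--Liebeck tables, nothing further is needed.
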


\begin{table}[ht]
\renewcommand{\arraystretch}{1.25} \small
\begin{center}
\begin{tabular}{lcll}
\hline  & $r$ & \multicolumn{1}{c}{$R$} & \multicolumn{1}{c}{$T$} \\
\hline\hline
Type 1 & odd & $\underbrace{R_0 \circ \cdots \circ R_0}_t$, $R_0 :=r_+^{1+2}$ & $\Sp{2t,r}$ \\
Type 2 & $2$ & $Z_4 \circ \underbrace{Q_8 \circ \cdots \circ Q_8}_{t}$ & $\Sp{2t,2}$ \\
Type 4 & $2$ & $\underbrace{D_8 \circ \cdots \circ D_8}_{t-1} \circ Q_8$ & $\Ominus{2t,2}$ \\
\hline
\end{tabular} \bigskip \caption{$\C_6$-subgroups} \label{table:C6groups}
\end{center}
\end{table}
\section{Symmetric diameter two graphs from maximal subgroups of $\GamL{n,q}$ and $\GamSp{n,q}$}

In this section we prove Theorem \ref{mainthm:HA}. In view of the observations in Section \ref{sec:intro}, assume that the following hypothesis holds:

\begin{hypothesis} \label{hyp:HA}
Let $V = \F{p}^d$ with $p$ prime and $d \geq 2$, which is viewed as $\F{q}^n$ with $q = p^{d/n}$ for some divisor $n$ of $d$ (possibly $d/n$ composite or $n = d$). Let $H$ be one of the subgroups below of $\GL{d,p}$:
	\begin{enumerate}
	\item $H = \GamL{n,q} = \GL{n,q} \rtimes \langle \tau \rangle$, the general semilinear group on $V$, or
	\item $H = \GamSp{n,q} = \GSp{n,q} \rtimes \langle \tau \rangle$, the group of symplectic semisimilarities of a symplectic form on $V$,
	\end{enumerate}
Let $\tau$ denote the Frobenius automorphism of $\F{q}$ and $\mathcal{B}$ be a fixed $\F{q}$-basis of $V$, with $\tau$ acting on $V$ as in (\ref{eq:GamL-action}) with respect to $\mathcal{B}$ (with $g = 1$ and $\alpha = \tau$); for the case where $H = \GamSp{n,q}$ assume that $\mathcal{B}$ is a symplectic basis of $V$. Define $G = V \rtimes G_0 \leq V \rtimes H < \AGL{d,p}$ and $L = G_0 \cap \GL{n,q}$, where $G_0$ is a maximal $\C_i$-subgroup of $H$ for some $i \in \{2, 4, 5, 6, 7, 8\}$ and $G_0$ does not contain $\Sp{n,q}$ or $\SL{n,q}$.
\end{hypothesis}

We note that the groups considered in \cite{HAcase} are the same as the subgroups $L$, as defined above, of $H = \GamL{n,q}$.

All irreducible subgroups of $\GL{d,p}$ which are maximal with respect to being intransitive on $V^\#$ thus occur as subcases of the groups considered in Hypothesis \ref{hyp:HA} or belong to class $\C_9$. (Indeed, $G_0$ is maximal intransitive if $n = d$ or if $d/n$ is prime.) For each Aschbacher class assume that $G_0 = M$ is of the form given in Theorem \ref{thm:Asch-GL} or \ref{thm:Asch-Sp}.

Since some of the other subgroups of $\GamSp{n,q}$ involve classical groups, we begin with class $\C_8$.

\subsection{Class $\C_8$} \label{sec:C8}

In this case the space $V$ has a form $\phi$, which is symplectic, unitary, or nondegenerate quadratic if $H = \GamL{n,q}$, and is nondegenerate quadratic if $H = \GamSp{n,q}$ with $q$ even. Since the symplectic group is transitive on $V^\#$, we consider only the unitary and orthogonal cases.

Throughout this section we shall use the following notation: for $\theta \in \{\square, \boxtimes, \#\}$ let
	\begin{equation} \label{eq:S_theta}
	S_\theta := \bigcup_{\lambda \in \F{q}^\theta}{S_\lambda}
	\end{equation}
where the $S_\lambda$ are as in (\ref{eq:S_lambda}). If $q$ is a square (as in the unitary case), let $q_0 := \sqrt{q}$ and let $\F{q_0}$ denote the subfield of $\F{q}$ of index 2. Also let ${\rm Tr} : \F{q} \rightarrow \F{q_0}$ denote the trace map, that is, ${\rm Tr}(\alpha) = \alpha + \alpha^{q_0}$ for all $\alpha \in \F{q}$.

We begin by describing the orbits of the similarity groups ${\rm GI}(n,q)$, where $\rm I \in \left\{ U, O,\right.$ $\left. \rm O^+, O^- \right\}$.

\begin{proposition}\label{prop:similarity-orbits}
Let $V = \F{q}^n$, $\phi$ be a unitary or nondegenerate quadratic form on $V$, and $G_0 = {\rm GI}(n,q)$ with $\rm I \in \{U, O, O^+, O^- \}$, according to the type of $\phi$. Let $S_0$ be as in {\rm(\ref{eq:S_lambda})} and $S_\square$, $S_\boxtimes$ and $S_\#$ be as in {\rm(\ref{eq:S_theta})}.
	\begin{enumerate}
	\item If $\phi$ is unitary, then the $G_0$-orbits in $V^\#$ are $S_0$ and $S_\#$.
	\item If $\phi$ is nondegenerate quadratic, then the $G_0$-orbits in $V^\#$ are as follows:
		\begin{enumerate}[(i)]
		\item $S_\#$ if $n = 1$;
		\item $S_0$ and $S_\#$ if $n$ is even;
		\item $S_0$, $S_\square$ and $S_\boxtimes$ if $n$ is odd and $n \geq 3$.
		\end{enumerate}
	\end{enumerate}
\end{proposition}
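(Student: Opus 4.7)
The approach is to use Theorem \ref{thm:isometry-orbits}, which describes the orbits of the isometry subgroup ${\rm I}(n,q) \trianglelefteq G_0$ on $V^\#$ as the nonempty fibres $S_\lambda$ of $\overline{\phi}$, and then to track how the similarity group fuses these fibres via the similarity multiplier homomorphism $\lambda : G_0 \to \F{q}^\#$. The crucial observation is that if $g \in G_0$ has multiplier $\mu := \lambda(g)$, then $\overline{\phi}(v^g) = \mu\,\overline{\phi}(v)$ for every $v \in V$, so $g$ sends $S_\nu$ to $S_{\mu\nu}$. In particular $S_0$ is $G_0$-invariant and, when nonempty, is already a single ${\rm I}(n,q)$-orbit; the remaining $G_0$-orbits on $V^\# \setminus S_0$ are then in bijection with the orbits of $\im{\lambda} \leq \F{q}^\#$ acting on $\im{\overline{\phi}} \setminus \{0\}$ by multiplication.

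With this reduction in place, the proof amounts to determining $\im{\lambda}$ and $\im{\overline{\phi}}$ in each case. In the unitary case the Hermitian identity $\phi(u,v)^{q_0} = \phi(v,u)$ forces both $\overline{\phi}(v) = \phi(v,v) \in \F{q_0}$ and every similarity multiplier to lie in $\F{q_0}^\#$. The scalar maps $v \mapsto cv$ with $c \in \F{q}^\#$ give multipliers equal to the relative norm $c^{q_0+1}$; since this norm $\F{q}^\# \to \F{q_0}^\#$ is surjective, $\im{\lambda} = \F{q_0}^\#$ acts transitively on $\F{q_0}^\#$, yielding the single orbit $S_\#$ off $S_0$.

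In the orthogonal case, applying the similarity relation $g^\top M g = \mu M$ (with $M$ the Gram matrix of the associated bilinear form) and taking determinants gives $\det(g)^2 = \mu^n$. For $n$ odd this forces $\mu$ to be a nonzero square, while scalar matrices $v \mapsto cv$ realise every square, so $\im{\lambda} = \F{q}^\square$; the two square classes of $\F{q}^\#$ are thereby preserved, and Theorem \ref{thm:Wittindex} guarantees that when $n \geq 3$ both classes arise as values of $\overline{\phi}$, giving the two orbits $S_\square$ and $S_\boxtimes$. The case $n = 1$ is degenerate: $\overline{\phi}$ takes values in a single square class so only one of $S_\square$, $S_\boxtimes$ is nonempty, and we recover the single orbit $S_\#$. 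For $n$ even, rescaling one vector of a hyperbolic pair in a Witt decomposition while fixing its partner produces a similarity whose multiplier is any prescribed element of $\F{q}^\#$, so $\im{\lambda} = \F{q}^\#$ acts transitively on $\F{q}^\# = \im{\overline{\phi}} \setminus \{0\}$, giving $S_\#$ as the unique orbit off $S_0$.

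The main technical obstacle is pinning down $\im{\lambda}$ precisely. The determinant argument restricting multipliers to squares in odd orthogonal dimension is clean, but in the even-dimensional orthogonal and unitary settings the converse --- that every element of the claimed target actually arises as a similarity multiplier --- requires exhibiting explicit similarities, and this relies on the Witt decomposition provided by Theorem \ref{thm:Wittindex}.
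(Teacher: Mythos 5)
Your reduction via the multiplier homomorphism is sound and, for the unitary case, is essentially the argument the paper gives: there the authors also take $u,v \in S_\#$, use surjectivity of the norm $\F{q}^\# \to \F{q_0}^\#$ to find $\beta$ with $f(u,u) = f(\beta v,\beta v)$, and then invoke Theorem \ref{thm:isometry-orbits} to conclude $u \in v^{\GU{n,q}}$ --- your formulation just repackages the scalar maps as generators of $\im{\lambda} = \F{q_0}^\#$ acting on $\im{\overline{\phi}}\setminus\{0\}$. For the quadratic case the paper gives no argument at all, citing \cite[Proposition 3.9]{HAcase}; your determinant computation $\det(g)^2 = \mu^n$ (forcing $\mu$ to be a square when $n$ is odd), combined with scalars realising every square, is therefore a genuinely self-contained replacement, and it correctly produces the orbits $S_0, S_\square, S_\boxtimes$ for odd $n \geq 3$ and the single orbit $S_\#$ for $n=1$.

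There is one point where your construction, as written, fails: in the even-dimensional orthogonal case of minus type the Witt decomposition is $V = \langle x_1,y_1\rangle \perp \cdots \perp \langle x_m,y_m\rangle \perp U$ with $\dim{U} = 2$ anisotropic, and the map sending $x_1 \mapsto \mu x_1$ while fixing everything else is \emph{not} a similarity --- it scales $Q$ by $\mu$ on the hyperbolic part but leaves $Q|_U$ unchanged. To show $\im{\lambda} = \F{q}^\#$ for $\rm O^-$ you must additionally exhibit a map on $U$ scaling $Q|_U$ by $\mu$; this exists because the anisotropic plane is isometric to $\F{q^2}$ with its norm form, and multiplication by $c \in \F{q^2}^\#$ scales that form by $N(c)$, with the norm $N$ surjective onto $\F{q}^\#$. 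The same observation handles $n=2$ of minus type, where there is no hyperbolic pair at all and $S_0 = \varnothing$. With that repair the plan is complete.
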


\begin{proof}
Statement 2 is precisely \cite[Proposition 3.9]{HAcase}, so we only need to prove statement 1. Assume that $\phi$ is unitary; hence $q$ is a square and $q_0 = \sqrt{q}$. It follows from Theorem \ref{thm:isometry-orbits} that $S_0$ is a $G_0$-orbit (that is, provided that $S_0 \neq \varnothing$), so we only need to show that $S_\#$ is a $G_0$-orbit. Let $v \in S_\#$; clearly, $v^{G_0} \subseteq S_\#$. For any $u \in S_\#$ set $\alpha := f(u,u)f(v,v)^{-1}$. Then $\alpha \in \F{q_0}^\#$, so $\alpha = \beta^{q_0 + 1}$ for some $\beta \in \F{q}$. Hence $f(u,u) = \beta^{q_0 + 1} f(v,v) = f(\beta v, \beta v)$, so by Theorem \ref{thm:isometry-orbits} we have $u = (\beta v)^g$ for some $g \in \U{n,q}$. Then $u = v^{\beta g}$, where $\beta g \in \GU{n,q}$. Therefore $v^{G_0} = S_\#$, which proves statement 1.
\end{proof}

The orbits of the semisimilarity groups can be easily deduced from Proposition \ref{prop:similarity-orbits}.

\begin{proposition} \label{prop:semisimilarity-orbits}
Let $V = \F{q}^n$, $\phi$ be a unitary or nondegenerate quadratic form on $V$, and $G_0 = {\rm \Gamma I}(n,q)$ with $\rm I \in \{U, O, O^+, O^- \}$, according to the type of $\phi$. Then for all cases, the $G_0$-orbits are exactly the same as the ${\rm GI}(n,q)$-orbits.
\end{proposition}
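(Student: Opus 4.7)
My approach is to derive this proposition from Proposition~\ref{prop:similarity-orbits} by exploiting the fact that ${\rm GI}(n,q)$ is the kernel of the homomorphism $\alpha : \GamI{n,q} \rightarrow \Aut{\F{q}}$, and is therefore normal in $\GamI{n,q}$. Consequently, every $\GamI{n,q}$-orbit on $V^\#$ is a union of ${\rm GI}(n,q)$-orbits, and the task reduces to verifying that no two of the ${\rm GI}(n,q)$-orbits listed in Proposition~\ref{prop:similarity-orbits} are fused by the extra field-automorphism part of $\GamI{n,q}$. The basic tool throughout is the defining identity
\[
\overline{\phi}(v^h) = \lambda(h)\,\overline{\phi}(v)^{\alpha(h)} \qquad (h \in \GamI{n,q}),
\]
together with the composition rule $\lambda(h_1 h_2) = \lambda(h_2)\,\lambda(h_1)^{\alpha(h_2)}$ that it forces on the maps $\lambda$ and $\alpha$.

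The easy cases are the unitary case (1), the subcase $n = 1$ of (2), and the even-dimensional orthogonal case (2)(ii). In each of these, the ${\rm GI}(n,q)$-orbits on $V^\#$ are exactly the vanishing set $S_0$ of $\overline{\phi}$ and its complement $S_\# = V^\# \setminus S_0$. The displayed identity shows that vanishing of $\overline{\phi}$ is preserved under $\GamI{n,q}$, so both $S_0$ and $S_\#$ are $\GamI{n,q}$-invariant and no merging occurs.

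The only substantive case is the odd-dimensional orthogonal case (2)(iii), in which I must show that $S_\square$ and $S_\boxtimes$ remain distinct under $\GamI{n,q}$. For $v \in S_\square$ with $\overline{\phi}(v) = \mu^2$ and $h \in \GamI{n,q}$, the identity gives $\overline{\phi}(v^h) = \lambda(h)\,(\mu^{\alpha(h)})^2$, which lies in $S_\square$ if and only if $\lambda(h)$ is a square in $\F{q}^\#$. The heart of the proof is therefore the claim that $\lambda(h) \in (\F{q}^\#)^2$ for every $h \in \GamI{n,q}$. I would verify this on generators of $\GamI{n,q} = {\rm GI}(n,q).\langle\tau\rangle$: for $g \in {\rm GI}(n,q)$, the Gram-matrix relation $g^T M g = \lambda(g) M$ yields $\det(g)^2 = \lambda(g)^n$, and since $n$ is odd this forces $\lambda(g)$ to be a square; for the Frobenius generator $\tau$, after normalising the basis $\mathcal{B}$ so that the Gram matrix of $\overline{\phi}$ has the form $\eta M_0$ with $M_0 \in \F{p}^{n \times n}$ and $\eta \in \F{q}^\#$, a direct calculation gives $\lambda(\tau) = \eta^{1-p}$, which is a square because $1-p$ is even (as $p$ is odd in this case) and hence $\eta^{1-p} = (\eta^{-(p-1)/2})^2$. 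The composition rule, together with the fact that every element of $\Aut{\F{q}}$ preserves squareness, then shows that $\{h \in \GamI{n,q} : \lambda(h) \in (\F{q}^\#)^2\}$ is a subgroup of $\GamI{n,q}$ containing all generators, so it is all of $\GamI{n,q}$.

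The main obstacle lies in this last case, and specifically in confirming that $\lambda(\tau)$ is a square. This demands a careful choice of basis (so that the Gram matrix of $\overline{\phi}$ becomes a scalar multiple of a matrix over the prime field $\F{p}$) and the parity observation above; the remaining cases, and the passage from generators to all of $\GamI{n,q}$ via the composition rule, are routine.
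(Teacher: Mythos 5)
Your proof is correct and follows essentially the same route as the paper, which disposes of the proposition in one sentence by appealing to Proposition \ref{prop:similarity-orbits} and the fact that semisimilarities preserve $\phi$ up to a scalar and a field automorphism. The only substantive point — that in the odd-dimensional orthogonal case the multiplier $\lambda(h)$ is always a square, so that $S_\square$ and $S_\boxtimes$ are not fused — is left implicit in the paper, and your verification via $\det(g)^2 = \lambda(g)^n$ and $\lambda(\tau) = \eta^{1-p}$ is a correct and welcome filling-in of that detail.
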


\begin{proof}
This follows from Proposition \ref{prop:similarity-orbits} and the fact that the elements of ${\rm \Gamma I}(n,q)$ preserve the form up to an automorphism of $\F{q}$.
\end{proof}

Hence, a direct consequence of Proposition \ref{prop:semisimilarity-orbits} and \cite[Proposition 3.12]{HAcase} is:

\begin{proposition} \label{prop:C8-GOdiam2}
Let $\Gamma$ be a graph and $G \leq \Aut{\Gamma}$ such that $G$ satisfies Hypothesis {\rm\ref{hyp:HA}} with $G_0 = \GamO{n,q}$ or $G_0 = {\rm \Gamma O}^\epsilon(n,q)$ ($\epsilon = \pm$). Then $\Gamma$ is $G$-symmetric with diameter $2$ if and only if $\Gamma \cong \Cay{V,S}$ with $V = \F{q}^n$ and the conditions listed in one of the lines 8--11 of Table {\rm\ref{tab:mainthm-HA}} or lines 4--6 of Table {\rm\ref{tab:mainthm-HA-Sp}} hold.
\end{proposition}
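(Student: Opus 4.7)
The plan is to bootstrap the result from the corresponding statement for similarity groups in \cite{HAcase}, using that the semisimilarity and similarity groups have the same orbits on $V^\#$. First I would invoke Lemma \ref{lem:HAgraphs}: since $G$ is quasiprimitive of type HA on $V$, any $G$-symmetric diameter-two graph on $V$ is a Cayley graph $\Cay{V,S}$ where $S$ is a $G_0$-orbit of nonzero vectors satisfying $-S=S$, $S\subsetneq V$, and $S\cup(S+S)=V$. Conversely, for any such $S$, the Cayley graph $\Cay{V,S}$ is automatically $G$-symmetric of diameter two. Thus the proposition reduces to identifying precisely those $G_0$-orbits $S$ that satisfy these three conditions.

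Next I would apply Proposition \ref{prop:semisimilarity-orbits}, which says that for $G_0=\GamO{n,q}$ or $\GamOplus{n,q}$ or $\GamOminus{n,q}$ the $G_0$-orbits on $V^\#$ coincide with those of the similarity group $\GO{n,q}$, $\GOplus{n,q}$, or $\GOminus{n,q}$ respectively, and these orbits are enumerated in Proposition \ref{prop:similarity-orbits}: namely $S_0$ and $S_\#$ when $n$ is even, and $S_0$, $S_\square$, $S_\boxtimes$ when $n$ is odd. The condition $-S=S$ is then immediate, since for a quadratic form $Q$ we have $Q(-v)=Q(v)$, so each orbit $S_\lambda$ (and hence each union $S_\square$, $S_\boxtimes$, $S_\#$) is closed under negation. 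The condition $S\subsetneq V$ is simply the requirement that $G_0$ be intransitive on $V^\#$, which is part of our hypothesis.

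Finally, the condition $S\cup(S+S)=V$ depends only on the set $S$ and not on which group acts transitively on it; consequently, the classification of orbits $S$ of $\GO{n,q}$, $\GOplus{n,q}$, $\GOminus{n,q}$ satisfying it — established in \cite[Proposition 3.12]{HAcase} — transfers verbatim to the semisimilarity setting. Matching the surviving orbits and the arithmetic side-conditions on $(n,q)$ (which exclude the few small cases where an orbit is too small for $S+S$ to cover $V$, or coincides with $V^\#$) against lines 8--11 of Table \ref{tab:mainthm-HA} and lines 4--6 of Table \ref{tab:mainthm-HA-Sp} completes the proof. The main obstacle is purely bookkeeping: confirming that the rows of the tables exactly reproduce the output of \cite[Proposition 3.12]{HAcase} under the orbit correspondence of Proposition \ref{prop:semisimilarity-orbits}; no new combinatorial or geometric argument is required.
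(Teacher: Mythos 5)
Your proposal is correct and follows essentially the same route as the paper, which presents Proposition \ref{prop:C8-GOdiam2} as a direct consequence of Proposition \ref{prop:semisimilarity-orbits} (semisimilarity orbits equal similarity orbits) combined with \cite[Proposition 3.12]{HAcase}. The reduction via Lemma \ref{lem:HAgraphs} and the observation that the condition $S \cup (S+S) = V$ depends only on the set $S$ are exactly the intended argument.
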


We now consider the unitary case. Note that Theorem \ref{thm:Wittindex} implies that the space $V$ contains a hyperbolic pair, which implies that there is some $v \in V$ which is nonsingular. The following are two easy but useful results which are analogous to Lemma 3.13 and Corollary 3.14 in \cite{HAcase}.

\begin{lemma} \label{lem:Im(phi-bar)}
Let $V = \F{q}^n$, $\phi$ a unitary form on $V$, and $\overline{\phi}$ as in {\rm(\ref{eq:phi-bar})}. Then $\im{\overline{\phi}} = \F{q_0}$, the subfield of index $2$ in $\F{q}$.
\end{lemma}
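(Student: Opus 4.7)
The plan is to prove both inclusions $\im{\overline{\phi}} \subseteq \F{q_0}$ and $\F{q_0} \subseteq \im{\overline{\phi}}$. For the first, I would use the conjugate-symmetry of $\phi$: for any $v \in V$,
\[ \phi(v,v) = \overline{\phi(v,v)} = \phi(v,v)^{q_0}, \]
so $\phi(v,v)$ is fixed by the nontrivial automorphism of $\F{q}/\F{q_0}$ and hence lies in $\F{q_0}$. Since $0 = \overline{\phi}(0) \in \F{q_0}$ trivially, this gives $\im{\overline{\phi}} \subseteq \F{q_0}$.

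For the reverse inclusion, I would invoke Theorem \ref{thm:Wittindex}(2), which guarantees that $V$ contains a hyperbolic pair $\{x,y\}$, i.e.\ singular vectors with $\phi(x,y) = 1$. For arbitrary $\lambda,\mu \in \F{q}$ and $v = \lambda x + \mu y$, a short sesquilinear expansion gives
\[ \phi(v,v) = \lambda \overline{\mu}\, \phi(x,y) + \mu \overline{\lambda}\, \phi(y,x) = \lambda \mu^{q_0} + \mu \lambda^{q_0} = {\rm Tr}(\lambda \mu^{q_0}), \]
using $\phi(y,x) = \overline{\phi(x,y)} = 1$ and $\phi(x,x) = \phi(y,y) = 0$. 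Since the trace map ${\rm Tr}: \F{q} \to \F{q_0}$ is surjective, for any $\alpha \in \F{q_0}$ I can choose $\beta \in \F{q}$ with ${\rm Tr}(\beta) = \alpha$ and then set $\lambda = \beta$, $\mu = 1$ to produce a vector $v$ with $\overline{\phi}(v) = \alpha$. This gives $\F{q_0} \subseteq \im{\overline{\phi}}$ and completes the proof.

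There is no real obstacle here: the argument is elementary once one has the existence of a hyperbolic pair (from Theorem \ref{thm:Wittindex}) and the standard surjectivity of the trace on a finite field extension. The only mild subtlety is ensuring the computation of $\phi(v,v)$ on a hyperbolic pair is done with the correct convention for sesquilinearity (conjugate-linear in the second variable, as used implicitly throughout the paper), which determines the exact form of the trace expression but not the final conclusion.
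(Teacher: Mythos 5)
Your forward inclusion is identical to the paper's; for the surjectivity direction you take a genuinely different route. The paper picks a single nonsingular vector $u$ and observes that $\overline{\phi}(\alpha u) = \alpha^{q_0+1}\phi(u,u) = \eta(\alpha)\,\phi(u,u)$, where $\eta:\F{q}\to\F{q_0}$ is the norm map; since $\eta$ is surjective and $\phi(u,u)\in\F{q_0}^\#$, this already sweeps out all of $\F{q_0}$. You instead expand $\overline{\phi}$ on the span of a hyperbolic pair and reduce to surjectivity of the trace. Both are one-line computations resting on standard facts (norm versus trace), and your trace computation is correct as written. The one substantive difference is an edge case: when $n=1$ the space is anisotropic and contains no hyperbolic pair, so your argument is unavailable there, whereas the norm argument applies to any nonzero (automatically nonsingular) vector. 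The lemma carries no restriction on $n$, and the $n=1$ case is genuinely used downstream: Corollary \ref{cor:Im(phi-bar)} applies the lemma to $\langle v\rangle^\perp$, which is one-dimensional when $n=2$ and $v$ is nonsingular. So you should either supplement your proof with the norm-map argument for $n=1$, or restrict the hyperbolic-pair computation to $n\geq 2$ and treat the one-dimensional case separately.
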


\begin{proof}
Recall that $f(v,v)^{\sqrt{q}} = f(v,v)$ for any $v \in V$, so $\im{\overline{\phi}} \leq \F{q_0}$. By the preceding remarks $V$ contains a nonsingular vector, say $u$. So $f(\alpha u, \alpha u) = \alpha^{\sqrt{q}+1} f(u,u) = \eta(\alpha) f(u,u)$ for any $\alpha \in \F{q}$, where $\eta : \F{q} \rightarrow \F{q_0}$ is the norm map. Since $\eta$ is surjective so is $\overline{\phi}$, and the result follows.
\end{proof}

If $\phi(v,v) \neq 0$, then $\langle v \rangle^\perp$ is nondegenerate and $V = \langle v \rangle \perp \langle v \rangle^\perp$. On the other hand, if $\phi(v,v) = 0$ then $\langle v \rangle \leq \langle v \rangle^\perp$. By the remarks in \cite[pp. 17--18]{KleidLieb}, the form $\phi$ induces a nondegenerate unitary form $\phi_U$ on the space $U := \langle v \rangle^\perp / \langle v \rangle$, defined by $\phi_U(x + \langle v \rangle, y + \langle v \rangle) := \phi(x,y)$ for all $x,y \in \langle v \rangle^\perp$. It follows from \cite[Propositions 2.1.6 and 2.4.1]{KleidLieb} that all maximal totally isotropic subspaces of $V$ have the same dimension, which, in all cases, is at most $n/2$, so in particular $v^\perp$ contains a nonsingular vector whenever $n \geq 3$.

\begin{corollary} \label{cor:Im(phi-bar)}
Let $V = \F{q}^n$, $\phi$ a unitary form on $V$, $\overline{\phi}$ as in {\rm(\ref{eq:phi-bar})}, and $v \in V^\#$. Then $\im{\overline{\phi}|_{\langle v \rangle^\perp}} = \F{q_0}$ if $v$ is nonsingular and $n \geq 2$, or if $v$ is singular and $n \geq 3$.
\end{corollary}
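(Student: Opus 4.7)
\medskip
\noindent\textbf{Proof plan.} The strategy is to reduce both cases to a direct application of Lemma \ref{lem:Im(phi-bar)}, by producing a nonsingular vector $w \in \langle v \rangle^\perp$ and then running the norm-surjectivity argument from that lemma inside the line $\langle w \rangle$. Recall that for any nonsingular $w$ and any $\alpha \in \F{q}$, we have $\phi(\alpha w, \alpha w) = \alpha^{q_0+1}\phi(w,w) = \eta(\alpha)\phi(w,w)$, where $\eta : \F{q} \to \F{q_0}$ is the norm map. Since $\eta$ is surjective onto $\F{q_0}$ and $\phi(w,w)$ is a nonzero element of $\F{q_0}$ (by Lemma \ref{lem:Im(phi-bar)} applied pointwise), the set $\{\phi(\alpha w, \alpha w) : \alpha \in \F{q}\}$ equals $\F{q_0}$. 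As $\alpha w \in \langle v \rangle^\perp$ whenever $w \in \langle v \rangle^\perp$, this will give $\im{\overline{\phi}|_{\langle v \rangle^\perp}} \supseteq \F{q_0}$, and the reverse inclusion is immediate from Lemma \ref{lem:Im(phi-bar)}. So the entire task reduces to finding a suitable $w$.

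\medskip
\noindent
For the first case, assume $v$ is nonsingular and $n \geq 2$. Then $V = \langle v \rangle \perp \langle v \rangle^\perp$, so the restriction of $\phi$ to $\langle v \rangle^\perp$ is a nondegenerate unitary form on a space of dimension $n-1 \geq 1$. Lemma \ref{lem:Im(phi-bar)} applied to the formed space $(\langle v \rangle^\perp, \phi|_{\langle v \rangle^\perp})$ immediately yields $\im{\overline{\phi}|_{\langle v \rangle^\perp}} = \F{q_0}$; in particular, this supplies the required nonsingular $w \in \langle v \rangle^\perp$.

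\medskip
\noindent
For the second case, assume $v$ is singular and $n \geq 3$. Here $\langle v \rangle \leq \langle v \rangle^\perp$, so $\phi|_{\langle v \rangle^\perp}$ is degenerate and Lemma \ref{lem:Im(phi-bar)} does not apply directly. However, as observed in the paragraph preceding the corollary statement (via the induced nondegenerate unitary form on the quotient $U = \langle v \rangle^\perp / \langle v \rangle$ of dimension $n-2 \geq 1$, together with the fact that all maximal totally isotropic subspaces have dimension at most $n/2$), the subspace $\langle v \rangle^\perp$ contains a nonsingular vector $w$. Running the norm-surjectivity argument inside $\langle w \rangle \subseteq \langle v \rangle^\perp$ as described above then gives $\im{\overline{\phi}|_{\langle v \rangle^\perp}} \supseteq \F{q_0}$, and combining with Lemma \ref{lem:Im(phi-bar)} finishes the proof.

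\medskip
\noindent
There is no real obstacle here: the corollary is a near-immediate consequence of Lemma \ref{lem:Im(phi-bar)} together with the geometric remarks already recorded just before the statement. The only point requiring a small check is the existence of a nonsingular vector in the orthogonal complement in each case, which is handled by nondegeneracy in the first case and by the quotient construction in the second.
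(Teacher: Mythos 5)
Your proposal is correct and follows essentially the same route as the paper: in the nonsingular case apply Lemma \ref{lem:Im(phi-bar)} to the nondegenerate space $\langle v \rangle^\perp$, and in the singular case invoke the remarks preceding the corollary to produce a nonsingular vector in $\langle v \rangle^\perp$ and rerun the norm-surjectivity argument on the line it spans. The paper's proof is a one-line citation of exactly these ingredients; your version merely makes explicit the point that Lemma \ref{lem:Im(phi-bar)} cannot be applied verbatim to the degenerate restriction in the singular case, which is a worthwhile clarification but not a different argument.
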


\begin{proof}
This follows immediately from Lemma \ref{lem:Im(phi-bar)} applied to $\langle v \rangle^\perp$, and the remarks above.
\end{proof}

\begin{proposition} \label{prop:C8-GUdiam2}
Let $\Gamma$ be a graph and $G \leq \Aut{\Gamma}$ such that $G$ satisfies Hypothesis {\rm\ref{hyp:HA}} with $G_0 = \GamU{n,q}$. Then $\Gamma$ is $G$-symmetric with diameter $2$ if and only if $n \geq 2$ and $\Gamma \cong \Cay{V,S}$, where $V = \F{q}^n$ and $S \in \{S_0, S_\#\}$, with $S_0$ and $S_\#$ as in (\ref{eq:S_lambda}) and (\ref{eq:S_theta}), respectively.
\end{proposition}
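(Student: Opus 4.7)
The plan is to invoke Lemma~\ref{lem:HAgraphs} to reduce to showing that $\Gamma \cong \Cay{V,S}$ for a $G_0$-orbit $S \subseteq V^\#$ satisfying $-S = S$ and $S \cup (S+S) = V$. By Proposition~\ref{prop:semisimilarity-orbits} the only candidate orbits are $S_0$ (when nonempty) and $S_\#$, and both are manifestly closed under negation. For $n = 1$ one computes $\phi(v,v) = v^{q_0+1} \neq 0$ for all $v \in V^\#$, so $S_0 \cap V^\# = \emptyset$ and $S_\# = V^\#$; the associated Cayley graph is complete and thus of diameter one. For $n \geq 2$, Theorem~\ref{thm:Wittindex} supplies a hyperbolic pair, so both $S_0$ and $S_\#$ are proper nonempty subsets of $V^\#$, and the zero vector is handled in either case by $0 = s + (-s)$ with $s \in S$.

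For $S = S_0$ with $n \geq 2$, any nonsingular $v$ has $\mu := \phi(v,v) \in \F{q_0}^\#$ by Lemma~\ref{lem:Im(phi-bar)}. I would try $a = \lambda v + w$ with $w \in \langle v \rangle^\perp$; a direct computation gives $\phi(a,a) = \lambda \overline{\lambda}\mu + \phi(w,w)$ and $\phi(v-a,v-a) = \mu(1-\lambda)(1-\overline{\lambda}) + \phi(w,w)$, so it suffices to choose $\lambda$ with $\mathrm{Tr}(\lambda) = 1$ and then $w$ with $\phi(w,w) = -\lambda \overline{\lambda}\mu$. The former is solvable because $\mathrm{Tr}\colon \F{q} \to \F{q_0}$ is surjective; the latter because $-\lambda \overline{\lambda}\mu \in \F{q_0} = \im{\overline{\phi}|_{\langle v \rangle^\perp}}$ by Corollary~\ref{cor:Im(phi-bar)}. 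Hence every nonsingular vector lies in $S_0 + S_0$.

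For $S = S_\#$ with $n \geq 2$, any $v \in S_0$ extends to a hyperbolic pair $\{v, v'\}$ by Theorem~\ref{thm:Wittindex}. Taking $a := \alpha v + v'$, a short computation using $\phi(v,v) = \phi(v',v') = 0$ and $\phi(v,v') = 1$ yields $\phi(a,a) = \mathrm{Tr}(\alpha)$ and $\phi(v-a,v-a) = \mathrm{Tr}(\alpha) - 2$, so any $\alpha \in \F{q}$ with $\mathrm{Tr}(\alpha) \notin \{0,2\}$ produces the decomposition $v = a + (v-a)$ with both summands in $S_\#$. Such $\alpha$ exists because $\mathrm{Tr}$ surjects onto $\F{q_0}$ and $|\F{q_0}| > |\{0,2\}|$ in every case: when $q$ is even, $\{0,2\} = \{0\}$ and $q_0 \geq 2$; when $q$ is odd, $q_0$ is odd and hence $q_0 \geq 3$. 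This final step, requiring a small case split on the characteristic and verification of the smallest unitary cases $q = 4$ and $q = 9$, is where I expect the main (if still minor) obstacle, since the earlier reductions follow essentially directly from the cited results.
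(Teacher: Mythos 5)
Your proposal is correct and follows essentially the same route as the paper: reduce via Lemma~\ref{lem:HAgraphs} and Proposition~\ref{prop:semisimilarity-orbits} to the two orbits $S_0$ and $S_\#$, dispose of $n=1$ by completeness, and then verify $S_\# \subseteq S_0+S_0$ and $S_0 \subseteq S_\#+S_\#$ by explicit trace-map computations resting on Corollary~\ref{cor:Im(phi-bar)}. The only (cosmetic) difference is in the second inclusion, where your uniform hyperbolic-pair construction $a = \alpha v + v'$ with $\mathrm{Tr}(\alpha)\notin\{0,2\}$ replaces the paper's split into $n\geq 3$ (a perpendicular vector in $S_\mu$) and $n=2$ (an explicit singular $u$ with $\phi(u,v)=1$), and proves the slightly weaker but sufficient $S_0\subseteq S_\#+S_\#$ in place of $S_0\subseteq S_\mu+S_\mu$; your anticipated difficulty at $q=4,9$ does not materialise, as your own counting argument already covers those cases.
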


\begin{proof}
By Lemma \ref{lem:HAgraphs} and Proposition \ref{prop:similarity-orbits} we only need to prove that $\Cay{V,S}$ has diameter $2$ if and only if $n \geq 2$. If $n = 1$ then $V$ is anisotropic, so $\GU{n,q}$ is transitive on $V^\#$ by Proposition \ref{prop:similarity-orbits} (1) and $\Cay{V,S}$ is a complete graph. If $n \geq 2$ then $V^\# \setminus S_0 = S_\#$ and $V^\# \setminus S_\# = S_0$ by Proposition \ref{prop:similarity-orbits}.

\emph{Claim 1: $S_\# \subseteq S_0 + S_0$.} Let $v \in S_\#$. Then by Corollary \ref{cor:Im(phi-bar)} there exists $u \in \langle v \rangle^\perp$ with $\overline{\phi}(u) = -\overline{\phi}(v)$. Set $w := \beta(u+v)$, where $\beta := \alpha\overline{\phi}(v)^{-1}$ and $\alpha \in \F{q}$ such that ${\rm Tr}(\alpha) = \overline{\phi}(v)$. Then $w, v-w \in S_0$, so $v \in S_0 + S_0$ and therefore $S_\# \subseteq S_0 + S_0$.

\emph{Claim 2: $S_0 \subseteq S_\mu + S_\mu$ for any $\mu \in (\im{\overline{\phi}})^\#$.} Let $v \in S_0$. Suppose first that $n \geq 3$. Then by Corollary \ref{cor:Im(phi-bar)}, for any $\mu \in (\im{\overline{\phi}})^\#$ there exists $w \in S_\mu \cap \langle v \rangle^\perp$. It is easy to verify that $\overline{\phi}(v-w) = \overline{\phi}(w)$, so $v-w \in S_\mu$ and $v \in S_\mu + S_\mu$. Therefore $S_0 \subseteq S_\mu + S_\mu$. If $n = 2$ then $\langle v \rangle^\perp = \langle v \rangle$ for any $v \in S_0$. We show that there exists $u \in S_0$ such that $\phi(u,v) = 1$. Indeed, take $x \in V \setminus \langle v \rangle$. Then $\phi(v,x) \neq 0$. If $x \in S_0$ define $u' := x$; if $x \notin S_0$ let $u' := \alpha v + \phi(v,x)x$ where $\alpha \in \F{q}$ with ${\rm Tr}(\alpha) = -\overline{\phi}(x)$. Then in both cases $u' \in S_0$ and $\phi(u',v) \neq 0$, and we take $u$ to be the suitable scalar multiple of $u'$ such that $\phi(u,v) = 1$. Let $w := \beta u + \gamma v$, where $\beta,\gamma \in \F{q}$ with ${\rm Tr}(\beta) = 0$ and ${\rm Tr}(\beta^{q_0}\gamma) = \mu$. Then $w, v-w \in S_\mu$, and thus $v \in S_\mu + S_\mu$. Therefore $S_0 \subseteq S_\mu + S_\mu$.

It follows from Claims 1 and 2, respectively, that $\Cay{V,S_0}$ and $\Cay{V,S_\#}$ both have diameter $2$. This completes the proof.
\end{proof}
\subsection{Class $\C_2$} \label{sec:C2}

In this case $V = \oplus_{i=1}^t{U_i}$, where $U_i = \F{q}^m$ for each $i$, $mt = n$ and $t \geq 2$. Assume that $\mathcal{B} = \bigcup_{i=1}^t{\mathcal{B}_i}$, where $\mathcal{B}_i$ is a basis for $U_i$ for each $i$. We write the elements of $V$ as $t$-tuples over $\F{q}^m$; under this identification the $\tau$-action is equivalent to the natural componentwise action.

Assume first that $H = \GamL{n,q}$. It turns out that the $G_0$-orbits in $V^\#$ are the same as the $L$-orbits, and thus the graphs that we obtain are precisely those in \cite[Proposition 3.2]{HAcase}.

\begin{lemma} \label{lem:C2orbits}
Let $G_0$ be as in case $(\C_2)$ of Theorem {\rm\ref{thm:Asch-GL}}. Then the $G_0$-orbits in $V^\#$ are the sets $X_s$ for each $s \in \{1, \ldots, t\}$, where
	\begin{equation} \label{eq:X_s}
	X_s := \{ (u_1, \ldots, u_t) \in V^\# \ | \ \text{exactly $s$ coordinates nonzero} \}.
	\end{equation}
\end{lemma}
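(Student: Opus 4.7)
The plan is to verify two things: that each set $X_s$ is $G_0$-invariant, and that $G_0$ acts transitively on $X_s$. Since the sets $X_1,\ldots,X_t$ form a partition of $V^\#$, these two facts together will give the claim.

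First I would check invariance. Writing elements of $V$ as $t$-tuples $(u_1,\ldots,u_t)$ with $u_i\in U_i$, observe that $G_0 = (\GL{m,q}\wr\Sym{t})\rtimes\Aut{\F{q}}$ acts as follows: the base group $\GL{m,q}^t$ acts coordinatewise by invertible linear maps on each $U_i$, the top group $\Sym{t}$ permutes the coordinates, and $\tau\in\Aut{\F{q}}$ acts on each coordinate via the Frobenius on the coefficients relative to $\mathcal{B}_i$. Each of these operations sends a zero coordinate to a zero coordinate and a nonzero coordinate to a nonzero coordinate, so the number of nonzero entries is preserved. Hence $X_s^{G_0}=X_s$ for every $s$.

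Second I would prove transitivity on each $X_s$, using only the subgroup $L=\GL{m,q}\wr\Sym{t}$. Given $(u_1,\ldots,u_t),(v_1,\ldots,v_t)\in X_s$, first apply a permutation from $\Sym{t}$ to arrange that the nonzero coordinates of both tuples lie in positions $1,\ldots,s$. Since $\GL{m,q}$ is transitive on $U_i^\#$, for each $i\in\{1,\ldots,s\}$ I can choose $g_i\in\GL{m,q}$ with $u_i^{g_i}=v_i$, and take $g_i=1$ for $i>s$; the element $(g_1,\ldots,g_t)\in\GL{m,q}^t$ then completes the transition. Thus each $X_s$ is a single $G_0$-orbit.

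There is no real obstacle here; the only thing to be slightly careful about is confirming that the $\tau$-action (which acts on coordinates by field automorphism rather than linearly) still preserves the zero/nonzero pattern of coordinates, which is immediate from the componentwise description in~(\ref{eq:GamL-action}) together with the choice $\mathcal{B}=\bigcup_i\mathcal{B}_i$. Nonemptiness of each $X_s$ for $1\le s\le t$ is clear, so we obtain exactly $t$ orbits as claimed.
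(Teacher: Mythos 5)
Your proof is correct and follows essentially the same route as the paper: the paper also reduces to showing $v^{G_0}\subseteq X_s$ and $v^L = X_s$, except that it cites \cite[Lemma 3.1]{HAcase} for the latter rather than writing out the permutation-plus-componentwise-transitivity argument that you give explicitly.
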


\begin{proof}
Let $v \in X_s$. Clearly $v^{G_0} \subseteq X_s$; since $v^L = X_s$ by \cite[Lemma 3.1]{HAcase} it follows that $v^{G_0} = X_s$.
\end{proof}

\begin{proposition} \label{prop:C2-diam2}
Let $\Gamma$ be a graph and $G \leq \Aut{\Gamma}$ such that $G$ satisfies Hypothesis {\rm\ref{hyp:HA}}, with $H = \GamL{n,q}$ and $G_0$ as in case $(\C_2)$ of Theorem {\rm\ref{thm:Asch-GL}}. Then $\Gamma$ is $G$-symmetric with diameter $2$ if and only if $\Gamma \cong \Cay{V,X_s}$, where $X_s$ is as in {\rm(\ref{eq:X_s})}, such that $q^m > 2$ and $s \geq t/2$.
\end{proposition}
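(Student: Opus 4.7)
The plan is to apply Lemma~\ref{lem:HAgraphs} to translate the condition ``$\Gamma$ is $G$-symmetric with diameter $2$'' into: $\Gamma \cong \Cay{V,S}$ where $S$ is a $G_0$-orbit satisfying $-S = S$, $S \subsetneq V$, and $S \cup (S+S) = V$. By Lemma~\ref{lem:C2orbits} every such $S$ equals $X_s$ for some $s \in \{1,\ldots,t\}$; moreover $-X_s = X_s$ is automatic, and $X_s \subsetneq V$ since $X_s \subseteq V^\#$. The problem therefore reduces to deciding, for which $s$, every $X_k$ with $k \in \{0,1,\ldots,t\}$ is contained in $X_s \cup (X_s + X_s)$. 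Since Lemma~\ref{lem:C2orbits} shows the $G_0$-orbits coincide with the orbits of $L = G_0 \cap \GL{n,q}$ treated in \cite[Lemma~3.1]{HAcase}, this is the same combinatorial question settled in \cite[Proposition~3.2]{HAcase}.

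For sufficiency, assume $q^m > 2$ and $s \geq t/2$, and let $v \in X_k$ with support $I$ of size $k \neq s$. I would first choose size-$s$ subsets $A, B \subseteq \{1,\ldots,t\}$ with $I \subseteq A \cup B$; this is possible precisely because $k \leq t \leq 2s$, e.g.\ by taking $A = \{i_1,\ldots,i_s\}$ and $B = (I \setminus A) \cup A'$ for a suitable $A' \subseteq A$ of size $2s - k$ when $k > s$, or by placing $I$ inside $A$ when $k \leq s$. Then I would construct $u \in X_s$ supported on $A$ and $w \in X_s$ supported on $B$ with $u + w = v$, coordinate by coordinate: on $A \triangle B$, place $v_i$ on whichever side contains $i$; on $A \cap B \cap I$, write $v_i = u_i + w_i$ with both $u_i, w_i$ nonzero, which is possible exactly because $|\F{q}^m| = q^m \geq 3$; and on $A \cap B \setminus I$, pick any nonzero $u_i$ and set $w_i = -u_i$. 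Finally $0 \in X_s + X_s$ via $w = -u$, giving $V = X_s \cup (X_s + X_s)$.

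For necessity, I would show both conditions are forced. If $s < t/2$ then for any size-$s$ subsets $A, B$ we have $|A \cup B| \leq 2s < t$, so any element of $X_s + X_s$ has support of size at most $2s < t$; hence no element of $X_t$ lies in $X_s + X_s$, and $X_t \neq X_s$. If $q^m = 2$ then $q = 2$ and $m = 1$, and the wreath product $\GL{1,2} \wr \Sym{t} = \Sym{t}$ fixes the line spanned by $(1,\ldots,1) \in \F{2}^t$, hence acts reducibly on $V$, contradicting the irreducibility requirement on $G_0$ in Theorem~\ref{thm:Asch-GL}; so this case does not arise under Hypothesis~\ref{hyp:HA}.

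The most delicate point I expect to be this exclusion of $q^m = 2$: for small $t$ the Cayley graph $\Cay{\F{2}^t, X_s}$ can in fact have diameter $2$ (e.g.\ $\Cay{\F{2}^2, X_1}$ is the $4$-cycle), so the real reason for excluding $q^m = 2$ must come from the reducibility obstruction on the candidate group, not from a graph-theoretic failure. The sufficiency construction itself is routine once one isolates the coordinate $i \in A \cap B \cap I$ as the only place where the field size enters.
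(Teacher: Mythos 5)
Your proposal is correct, and its overall shape matches the paper's: reduce via Lemma \ref{lem:HAgraphs} and Lemma \ref{lem:C2orbits} to the question of which $X_s$ satisfy $X_s \cup (X_s+X_s)=V$. The difference is that the paper's own proof stops there and cites \cite[Proposition 3.2]{HAcase} for the combinatorial answer, whereas you reprove that answer from scratch. Your support-splitting construction for sufficiency is sound: in each case $A\triangle B\subseteq I\subseteq A\cup B$, so the only coordinates where the field size matters are those in $A\cap B\cap I$, where $v_i=u_i+w_i$ with both summands nonzero forces $q^m\ge 3$; the conditions $|A|=|B|=s$ and $k\le t\le 2s$ make the bookkeeping work, and $0\in X_s+X_s$ is immediate. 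Your necessity argument for $s\ge t/2$ (support of a sum of two weight-$s$ vectors has size at most $2s$, so $X_t$ is missed) is exactly right.

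The one point worth flagging is your treatment of $q^m=2$, and you have in fact handled it correctly and more carefully than a naive reading would: when $q^m=2$ the group $G_0\cap\GL{n,q}$ is $\Sym{t}$ permuting coordinates of $\F{2}^t$, which fixes the all-ones vector and is therefore reducible, so this case is excluded by Hypothesis \ref{hyp:HA} rather than by a failure of the diameter condition (and indeed for $t=2$, $s=1$ the graph itself is the $4$-cycle of diameter $2$). This is consistent with the paper's later remark in the proof of Proposition \ref{prop:C2-diam2-Sp} that diameter $2$ holds ``with $G$ quasiprimitive'' if and only if $q^m>2$. If you wanted a purely graph-theoretic exclusion for $t\ge 3$ you could add the parity observation that over $\F{2}$ every element of $X_s+X_s$ has even weight, but this is not needed given the irreducibility hypothesis. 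No gaps.
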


\begin{proof}
This follows immediately from Lemma \ref{lem:C2orbits} and \cite[Proposition 3.2]{HAcase}.
\end{proof}

We now consider the case where $H = \GamSp{n,q}$ with $n \geq 4$. By Theorem \ref{thm:Asch-Sp} there are two types of $\C_2$-subgroups, corresponding to two kinds of decompositions. We refer to these subcases as $(\C_2.1)$ and $(\C_2.2)$.
	\begin{enumerate}[$(\C_2.1)$]
	\item The dimension $m$ of the subspaces $U_i$ is even, $U_i$ is a symplectic space for each $i$, the subspaces $U_i$ are pairwise orthogonal, and
		\begin{align} \label{eq:G0-C2.I}
		G_0 &= \left\{ (g_1, \ldots, g_t)\pi\sigma \ | \ \pi \in \Sym{t},\, \sigma \in \langle \tau \rangle,\, g_i \in \GSp{m,q}, \lambda(g_i) = \lambda(g_1) \right\} \notag \\
		&\cong (\Sp{m,q}^t.[q-1].\Sym{t}) \rtimes \langle \tau \rangle,
		\end{align}
	where $\lambda : \GSp{n,q} \rightarrow \F{q}^\#$ is as defined in Subsection \ref{subsec:semilin}.
	\item The dimension $m = n/2$ so that $t=2$, both subspaces $U_i$ are totally singular with dimension $n/2$, $q$ is odd if $n = 4$, and
		\begin{align} \label{eq:G0-C2.II}
		G_0 &= \left\{ \left(g,g^{-\top}\right)\pi\sigma \ \vline \ \pi \in \Sym{t},\, \sigma \in \langle \tau \rangle,\, g \in \GL{m,q} \right\} \notag \\
		&\cong (\GL{m,q}.[2]) \rtimes \langle \tau \rangle,
		\end{align}
	where $g^\top$ denotes the transpose of $g$, and $g^{-\top} = (g^\top)^{-1}$.
	\end{enumerate}

\begin{lemma} \label{lem:C2orbits-Sp}
For each $s \in \{1, \ldots, t\}$ let $X_s$ be as in {\rm(\ref{eq:X_s})}. The $G_0$-orbits in $V^\#$ are
	\begin{enumerate}
	\item the sets $X_s$ for each $s \in \{1, \ldots, t\}$ if case $(\C_2.1)$  holds and $G_0$ is as in {\rm(\ref{eq:G0-C2.I})};
	\item the sets $X_1$ and $\bigcup_{\sigma \in \langle \tau \rangle}{W_{\beta^\sigma}}$ for all $\beta \in \F{q}$, if case $(\C_2.2)$  holds and $G_0$ is as in {\rm(\ref{eq:G0-C2.II})}, where
		\begin{equation} \label{eq:W_beta}
		W_\beta := (w_1,x_\beta)^L,
		\end{equation}
$L = G_0 \cap \GL{n,q} \cong \GL{m,q}.[2]$, $w_1 := (1,0, \ldots, 0) \in \F{q}^m$, and $x_\beta \in (\F{q}^m)^\#$ with first component $\beta$.
	\end{enumerate}
\end{lemma}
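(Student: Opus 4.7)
The plan is to treat the two subcases separately. For case (1), the key fact is that by Theorem \ref{thm:isometry-orbits} (together with the observation following it) the group $\Sp{m,q}$ is transitive on $U_i^\#$ for each $i$. Hence $\Sp{m,q}^t$ is transitive on the set of $t$-tuples with nonzero entries in specified positions, and $\Sym{t}$ permutes these patterns, so that $\Sp{m,q}^t \rtimes \Sym{t} \leq L$ is already transitive on each $X_s$. Since the decomposition $V = U_1 \perp \cdots \perp U_t$ is preserved by $G_0$---the scalar factor $[q-1]$ consists of diagonal elements with respect to $\mathcal{B}$, and $\tau$ acts componentwise on the chosen symplectic basis---each $X_s$ is $G_0$-invariant, so the $X_s$ are precisely the $G_0$-orbits on $V^\#$.

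For case (2), I would write elements of $V = U_1 \oplus U_2$ as pairs $(u,w)$ with $u,w \in \F{q}^m$. Because $U_1$ and $U_2$ are totally singular, the symplectic form restricts to zero on each, and the only nontrivial datum of $\phi$ is the pairing between them, which in our coordinates equals $\beta(u,w) := u w^\top$ up to sign. I would verify that $\beta$ is an $L$-invariant: for $g \in \GL{m,q}$ acting as $(u,w) \mapsto (ug, w g^{-\top})$ we have $(ug)(w g^{-\top})^\top = u g g^{-1} w^\top = u w^\top$; the involution in the $[2]$-factor, forced to preserve the symplectic form, swaps the two coordinates (possibly after a scalar rescaling constrained by symplecticity) and so preserves $\beta$ since $u w^\top$ is a scalar. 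The Frobenius $\tau$ acts componentwise and therefore sends $\beta$ to $\beta^\tau$.

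The main step is to show that $L$ is transitive on $\{(u,w) \in V : u,w \neq 0,\ \beta(u,w) = \beta\}$ for each $\beta \in \F{q}$. Using $\GL{m,q}$ one first moves $u$ to $w_1 = e_1$; the stabilizer of $e_1$ then consists of block matrices whose first row is $(1,0,\ldots,0)$ and whose bottom-right block is an arbitrary element of $\GL{m-1,q}$, and a short computation shows it sends $w = (\beta,w')$ to $(\beta,\ (-\beta c^\top + w')h^{-\top})$ for some $c \in \F{q}^{m-1}$ and $h \in \GL{m-1,q}$. When $\beta \neq 0$ one solves for $c$ to kill the second component; when $\beta = 0$ one uses $h$ to standardize the nonzero $w' \in \F{q}^{m-1}$. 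Combined with the handling of the $X_1$ part (exactly one of $u,w$ zero), which follows from the transitivity of $\GL{m,q}$ on $(\F{q}^m)^\#$ together with the swap, this yields the $L$-orbits as $X_1$ and the sets $W_\beta$, $\beta \in \F{q}$. Finally, since $\tau$ sends $W_\beta$ to $W_{\beta^\tau}$, the $G_0 = L \rtimes \langle \tau \rangle$-orbits are $X_1$ together with the $\langle \tau \rangle$-unions $\bigcup_{\sigma \in \langle \tau \rangle} W_{\beta^\sigma}$. The main obstacle is the explicit orbit computation in case (2); once that is in hand the rest is routine bookkeeping.
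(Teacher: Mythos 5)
Your proposal is correct and follows essentially the same route as the paper: part (1) via transitivity of $\Sp{m,q}$ on $U_i^\#$, and part (2) by normalising the first component to $w_1$, computing the action of $\Stab{\GL{m,q}}{w_1}$ via $g^{-\top}$ on the second component, observing that the swap in the $[2]$-factor does not fuse orbits, and then taking $\langle\tau\rangle$-unions. Your identification of the invariant as the pairing $uw^\top$ is just a coordinate-free restatement of the paper's invariant (the first component $\beta$ of $v$ once $u = w_1$), and the explicit stabiliser computation matches the paper's Claim 1.
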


\begin{proof}
The proof of part (1) is similar to that of \cite[Lemma 3.1]{HAcase} and uses the transitivity of $\Sp{m,q}$ on $U_i^\#$, so we only need to prove part (2). Assume that case ($\C_2.2$) holds. Then $L = K.\Sym{2}$, where $K := \left\{ \left(g,g^{-\top}\right) \ \vline \ g \in \GL{m,q} \right\}$. It is easy to see that $U_1 \oplus \{\0\}$ and $\{\0\} \oplus U_2$ are $K$-orbits, so $X_1 = (U_1 \otimes \{\0\}) \cup (\{\0\} \oplus U_2)$ is a $G_0$-orbit. Let $(u,v) \in X_2$, and for any $\beta \in \F{q}$ define
	\begin{equation} \label{eq:w_beta}
	w_\beta :=
		\begin{cases}
		(\beta, 0, \ldots, 0) \ &\text{if $\beta \neq 0$}, \\
		(0, 1, 0, \ldots, 0) &\text{if $\beta = 0$}.
		\end{cases}
	\end{equation}
Since $w_1 \in u^{\GL{m,q}}$ we can assume that $u = w_1$. Suppose that $v = (\beta,v_2,\ldots,v_m)$.

\emph{Claim 1: $(w_1,y) \in (w_1,v)^K$ if and only if $y = (\beta,y_2,\ldots,y_m)$ for some $y_2, \ldots, y_m \in \F{q}$.} Indeed, $(w_1,y) \in (w_1,v)^K$ if and only if $y = v^{h^{-\top}}$ for some $h \in \Stab{\GL{m,q}}{w_1}$. Now $w_1^h = w_1$ if and only if the matrix of $h^{-\top}$ has the form
	\[ \renewcommand{\arraystretch}{0.8}
	\left( \begin{array}{c|ccc}
	1 &  & C &  \\
	\hline 0 &  &  &  \\
	\vdots &  & D &  \\
	0 &  &  &
	\end{array} \right) \]
where $C$ is a $1 \times (m-1)$ matrix over $\F{q}$ and $D \in \GL{m-1,q}$. Clearly, the orbit of $v$ under the subgroup $\left\{ h^{-\top} \ \vline \ h \in \Stab{\GL{m,q}}{w_1} \right\}$ is the set of all nonzero vectors in $\F{q}^m$ with first component $\beta$. Therefore Claim 1 holds.

\emph{Claim 2: $(w_1,v)^L = (w_1,v)^K$.} By Claim 1 we can assume that $v = w_\beta$. If $\beta \neq 0$ let
	\[ \renewcommand{\arraystretch}{0.8}
	g := \left( \begin{array}{c|ccc}
	\beta & 0 & \cdots & 0 \\
	\hline 0 &  &  &  \\
	\vdots & \multicolumn{3}{c}{I_{m-1}} \\
	0 &  &  &
	\end{array} \right). \]
If $\beta = 0$ let $g :=
	\left( \renewcommand{\arraystretch}{0.8} 
	\begin{matrix}
	0 & 1 \\
	1 & 0
	\end{matrix} \right)$
if $m = 2$, and
	\[ \renewcommand{\arraystretch}{0.8}
	g := \left( \begin{array}{cc|ccc}
	0 & 1 &  &  &  \\
	1 & 0 &  & 0 &  \\
	\hline  &  &  &  &  \\
	\multicolumn{2}{c|}{0} &  & I_{m-2} &  \\
	 &  &  &  &  \\
	\end{array} \right) \]
if $m > 2$. Then $g \in \GL{m,q}$ for all cases, and $w_1^g = w_1^{g^\top} = v$. Hence $(w_1^g,v^{g^{-\top}}) = (v,w_1)$, so that $(v,w_1) \in (w_1,v)^K$. Therefore $(w_1,v)^L = (w_1,v)^K \cup (v,w_1)^K = (w_1,v)^K$, which proves Claim 2.

It follows from Claims 1 and 2 that each set $W_\beta$ is an $L$-orbit (and moreover $W_\beta = W_{\beta'}$ if and only if $\beta = \beta'$). It follows from the definition of the $\tau$-action on $V^\#$ that $(w_1,v)^{G_0} = \bigcup_{\sigma \in \langle \tau \rangle}{W_{\beta^\sigma}}$. This completes the proof of part (2).
\end{proof}

\begin{proposition} \label{prop:C2-diam2-Sp}
Let $\Gamma$ be a graph and $G \leq \Aut{\Gamma}$ such that $G$ satisfies Hypothesis {\rm\ref{hyp:HA}} with $H = \GamSp{n,q}$ and $i = 2$. Then $\Gamma$ is $G$-symmetric with diameter $2$ if and only if $\Gamma \cong \Cay{V,S}$, where
	\begin{enumerate}
	\item if case $(\C_2.1)$  holds, then $q^m > 2$, $G_0$ is as in {\rm(\ref{eq:G0-C2.I})}, $S = X_s$, and $s \geq t/2$;
	\item if case $(\C_2.2)$  holds with $q^m = 2$, then $G_0$ is as in {\rm(\ref{eq:G0-C2.II})}, and $S = W_\beta$ for any $\beta \in \F{q}$;
	\item if case $(\C_2.2)$  holds with $q^m > 2$, then $G_0$ is as in {\rm(\ref{eq:G0-C2.II})}, and $S = X_1$ or $S = \bigcup_{\sigma \in \langle \tau \rangle}{W_{\beta^\sigma}}$ for some $\beta \in \F{q}$;
	\end{enumerate}
with $X_s$ as in {\rm(\ref{eq:X_s})} and $W_\beta$ as {\rm(\ref{eq:W_beta})}.
\end{proposition}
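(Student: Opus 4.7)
The plan is to apply Lemma \ref{lem:HAgraphs} to reduce the assertion to a combinatorial condition on connection sets, and then to use Lemma \ref{lem:C2orbits-Sp} to restrict the possible choices of $S$. Concretely, $\Cay{V, S}$ is $G$-symmetric of diameter $2$ if and only if $S$ is a $G_0$-orbit in $V^\#$ satisfying $-S = S$, $S \subsetneq V$, and $V = S \cup (S + S)$. Lemma \ref{lem:C2orbits-Sp} identifies the $G_0$-orbits as the sets $X_s$ in case $(\C_2.1)$, and as $X_1$ together with the sets $\bigcup_{\sigma \in \langle\tau\rangle} W_{\beta^\sigma}$ in case $(\C_2.2)$. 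The symmetry condition $-S = S$ is automatic in every case: for $X_s$ it is immediate, and for $W_\beta$ it follows from $(-u) \cdot (-v) = u \cdot v$ together with the description of $W_\beta$ derived below. Thus only the covering condition $V = S \cup (S + S)$ needs to be checked.

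\emph{Case $(\C_2.1)$.} The orbits $X_s$ coincide set-theoretically with the orbits analysed in \cite[Proposition 3.2]{HAcase} in the $\GamL{n,q}$-setting, and the condition $V = X_s \cup (X_s + X_s)$ depends only on the combinatorics of the direct sum $V = \bigoplus_{i=1}^t U_i$ and not on the symplectic structure. Hence the conclusion $q^m > 2$ and $s \geq t/2$ transfers directly from \cite[Proposition 3.2]{HAcase}.

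\emph{Case $(\C_2.2)$.} Under Hypothesis \ref{hyp:HA} we have $n \geq 4$, so $m = n/2 \geq 2$ and $q^m \geq 4$; in particular part $(2)$ of the proposition is vacuous, and only part $(3)$ requires a real argument. For $S = X_1 = (U_1^\#) \cup (U_2^\#)$ the covering is immediate: any $(x_1, x_2) \in V$ with $x_1, x_2 \neq 0$ is $(x_1, 0) + (0, x_2) \in X_1 + X_1$; any vector with exactly one nonzero coordinate lies in $X_1$; and $0 = u + (-u) \in X_1 + X_1$. For $S = \bigcup_{\sigma \in \langle\tau\rangle} W_{\beta^\sigma}$ I first establish the explicit description
\[
W_\beta \;=\; \{(u, v) \in V : u, v \in (\F{q}^m)^\#,\ u \cdot v = \beta\},
\]
which follows from the proof of Lemma \ref{lem:C2orbits-Sp}(2): the $K$-action $(u, v) \mapsto (u g, v g^{-\top})$ manifestly preserves $u \cdot v$, and the stabiliser calculation in that proof shows this is the only invariant. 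Writing $B = \beta^{\langle \tau \rangle}$, we then have $S = \{(u, v): u, v \neq 0,\ u \cdot v \in B\}$.

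\textbf{Main obstacle.} The delicate step is verifying $V \subseteq S \cup (S+S)$ for $S = \bigcup_\sigma W_{\beta^\sigma}$. Given a target $(x, y) \in V^\# \setminus S$, I would parameterise decompositions $(x, y) = (u_1, v_1) + (x - u_1, y - v_1)$ and require $u_1 \cdot v_1 \in B$ together with $(x - u_1) \cdot (y - v_1) \in B$; expanding the second equation and eliminating $u_1 \cdot v_1$ gives the linear constraint $u_1 \cdot y + x \cdot v_1 \equiv x \cdot y \pmod{B - B}$. I would split into subcases according to whether $x = 0$, $y = 0$, or both are nonzero, and in each subcase count solutions to this pair of equations on $(u_1, v_1) \in \F{q}^m \times \F{q}^m$ using the assumption $m \geq 2$. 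The relevant solution set is affine of dimension at least $2m - 2 \geq 2$, so excluding the bounded collection of degenerate choices (where some component vanishes or $y - v_1 = 0$) leaves at least one admissible decomposition. Once this is confirmed the backward direction is complete, and the forward direction follows since Lemma \ref{lem:C2orbits-Sp} has already enumerated every candidate for $S$.
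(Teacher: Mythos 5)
Your reduction via Lemma \ref{lem:HAgraphs} and Lemma \ref{lem:C2orbits-Sp}, your treatment of case $(\C_2.1)$ and of $S = X_1$, and your identification $W_\beta = \{(u,v) : u,v \in (\F{q}^m)^\#,\ u\cdot v = \beta\}$ are all correct. The last of these is a genuinely different and cleaner packaging of the orbits than the paper uses: the paper works with the representatives $(w_1,x_\beta)$ and exhibits explicit matrices $g,h \in \GL{m,q}$ with $w_1^g + w_1^h = w_1$ together with a vector $x$ such that $x^{g^{-\top}} + x^{h^{-\top}}$ has prescribed first coordinate, thereby landing a sum of two elements of $W_\beta$ in each $W_\gamma$ and then using transitivity of $L$ on $W_\gamma$.

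However, the step you yourself flag as the main obstacle is where your argument breaks, and the gap is genuine. The claim that the solution set of $u_1\cdot v_1 = b_1$ and $u_1\cdot y + x\cdot v_1 = x\cdot y + b_1 - b_2$ has dimension at least $2m-2 \geq 2$ and therefore survives removal of the degenerate loci is not a valid principle here: each degenerate locus ($u_1 \in \{0,x\}$ or $v_1 \in \{0,y\}$) can meet the solution set in a set of size comparable to $q^{m-1}$ or $q^m$, so for $m=2$ and small $q$ the count $q^{2m-2}$ minus the exclusions does not stay positive. Worse, the conclusion itself is false without an additional hypothesis: for $q=2$, $m=2$, $\beta=0$ one computes $W_0 = \{((1,0),(0,1)),\,((0,1),(1,0)),\,((1,1),(1,1))\}$ and $W_0 + W_0 = \{0\}\cup W_0$, so $X_1 \not\subseteq W_0 + W_0$ and the covering fails. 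The statement is rescued only by the hypothesis built into case $(\C_2.2)$ that $q$ is odd when $n=4$ (equivalently $m=2$); the paper's proof invokes this explicitly ("recall that $q$ is odd if $m=2$") when choosing the vector $x=(0,-\gamma/2)$, whereas your argument never uses it. Since your sketch, as written, would equally "prove" the excluded false case, any correct completion must single out $m=2$ and exploit the oddness of $q$ there. (A smaller point: your dismissal of part (2) as vacuous rests on the standing assumption $n\ge 4$ made at the start of the paper's $\C_2$ discussion for $\GamSp{n,q}$, not on Hypothesis \ref{hyp:HA} itself; the observation is defensible but should be attributed correctly.)
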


\begin{proof}
The graph of (1) is precisely that of Proposition {\rm\ref{prop:C2-diam2}}, and the fact that it is $G$-symmetric follows from Lemma \ref{lem:HAgraphs}. So assume that case ($\C_2.2$) holds. By Lemma \ref{lem:HAgraphs} we only need to show that $V = S \cup (S+S)$ unless $S = X_1$ and $q = 2$. It follows from Proposition {\rm\ref{prop:C2-diam2}} (with $t = 2$) that $\Cay{V,X_1}$ has diameter $2$ (with $G$ quasiprimitive) if and only if $q^m > 2$, which proves part of statement (3). Thus we may assume that $S = \bigcup_{\sigma \in \langle \tau \rangle}{W_{\beta^\sigma}}$ for some $\beta \in \F{q}$. It remains to prove that $V = S \cup (S+S)$.

Let $w_\beta$ be as in (\ref{eq:w_beta}) and $\gamma \in \F{q}$, with $\gamma \neq \beta$. Define
	\[ g_0 :=
	\left( 
	\begin{array}{cc}
	1 & 1 \\
	0 & 1
	\end{array} \right)
\text{ and } \
	h_0 :=
	\left( 
	\begin{matrix}
	0 & -1 \\
	-1 & \gamma_0
	\end{matrix} \right), \]
where $\gamma_0 := 1 - \beta^{-1}\gamma$ if $\beta \neq 0$ and $\gamma_0 := 0$ if $\beta = 0$. If $m = 2$ let $g := g_0$ and $h := h_0$; if $m \geq 3$ define $g$ and $h$ by
	\[ g := \left( 
	\begin{matrix}
	g_0 & 0 \\
	0 & I_{m-2}
	\end{matrix}
	\right) \]
and
	\[ h := \left( \renewcommand{\arraystretch}{0.8}
	\begin{array}{c|ccc}
	& 0 & \cdots & 0 \\
	h_0 & 1 & \cdots & 1 \\
	\hline \\
	0 & \multicolumn{3}{c}{I_{m-2}} \\
	&  &  &
	\end{array}
	\right). \]
Then $g,h \in \GL{m,q}$ for all $m \geq 2$, and $w_1^g + w_1^h = w_1$. Recall that $q$ is odd if $m = 2$, so we can take $x \in (\F{q}^m)^\#$ where
	\[ 
	x := \left\{ \begin{array}{ll}
	w_\beta \ &\text{if $\beta \neq 0$}; \\[2pt]
	(0, -\gamma/2) &\text{if $\beta = 0$ and $m = 2$}; \\[2pt]
	(0,0,1,0,\ldots,0) &\text{if $\beta = 0$ and $m \geq 3$}.
	\end{array} \right. \]
Then for all cases $y := x^{g^{-\top}} + x^{h^{-\top}}$ has first component $\gamma$. Hence, applying Lemma \ref{lem:C2orbits-Sp}, we have $W_\gamma = (w_1,y)^L \subseteq W_\beta + W_\beta$ for any $\gamma \neq \beta$. Since also $\{\0\} \cup X_1 \subseteq W_\beta + W_\beta$, it follows that $V = W_\beta \cup (W_\beta + W_\beta)$. Therefore $V = S \cup (S+S)$, which completes the proof of parts (2) and (3).
\end{proof}

\subsection{Class $\C_4$} \label{sec:C4}

In this case $V = U \otimes W = \F{q}^k \otimes \F{q}^m$ with $k,m \geq 2$, and $\mathcal{B}$ is a tensor product basis of $V$, that is,
	\begin{equation*}
	\mathcal{B} = \{ u_i \otimes w_j \ | \ 1 \leq i \leq k, \ 1 \leq j \leq m \},
	\end{equation*}
where $\mathcal{B}_U := \{u_1, \ldots, u_k\}$ and $\mathcal{B}_W := \{w_1, \ldots, w_m\}$ are fixed bases of $U$ and $W$, respectively. We choose $\tau$ to fix each of the vectors $u_i \otimes w_j$. Then for any simple vector $u \otimes w \in V$, we have $(u \otimes w)^\tau = u^\tau \otimes w^\tau$, and for any $v = \sum_{i=1}^{r}{(a_i \otimes b_i)} \in V$,
	\[ v^\tau = \sum_{i=1}^r{a_i^\tau \otimes b_i^\tau}. \]
Recall that $k \neq m$ in the description given in Theorems \ref{thm:Asch-GL} and \ref{thm:Asch-Sp}; however, all of the results in this section also hold for $k = m$, so we do not assume that $k$ and $m$ are distinct. In this way the results yield useful information for the $\C_7$ case.

A nonzero vector in $V$ is said to be \emph{simple} in the decomposition $U \otimes W$ if it can be written as $u \otimes w$ for some $u \in U$ and $w \in W$. The \emph{tensor weight} $\wt{v}$ of $v \in V^\#$, with respect to this decomposition, is the least number $s$ such that $v$ can be written as the sum of $s$ simple vectors in $U \otimes W$. It follows from \cite[Lemma 3.3]{HAcase} that $\wt{v} \leq \min{k,m}$ for any $v \in V^\#$, and that for each $s \in \{ 1, \ldots, \min{k,m} \}$ there is a vector $v \in V^\#$ with weight $s$.

The proof of the following observation is straightforward and is omitted.

\begin{lemma} \label{lem:tensorwt-aut}
For any $v \in V^\#$ and any $\sigma \in \Aut{\F{q}}$,
	\[ \wt{v^\sigma} = \wt{v}. \]
\end{lemma}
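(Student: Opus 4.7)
The plan is to prove the inequality $\wt{v^\sigma} \le \wt{v}$ directly from the definitions, and then obtain equality by applying the same inequality to $\sigma^{-1}$ and the vector $v^\sigma$. The key observation is that the action of any $\sigma \in \Aut{\F{q}} = \langle \tau \rangle$, as described just before the lemma, sends simple vectors to simple vectors: indeed, writing $u = \sum_i \lambda_i u_i$ and $w = \sum_j \mu_j w_j$ and using that $\sigma$ fixes each basis tensor $u_i \otimes w_j$, one checks directly that $(u \otimes w)^\sigma = u^\sigma \otimes w^\sigma$, where $u^\sigma$ and $w^\sigma$ denote the natural $\sigma$-action on $U$ and $W$ via $\mathcal{B}_U$ and $\mathcal{B}_W$, respectively. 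Moreover $\sigma$ is additive on $V$, since in coordinates with respect to $\mathcal{B}$ it simply applies $\sigma$ componentwise.

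With these two facts in hand, the argument becomes a one-liner. Let $s = \wt{v}$ and fix a minimal expression $v = \sum_{i=1}^{s}(a_i \otimes b_i)$ with each $a_i \otimes b_i$ simple. Applying $\sigma$ and using additivity together with the identity $(a_i \otimes b_i)^\sigma = a_i^\sigma \otimes b_i^\sigma$ gives
\[
v^\sigma = \sum_{i=1}^{s}(a_i^\sigma \otimes b_i^\sigma),
\]
which exhibits $v^\sigma$ as a sum of $s$ simple vectors. Hence $\wt{v^\sigma} \le s = \wt{v}$.

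To obtain the reverse inequality, apply the same argument with $\sigma$ replaced by $\sigma^{-1} \in \Aut{\F{q}}$ and with the vector $v^\sigma$ in place of $v$: this yields $\wt{(v^\sigma)^{\sigma^{-1}}} \le \wt{v^\sigma}$. Since $(v^\sigma)^{\sigma^{-1}} = v$, we conclude $\wt{v} \le \wt{v^\sigma}$, and therefore $\wt{v^\sigma} = \wt{v}$. There is no real obstacle here; the only point one must be careful about is verifying that the definition of the $\sigma$-action given above is genuinely additive and respects the simple-tensor structure, which is already spelled out in the displayed formula for $v^\tau$ preceding the lemma statement.
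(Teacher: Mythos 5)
Your argument is correct: the two facts you isolate (that $\sigma$ is additive in the coordinates of the tensor product basis and sends simple vectors to simple vectors, since $(\lambda_i\mu_j)^\sigma=\lambda_i^\sigma\mu_j^\sigma$) give $\wt{v^\sigma}\le\wt{v}$, and applying this to $\sigma^{-1}$ yields equality. The paper omits the proof as ``straightforward,'' and yours is exactly the intended argument.
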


Assume first that $H = \GamL{n,q}$. As in the previous section, the $G_0$-orbits in $V^\#$ are the same as the $L$-orbits. This follows easily from Lemma \ref{lem:tensorwt-aut} and the results in \cite{HAcase}.

\begin{lemma} \label{lem:C4orbits}
Let $G_0$ be as in case {\rm($\C_4$)} of Theorem {\rm\ref{thm:Asch-GL}}. Then the $G_0$-orbits in $V^\#$ are the sets $Y_s$ for each $s \in \{1, \ldots, \min{k,m}\}$, where
	\begin{equation} \label{eq:Y_s}
	Y_s := \left\{ v \in V^\# \ \big| \ \wt{v} = s \right\}.
	\end{equation}
\end{lemma}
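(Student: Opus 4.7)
The plan is to mirror exactly the argument used for Lemma~\ref{lem:C2orbits}, exploiting the fact that the semilinear ``twist'' $\tau$ adds nothing to the orbit structure once the linear part $L = G_0 \cap \GL{n,q} = \GL{k,q} \otimes \GL{m,q}$ is understood. Concretely, I would fix $s \in \{1,\ldots,\min\{k,m\}\}$, take an arbitrary $v \in Y_s$, and prove $v^{G_0} = Y_s$ by establishing the two inclusions separately.

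For the inclusion $v^{G_0} \subseteq Y_s$, the key point is that tensor weight is preserved by every element of $G_0$. The $L$-part preserves weight because elements of $L$ act on simple tensors by $(u \otimes w)^{(g,h)} = u^g \otimes w^h$, which takes simple vectors to simple vectors and therefore cannot increase (nor, by applying the inverse, decrease) the minimal number of simple summands needed to express $v$. The $\langle\tau\rangle$-part preserves weight by Lemma~\ref{lem:tensorwt-aut}. Since $G_0 = L \rtimes \langle\tau\rangle$, every element of $G_0$ preserves tensor weight, giving the inclusion.

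For the reverse inclusion $Y_s \subseteq v^{G_0}$, I would invoke the transitivity of $L$ on $Y_s$, which is precisely the content of the analogue in \cite{HAcase} already cited in the excerpt preceding the statement (the same reference from which the bound $\wt{v} \leq \min\{k,m\}$ is taken, namely \cite[Lemma~3.3]{HAcase}). That reference gives $v^L = Y_s$; since $L \leq G_0$, we immediately obtain $Y_s = v^L \subseteq v^{G_0}$. Combining the two inclusions yields $v^{G_0} = Y_s$, and as $s$ ranges over $\{1,\ldots,\min\{k,m\}\}$ the sets $Y_s$ partition $V^\#$, so these are exactly the $G_0$-orbits.

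There is no real obstacle here: the statement is essentially a corollary of the previously established $L$-orbit classification and the automorphism-invariance of tensor weight. The only thing to be careful about is verifying that $\tau$ is well-defined on $V = U \otimes W$ with the declared action (fixing each basis vector $u_i \otimes w_j$) and that this genuinely gives the semilinear action inherited from $V = \F{q}^n$, so that Lemma~\ref{lem:tensorwt-aut} applies to it; this is already built into the setup at the start of Section~\ref{sec:C4}.
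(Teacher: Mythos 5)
Your proposal is correct and follows essentially the same route as the paper, which proves the lemma by combining Lemma~\ref{lem:tensorwt-aut} (invariance of tensor weight under field automorphisms) with the $L$-orbit classification from \cite[Lemmas 3.3 and 3.4]{HAcase}. Your write-up simply makes explicit the two inclusions that the paper's one-line citation leaves implicit.
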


\begin{proof}
This is a consequence of Lemma \ref{lem:tensorwt-aut} above, and of \cite[Lemmas 3.3 and 3.4]{HAcase}.
\end{proof}

We then obtain the same graphs as those in \cite[Proposition 3.5]{HAcase}.

\begin{proposition} \label{prop:C4-diam2}
Let $\Gamma$ be a graph and $G \leq \Aut{\Gamma}$ such that $G$ satisfies Hypothesis {\rm\ref{hyp:HA}} with $G_0$ as in case {\rm($\C_4$)} of Theorem {\rm\ref{thm:Asch-GL}}, where $k$ and $m$ may be equal. Then $\Gamma$ is $G$-symmetric with diameter $2$ if and only if $\Gamma \cong \Cay{V,Y_s}$, where $s \geq \frac{1}{2}\min{k,m}$ and $Y_s$ is as in {\rm(\ref{eq:Y_s})}.
\end{proposition}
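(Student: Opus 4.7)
The plan is to reduce the statement to the corresponding result for $L$-actions already established in \cite[Proposition 3.5]{HAcase}. By Lemma \ref{lem:HAgraphs}, a graph $\Gamma$ is $G$-symmetric with diameter $2$ if and only if $\Gamma \cong \Cay{V,S}$ for some $G_0$-orbit $S$ on $V^\#$ satisfying $-S = S$, $S \subsetneq V$, and $V = S \cup (S+S)$. Lemma \ref{lem:C4orbits} identifies the $G_0$-orbits on $V^\#$ as exactly the tensor-weight classes $Y_s$ for $1 \leq s \leq \min{k,m}$. Each such orbit satisfies $-Y_s = Y_s$, because $v \mapsto -v$ is induced by $(-I_k) \otimes I_m$, which lies in the $\GL{k,q} \otimes \GL{m,q}$ factor of $G_0$ and preserves tensor weight. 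Since $k, m \geq 2$ there are at least two distinct weight classes, so $Y_s \subsetneq V^\#$ for every admissible $s$ and the condition $S \subsetneq V$ is automatic.

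It therefore suffices to characterise those $s$ for which $V = Y_s \cup (Y_s + Y_s)$. This is a purely linear condition on tensor weights and, by Lemma \ref{lem:tensorwt-aut}, does not involve the Frobenius factor $\langle \tau \rangle$ at all; in particular the $L$-orbits on $V^\#$ coincide with the $G_0$-orbits, both being the sets $Y_s$. Consequently the condition is identical to that analysed in \cite[Proposition 3.5]{HAcase}, where it is shown to be equivalent to $s \geq \tfrac{1}{2}\min{k,m}$. Invoking that result, together with the converse half of Lemma \ref{lem:HAgraphs}, yields both directions of the equivalence and identifies the connection sets.

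The substantive content sits in the cited proposition, so the hard work is effectively already done. The genuinely subtle step there is the ``if'' direction: subadditivity of tensor weight immediately confines $Y_s + Y_s$ to vectors of weight at most $2s$, giving the ``only if'' direction since vectors of every weight up to $\min{k,m}$ exist. Conversely, given $v$ of weight $r \leq 2s$ one must exhibit $v = v_1 + v_2$ with $\wt{v_1} = \wt{v_2} = s$. When $s \leq r \leq 2s$ a minimal expression $v = \sum_{i=1}^{r} a_i \otimes b_i$ splits naturally, but in the range $r < s$ one must pad with cancelling simple tensors chosen so that linear independence prevents either summand from collapsing below weight $s$; this is the only place where real care is required.
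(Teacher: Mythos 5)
Your proposal is correct and follows the same route as the paper: the paper's proof is likewise a direct combination of Lemma \ref{lem:C4orbits} (identifying the $G_0$-orbits with the sets $Y_s$) and \cite[Proposition 3.5]{HAcase} (the weight-threshold criterion $s \geq \frac{1}{2}\min{k,m}$), with Lemma \ref{lem:HAgraphs} supplying the Cayley-graph framework. Your additional remarks verifying $-Y_s = Y_s$ and $Y_s \subsetneq V$ are sound but are left implicit in the paper.
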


\begin{proof}
This follows immediately from Lemma \ref{lem:C4orbits} and \cite[Propositon 3.5]{HAcase}.
\end{proof}

Now assume that $H = \GamSp{n,q}$. In this case $k$ is even, $m \geq 3$, $q$ is odd, and $\phi = \phi_U \otimes \phi_W$, where $\phi_U$ is a symplectic form on $U$ and $\phi_W$ is a nondegenerate symmetric bilinear form on $W$. We can choose $B_U$ and $B_W$ appropriately so that $B$ is a symplectic basis and hence we can again choose $\tau$ to fix each of the vectors $u_i \otimes w_j$. The $G_0$-orbits in this case are proper subsets of the sets $Y_s$ in (\ref{eq:Y_s}), and are in general rather difficult to describe, as are the $L$-orbits. For instance, if $v = \sum_{i=1}^s{a_i \otimes b_i} \in Y_s$, it is easy to see that
	\[ v^{G_0} = \left\{ \sum_{i=1}^s{a'_i \otimes b'_i} \ \Big| \ a'_i \in U^\#, \ b'_i \in b_i^{{\rm GO}^\epsilon(m,q)} \right\}. \]
If $s = 1$ then the set $Y_1$ of simple vectors splits into the $G_0$-orbits $Y_1^\theta$, where $\theta \in \{0,\#\}$ if $m$ is even and $\theta \in \{0,\square,\boxtimes\}$ if $m$ is odd, and
	\[ Y_1^\theta := \left\{ a \otimes b \ \big| \ a \in U^\#, \ b \in S_\theta \right\}. \]
If $s > 1$ suppose that exactly $r$ of the vectors $b_i$ belong in $S_\#$ for some $r$, $0 \leq r \leq s$; if $m$ is odd suppose further that exactly $r_\square$ belong in $S_\square$ and $r_\boxtimes$ in $S_\boxtimes$. If $m$ is even then $v^{G_0} \subset Y_s^r$, where
	\[ Y_s^r := \left\{ \sum_{i=1}^s{a'_i \otimes b'_i} \in Y_s \ \Big| \ \text{exactly $r$ of the vectors $b'_i$ are in $S_\#$} \right\}, \]
and if $m$ is even then $v^{G_0} \subset Y_s^{r_\square,r_\boxtimes}$, where
	\[ Y_s^{r_\square,r_\boxtimes} := \left\{ \sum_{i=1}^s{a'_i \otimes b'_i} \in Y_s \ \Big| \ \text{exactly $r_\theta$ of the vectors $b'_i$ are in $S_\theta$ for $\theta \in \{\square,\boxtimes\}$} \right\}. \]
The sets $Y_s^r$ and $Y_s^{r_\square,r_\boxtimes}$ above are, in general, not $G_0$-orbits. For instance, if $s = 2$, the weight-2 vectors $a_1 \otimes b_1 + a_2 \otimes b_2, a'_1 \otimes b'_1 + a'_2 \otimes b'_2 \in Y_2^0$ (or $Y_2^{0,0}$ if $m$ is even), such that $b_1 \perp b_2$ and $b'_1 \not\perp b'_2$, belong to different $G_0$-orbits.

The following is an easy consequence of the preceding discussion. However, as discussed, we do not have a good description of the $G_0$-orbits.

\begin{proposition} \label{prop:C4-diam2-Sp}
Let $\Gamma$ be a graph and $G \leq \Aut{\Gamma}$ such that $G$ satisfies Hypothesis {\rm\ref{hyp:HA}} with $G_0$ as in case {\rm($\C_4$)} of Theorem {\rm\ref{thm:Asch-Sp}}, where $k$ and $m$ may be equal. If $\Gamma$ is $G$-symmetric with diameter $2$, then $\Gamma \cong \Cay{V,S}$ where $S = v^{G_0}$ for some $v \in Y_s$, where $Y_s$ is as in {\rm(\ref{eq:Y_s})} and $s \geq \frac{1}{2}\min{k,m}$.
\end{proposition}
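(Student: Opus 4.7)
The plan is to combine Lemma \ref{lem:HAgraphs} with the observation that $G_0$ preserves tensor weight, and then to invoke the counting bound from the analogous $\GamL$ analysis to force the lower bound on $s$.

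First I would apply Lemma \ref{lem:HAgraphs}: since $\Gamma$ is $G$-symmetric of diameter two and $G$ is of type HA, $\Gamma \cong \Cay{V,S}$ for some $G_0$-orbit $S \subseteq V^\#$ satisfying $-S = S$ and $V = S \cup (S+S)$. Writing $S = v^{G_0}$ for some $v \in V^\#$, it remains to show that $S \subseteq Y_s$ and that $s \geq \tfrac{1}{2}\min{k,m}$.

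For the first of these, I would verify that every element of $G_0$ preserves tensor weight with respect to the decomposition $V = U \otimes W$. By case $(\C_4)$ of Theorem \ref{thm:Asch-Sp}, $G_0 \leq \left(\GSp{k,q} \times {\rm GO}^\epsilon(m,q)\right) \rtimes \langle \tau \rangle$; the intersection with $\GL{n,q}$ acts on $V$ through the tensor product image $\GL{k,q} \otimes \GL{m,q}$, which preserves tensor weight (see the argument in \cite[Lemmas 3.3 and 3.4]{HAcase}), while $\tau$ preserves it by Lemma \ref{lem:tensorwt-aut}. With $s := \wt{v}$ this yields $S = v^{G_0} \subseteq Y_s$.

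For the bound on $s$, I would apply the inequality $|V| \leq |S|^2 + 1$ noted immediately after Lemma \ref{lem:HAgraphs}, which follows from $V = S \cup (S+S)$ together with $-S = S$. Since $|S| \leq |Y_s|$, this gives $q^{km} \leq |Y_s|^2 + 1$, which is precisely the necessary cardinality condition established in the only-if direction of Proposition \ref{prop:C4-diam2} in order to force $s \geq \tfrac{1}{2}\min{k,m}$; hence the same threshold must hold in the symplectic setting. The one point that deserves verification is the weight-preservation claim of the second step: one must confirm that the semisimilarity factors $\GSp{k,q}$ and ${\rm GO}^\epsilon(m,q)$, which preserve their respective forms only up to nonzero scalars, nevertheless induce weight-preserving transformations on $V$. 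This is immediate from their inclusion in the image of the tensor product map $\GL{k,q} \times \GL{m,q} \to \GL{n,q}$, so there is no real obstacle; the argument is essentially a reduction to the known $\GamL$ case.
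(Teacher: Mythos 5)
Your first two steps are sound and match the paper's route: Lemma \ref{lem:HAgraphs} gives $\Gamma \cong \Cay{V,S}$ with $S = v^{G_0}$ and $V = S \cup (S+S)$, and since $\GSp{k,q} \otimes {\rm GO}^\epsilon(m,q) \leq \GL{k,q} \otimes \GL{m,q}$ and $\tau$ preserves tensor weight (Lemma \ref{lem:tensorwt-aut}), the orbit $S$ lies inside $Y_s$ with $s = \wt{v}$ --- this is exactly the content of the discussion preceding the proposition.

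The gap is in your third step. The cardinality inequality $q^{km} \leq |Y_s|^2 + 1$ is \emph{not} the mechanism by which the only-if direction of Proposition \ref{prop:C4-diam2} forces $s \geq \frac{1}{2}\min{k,m}$, and it cannot be: for $k = m = 3$ and $q = 2$ one has $|Y_1| = 49$, so $|Y_1|^2 + 1 = 2402 > 512 = |V|$, yet $s = 1 < \frac{3}{2}$. So your inequality is satisfied below the threshold and yields nothing. The bound actually comes from subadditivity of tensor weight: every element of $S + S$ is a sum of two weight-$s$ vectors and hence has weight at most $2s$, while $V^\#$ contains vectors of weight $\min{k,m}$ (by \cite[Lemma 3.3]{HAcase}); if $2s < \min{k,m}$ these vectors lie outside $S \cup (S+S)$, contradicting $V = S \cup (S+S)$. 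Equivalently, since $S \subseteq Y_s$ you have $S \cup (S+S) \subseteq Y_s \cup (Y_s + Y_s)$, so $\Cay{V,Y_s}$ also has diameter at most $2$ and Proposition \ref{prop:C4-diam2} gives $s \geq \frac{1}{2}\min{k,m}$ directly. With that replacement your argument coincides with the paper's.
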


\begin{proof}
This follows immediately from the discussion above together with Proposition \ref{prop:C4-diam2}.
\end{proof}
\subsection{Class $\C_5$} \label{sec:C5}

In this case $n \geq 2$, $d/n$ is composite with a prime divisor $r$, and $V$ has a fixed ordered basis
	\begin{equation*} 
	\mathcal{B} := (v_1, \ldots, v_n).
	\end{equation*}
Let $q_0 := q^{1/r}$ and let $\F{q_0}$ denote the subfield of $\F{q}$ of index $r$. Let $V_0$ be the $\F{q_0}$-span of $\mathcal{B}$. Then $V_0$ is a vector space over $\F{q_0}$ that is contained in $V$, but $V_0$ is not an $\F{q}$-subspace of $V$.

To any $v = \sum_{i=1}^n{\alpha_i v_i} \in V$ we can associate the $\F{q_0}$-subspace $D_v$ of $\F{q}$, where
	\begin{equation} \label{eq:D_v}
	D_v := \langle \alpha_1, \ldots, \alpha_n \rangle_{\F{q_0}}.
	\end{equation}
Set
	\begin{equation} \label{eq:c(v)}
	c(v) := \Dim{D_v}{\F{q_0}},
	\end{equation}
and note that $c(v) \leq \min{r,n}$. For any $\lambda \in \F{q}$ it is clear that $D_{\lambda v} = \lambda D_v$, so $c(\lambda v) = c(v)$, and it is also easy to show that $c(v^\sigma) = c(v)$ for any $\sigma \in \Aut{\F{q}}$. Let
	\[ [D_v] := \{ \lambda D_v \ | \ \lambda \in \F{q}^\# \}, \]
and observe that $D_u \in [D_{v^\sigma}]$ if and only if $D_u = \lambda D_{v^\sigma} = \left(\lambda^{\sigma^{-1}} D_v\right)^\sigma$ for some $\lambda \in \F{q}^\#$. Hence $D_{u^{\sigma^{-1}}} = (D_u)^{\sigma^{-1}} = \lambda^{\sigma^{-1}} D_v$, so that $D_{u^{\sigma^{-1}}} \in [D_v]$. Thus $[D_{v^\sigma}] = [D_v]^\sigma$.


\subsubsection{Case $H = \GamL{n,q}$}

By Theorem \ref{thm:Asch-GL}
	\[ G_0 = (\GL{n,q_0} \circ Z_{q-1}) \rtimes \langle \tau \rangle \]
and $L = \GL{n,q_0} \circ Z_{q-1}$.

Regard the field $\F{q}$ as a vector space of dimension $r$ over $\F{q_0}$, and for any $a \in \{1, \ldots, r\}$, define
	\begin{equation} \label{eq:fieldofD}
	\K{a} := \begin{cases}
	           \F{q} \ &\text{if $a = r$}, \\
		   \F{q_0} &\text{otherwise.}
	           \end{cases}
	\end{equation}
For $a \in \{1, \ldots, r\}$ define
	\begin{equation} \label{eq:eta} \renewcommand{\arraystretch}{0.8}
	\eta(a) :=
	\frac{\left[ \begin{array}{c}
	r \\ a
	\end{array} \right]_{q_0}}{\big|\F{q}^\# : \K{a}^\#\big|},
	\end{equation}
where
	\[ \renewcommand{\arraystretch}{0.8}
	\left[ \begin{array}{c}
	r \\ a
	\end{array} \right]_{q_0} := \prod_{i=0}^{a-1}{\frac{q_0^r - q_0^i}{q_0^a - q_0^i}}, \]
the number of $a$-dimensional subspaces of $\F{q_0}^r$. In particular $\eta(r) = \eta(1) = 1$. Lemma \ref{lem:K-eta} gives some elementary observations about $\K{a}$ and $\eta$, whose significance will be apparent in Corollary \ref{cor:numberoforbits}. The proof of Lemma \ref{lem:K-eta} is straightforward and is omitted.

\begin{lemma} \label{lem:K-eta}
Let $\F{q_0}$ be a proper nontrivial subfield of $\F{q}$ with prime index $r$, and suppose that $\F{q}$ is viewed as a vector space over $\F{q_0}$ with dimension $r$. For any $a \in \{1, \ldots, r\}$, let $\mathcal{D}$ denote the set of all $\F{q_0}$-subspaces of $\F{q}$ with dimension $a$, and let $\K{a}$ and $\eta(a)$ be as defined in {\rm(\ref{eq:fieldofD})} and {\rm(\ref{eq:eta})}, respectively. Then the following hold:
	\begin{enumerate}
	\item For any $D \in \mathcal{D}$,
		\[ \{ \lambda \in \F{q} \ | \ \lambda D = D \} = \K{a}. \]
	\item For any $D \in \mathcal{D}$, the sets $[D] = \big\{ \lambda D \ | \ \lambda \in \F{q}^\# \big\}$ partition $\mathcal{D}$. Moreover, $|[D]| = \big|\F{q}^\# : \K{a}^\#\big|$, and the number of distinct parts $[D]$ in $\mathcal{D}$ is $\eta(a)$.
	\end{enumerate}
\end{lemma}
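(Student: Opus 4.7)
The plan is to deduce both parts from the multiplicative action of $\F{q}^\#$ on the set $\mathcal{D}$ of $a$-dimensional $\F{q_0}$-subspaces of $\F{q}$, with the primality of $r$ as the decisive ingredient. For part (1), I would first dispose of the case $a = r$: since $\Dim{\F{q}}{\F{q_0}} = r$, the unique element of $\mathcal{D}$ is $\F{q}$ itself, and $\lambda \F{q} = \F{q}$ for every $\lambda \in \F{q}^\#$, matching $\K{r} = \F{q}$. For $1 \leq a < r$, the inclusion $\F{q_0}^\# \subseteq \{\lambda \in \F{q}^\# : \lambda D = D\}$ is immediate since $D$ is an $\F{q_0}$-subspace. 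For the reverse inclusion, if $\lambda \in \F{q}^\#$ satisfies $\lambda D = D$, then $D$ is stable under multiplication by every element of the subring $\F{q_0}[\lambda] \subseteq \F{q}$. The tower $\F{q_0} \subseteq \F{q_0}[\lambda] \subseteq \F{q}$, combined with the primality of $r = [\F{q}:\F{q_0}]$, forces $\F{q_0}[\lambda] \in \{\F{q_0}, \F{q}\}$; the second option would yield $\F{q} \cdot d \subseteq D$ for any nonzero $d \in D$, hence $\Dim{D}{\F{q_0}} \geq r$, contradicting $a < r$. So $\lambda \in \F{q_0}$, giving equality.

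For part (2), the sets $[D] = \{\lambda D \mid \lambda \in \F{q}^\#\}$ are by construction the orbits of the multiplicative action of $\F{q}^\#$ on $\mathcal{D}$, and therefore partition $\mathcal{D}$. The orbit--stabiliser theorem, together with part (1), immediately yields $|[D]| = |\F{q}^\#|/|\K{a}^\#| = |\F{q}^\# : \K{a}^\#|$, a value independent of $D$. Since the total number of $a$-dimensional $\F{q_0}$-subspaces of $\F{q}$ is the Gaussian binomial $\left[ \begin{array}{c} r \\ a \end{array} \right]_{q_0}$, dividing by the common orbit size yields $\eta(a)$ orbits, exactly as defined.

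The only substantive step in the whole argument is the reverse inclusion in part (1); everything else is orbit bookkeeping. The main (and only) obstacle is to recognise that $\F{q_0}[\lambda]$ is a subfield sandwiched between $\F{q_0}$ and $\F{q}$, so that the primality of $r$ collapses the tower and forces $\lambda \in \F{q_0}$ once $\dim D < r$. This is a well-known intermediate-field argument, so no real difficulty is anticipated.
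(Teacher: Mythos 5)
Your proof is correct. The paper omits the proof of this lemma as ``straightforward'', and your argument -- the intermediate-field/primality argument for the stabiliser in part (1), followed by orbit--stabiliser bookkeeping for part (2) -- is precisely the standard argument the authors had in mind; the only cosmetic remark is that the stabiliser set in the statement should implicitly exclude $\lambda = 0$ (as you do), since $0\cdot D \neq D$.
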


The main result for this case, which relies on the value of the parameter $c(v)$, is the following. It shows that examples do exist.

\begin{proposition} \label{prop:C5main}
Let $\Gamma$ be a graph and $G \leq \Aut{\Gamma}$ such that $G$ satisfies Hypothesis {\rm\ref{hyp:HA}} with $H = \GamL{n,q}$ and $i = 5$. Then $\Gamma$ is connected and $G$-symmetric if and only if $\Gamma \cong \Cay{V,v^{G_0}}$ for some $v \in V^\#$. Moreover, if $D_v$ and $c(v)$ are as in {\rm(\ref{eq:D_v})} and {\rm(\ref{eq:c(v)})}, respectively, then the following hold.
	\begin{enumerate}
	\item If $c(v) = r$ or $c(v) = r-1$ then $\diam{\Gamma} = 2$.
	\item If $c(v) = 1$ then $\diam{\Gamma} = \min{n,r}$. In particular $\diam{\Gamma} = 2$ if and only if $n = 2$ or $r = 2$.
	\item If $2 \leq c(v) < \frac{1}{2} \min{n,r}$ then $\diam{\Gamma} > 2$.
	\item Let $\eta$ be as defined in {\rm(\ref{eq:eta})}, $s$ be the largest divisor of $d/n$ with $s \leq \eta(c(v))$, and
		\begin{equation*} \label{eq:k1(q_0)}
		k_1(q_0) := \begin{cases} 18s/17 \ &\text{if } q_0 = 2; \\ s - 5/4 &\text{if } q_0 > 2. \end{cases} 
		\end{equation*}
	If $3 \leq n < r$ and $n/2 \leq c(v) < (r(n-2) + k_1(q_0))/(2n)$, then $\diam{\Gamma} > 2$.
	\end{enumerate}
\end{proposition}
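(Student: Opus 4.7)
My plan is to invoke Lemma \ref{lem:HAgraphs} to reduce to Cayley graphs, parametrise the $G_0$-orbits by the invariant $[D_v]^{\langle\tau\rangle}$, and then handle the four diameter claims individually. Since $-1 \in Z_{q-1} \leq G_0$, every orbit $S = v^{G_0}$ satisfies $S = -S$; and since $G_0$ is irreducible on $V$ by Hypothesis \ref{hyp:HA}, the $\F{q}$-span of $v^{G_0}$ is a nonzero $G_0$-invariant subspace, hence equal to $V$. Thus by Lemma \ref{lem:HAgraphs} the connected $G$-symmetric graphs are exactly those of the form $\Cay{V, v^{G_0}}$. A direct computation with the $G_0$-action on $v = \sum_i \alpha_i v_i$ shows that $\GL{n,q_0}$ preserves $D_v$, that scalars send $D_v$ to $\lambda D_v$, and that $\tau$ sends $D_v$ to $D_v^\tau$; hence the $L$-orbit of $v$ is $\{w \in V^\# : [D_w] = [D_v]\}$ and the $G_0$-orbit is determined by $(c(v), [D_v]^{\langle\tau\rangle})$. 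Counting surjective $\F{q_0}$-linear maps $\F{q_0}^n \to D_v$ yields
\[
|v^{G_0}| \;=\; s \cdot |[D_v]| \cdot \prod_{i=0}^{c-1}(q_0^n - q_0^i),
\]
with $c = c(v)$, $|[D_v]|$ as in Lemma \ref{lem:K-eta}, and $s$ the length of the $\langle \tau \rangle$-orbit on $[D_v]$; by Lemma \ref{lem:K-eta} this $s$ divides $d/n$ and is at most $\eta(c)$.

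For claim (2), $[D_v]$ is the full set of one-dimensional $\F{q_0}$-subspaces of $\F{q}$ and so $v^{G_0} = \F{q}^\# V_0^\#$, where $V_0 = \Span{\F{q_0}}{\mathcal{B}}$. Identifying $V = V_0 \otimes_{\F{q_0}} \F{q}$, the coefficient matrix of any $u \in V$ over $\F{q_0}$ has rank exactly $c(u)$, so the tensor rank of $u$ (the minimum number of simple tensors summing to $u$, and hence the distance from $0$ to $u$ in $\Cay{V, v^{G_0}}$) equals $c(u)$. Therefore $\diam{\Gamma} = \max c(u) = \min{n,r}$, which is $2$ iff $n = 2$ or $r = 2$. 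For claim (1), whether $c = r$ or $c = r-1$, the set $[D_v]$ already comprises every $c$-dimensional $\F{q_0}$-subspace of $\F{q}$ and is $\tau$-stable, so $v^{G_0} = \{w \in V^\# : c(w) = c\}$. To show $\diam{\Gamma} = 2$ I would, for any $u \in V$ with $c(u) \neq c$, find $w_1 \in v^{G_0}$ with $u - w_1 \in v^{G_0}$ by a counting argument: the number of ``bad'' $w_1$ (those with $c(u - w_1) \neq c$) is at most $|\{w \in V^\# : c(w) \neq c\}|$, which is strictly less than $|v^{G_0}|$ when $c \in \{r-1, r\}$ by direct verification using the orbit-size formula; the case $u = 0$ is handled by $w + (-w)$.

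Claims (3) and (4) both rest on the necessary condition $q^n \leq |v^{G_0}|^2 + 1$ from Lemma \ref{lem:HAgraphs}. Using $s \cdot |[D_v]| \leq \left[\begin{array}{c} r \\ c \end{array}\right]_{q_0}$ and $\prod_{i=0}^{c-1}(q_0^n - q_0^i) < q_0^{nc}$, the failure of this inequality reduces to an arithmetic condition on $c, n, r, q_0$: in the range $2 \leq c < \frac{1}{2}\min{n,r}$ one verifies $2c(n + r - c) < rn$ directly, giving (3). Claim (4) treats the regime $3 \leq n < r$ where that crude estimate is insufficient; one then uses the sharper bound on $\left[\begin{array}{c} r \\ c \end{array}\right]_{q_0}$ together with the divisibility $s \mid d/n$ and $s \leq \eta(c)$, and the constant $k_1(q_0)$ absorbs the lower-order corrections, which are largest when $q_0 = 2$.

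The main obstacle I foresee is claim (1) for $c = r-1$: the orbit is strictly smaller than for $c = r$, so the naive counting bound $|v^{G_0}| > q^n/2$ can fail for small $q_0$, and an explicit construction (for instance, choosing $w_1$ so that $D_{w_1}$ is an $\F{q_0}$-hyperplane of $\F{q}$ prescribed relative to $D_u$) may be needed in borderline cases. The other delicate point is the calibration of $k_1(q_0)$ in claim (4), especially in the $q_0 = 2$ case where the $18s/17$ bound must be extracted from the exact value of $\left[\begin{array}{c} r \\ c \end{array}\right]_{q_0}$ rather than its asymptotic.
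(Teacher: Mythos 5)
Your reduction to Cayley graphs, your orbit description, and your treatment of claim (2) via the rank of the $\F{q_0}$-coefficient matrix all match the paper. The serious problem is claim (3). You propose to derive it from the counting obstruction $q^n \leq |v^{G_0}|^2+1$ of Lemma \ref{lem:HAgraphs}, asserting that $2c(n+r-c) < rn$ throughout the range $2 \leq c < \frac{1}{2}\min{n,r}$. That inequality is false near the top of the range: for $n=r$ and $c$ close to $n/2$ the left side is close to $\tfrac{3}{2}n^2 > n^2 = rn$ (concretely, $n=r=10$, $c=4$ gives $128 > 100$, and indeed $|v^{G_0}|^2 \approx q_0^{128} \gg q_0^{100} = |V|$). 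So in exactly this regime the necessary condition of Lemma \ref{lem:HAgraphs} is satisfied and counting yields no conclusion, yet the proposition still asserts $\diam{\Gamma}>2$. The paper's argument is structural, not numerical: if $w = x+y$ with $x,y \in v^{G_0}$ then $D_w$ lies in the span of the two translated-and-twisted bases of $D_x$ and $D_y$, so $c(w) \leq 2c(v)$ (Lemma \ref{lem:C5diam2}(1)); since some $u \in V^\#$ has $c(u) = \min{n,r}$, diameter $2$ forces $2c(v) \geq \min{n,r}$. This subadditivity of $c$ is the missing idea; it is implicit in your tensor-rank picture from claim (2) but you never invoke it for claim (3).

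Claim (1) has a related gap. Your counting argument needs the orbit to occupy more than half of $V$, and this fails not merely in "borderline cases" of $c = r-1$ but already for $c = r$ whenever $q_0 = 2$ and $n$ is near $r$: for $q_0=2$, $n=r=3$ the full-rank orbit has size $(8-1)(8-2)(8-4)=168 < 256 = |V|/2$ (asymptotically the full-rank proportion over $\F{2}$ is about $0.29$). So the explicit construction you defer to is actually the main content of the proof, and you do not supply it. The paper's route is: (i) show that $D_w < D_v$ implies $w \in v^{G_0}+v^{G_0}$ by writing $w = x+y$ with $D_x = D_y = D_v$ via a telescoping choice of coefficients (Lemma \ref{lem:C5diam2}(2)) --- this settles $c(v)=r$ completely and all $w$ with $c(w)<r-1$ when $c(v)=r-1$; and (ii) for $c(v)=r-1$ and $c(w)=r$, exhibit an explicit pair $x,y$ of weight-$(r-1)$ vectors with $c(x+y)=r$ and use transitivity of $G_0$ on $\{w : c(w)=r\}$. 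Your sketch for claim (4) is broadly in the spirit of the paper's counting argument, but since it leans on the same size estimates, it would need the more careful bookkeeping (bounding $|[D_v]|$ by $|\F{q}^\#:\F{q_0}^\#|$ and $s$ by $q_0^{st}$) that produces the stated constant $k_1(q_0)$.
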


The cases not covered by Proposition \ref{prop:C5main} are discussed briefly at the end of the section. The proof of Proposition \ref{prop:C5main} is given after Lemma \ref{lem:C5diam2}, and relies on several intermediate results. We begin by describing the $\GL{n,q_0}$-orbits in terms of the subspaces $D_v$, which in turn leads to a description of the $G_0$-orbits in $V^\#$.

\begin{lemma} \label{lem:GLorbits}
For any $v \in V^\#$ let $D_v$ and $c(v)$ be as in {\rm(\ref{eq:D_v})} and {\rm(\ref{eq:c(v)})}, respectively, and let $\mathcal{U}$ denote the set of all $\F{q_0}$-independent $c(v)$-tuples in $V_0$. Then for any fixed $\F{q_0}$-basis $\{\beta_1, \ldots, \beta_{c(v)}\}$ of $D_v$,
	\begin{align*}
	v^{\GL{n,q_0}} &= \left\{ \sum_{i=1}^{c(v)}{\beta_i u_i} \ \Big| \ (u_1, \ldots, u_{c(v)}) \in \mathcal{U} \right\} \\
	&= \left\{ u \in V^\# \ \big| \ D_u = D_v \right\}.
	\end{align*}
\end{lemma}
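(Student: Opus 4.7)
The plan is to establish both equalities by chasing a cycle of inclusions: first that $v^{\GL{n,q_0}} \subseteq \{u \in V^\# : D_u = D_v\}$, then that every $u$ with $D_u = D_v$ admits a presentation $\sum_i \beta_i u_i$ with $(u_1, \ldots, u_{c(v)}) \in \mathcal{U}$, and finally that every such presentation lies in $v^{\GL{n,q_0}}$.

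For the first inclusion I would exploit the fact that $\GL{n,q_0}$ acts $\F{q}$-linearly on $V$: if $g \in \GL{n,q_0}$ satisfies $v_i^g = \sum_j a_{ij} v_j$ with $a_{ij} \in \F{q_0}$, then for $v = \sum_i \alpha_i v_i$ with $\alpha_i \in \F{q}$ the vector $v^g$ has $j$-th coefficient $\sum_i \alpha_i a_{ij}$, which is an $\F{q_0}$-linear combination of $\alpha_1, \ldots, \alpha_n$. Hence $D_{v^g} \leq D_v$, and applying $g^{-1}$ gives the reverse inclusion, so $D_{v^g} = D_v$ for every $g \in \GL{n,q_0}$.

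Next I would show that any $u \in V^\#$ with $D_u = D_v$ can be written as $u = \sum_{i=1}^{c(v)} \beta_i u_i$ with $(u_1, \ldots, u_{c(v)}) \in \mathcal{U}$. Write $u = \sum_{j=1}^n \alpha_j v_j$; since each $\alpha_j \in D_u = D_v$ has a unique expansion $\alpha_j = \sum_i c_{ji} \beta_i$ with $c_{ji} \in \F{q_0}$, the vectors $u_i := \sum_j c_{ji} v_j \in V_0$ satisfy $u = \sum_i \beta_i u_i$ by a direct rearrangement. The one step requiring genuine verification is the $\F{q_0}$-independence of the $u_i$: the $\F{q_0}$-linear injection $\F{q_0}^{c(v)} \to \F{q}$ sending the $i$-th coordinate to $\beta_i$ carries the rows of the matrix $C := (c_{ji})$ to $\alpha_1, \ldots, \alpha_n$, whose $\F{q_0}$-span is $D_v$ of dimension $c(v)$; hence $C$ has row rank $c(v)$ and therefore column rank $c(v)$, which says precisely that $u_1, \ldots, u_{c(v)}$ are $\F{q_0}$-independent.

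To close the cycle, take any $u = \sum_i \beta_i u_i$ with $(u_1, \ldots, u_{c(v)}) \in \mathcal{U}$, and apply the previous step to $v$ itself to obtain $v = \sum_i \beta_i u_i^{(0)}$ with $(u_1^{(0)}, \ldots, u_{c(v)}^{(0)}) \in \mathcal{U}$. Since $c(v) \leq n$, the standard transitivity of $\GL{n,q_0}$ on ordered $c(v)$-tuples of $\F{q_0}$-linearly independent vectors in $V_0 \cong \F{q_0}^n$ produces some $g \in \GL{n,q_0}$ with $(u_i^{(0)})^g = u_i$ for each $i$; then $v^g = \sum_i \beta_i (u_i^{(0)})^g = u$ by the $\F{q}$-linearity of the action, as required. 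The mildly nontrivial step is the rank argument establishing $\F{q_0}$-independence of the $u_i$; everything else is direct bookkeeping from how $\GL{n,q_0}$ acts on $\F{q}$-coefficients.
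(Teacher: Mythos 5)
Your proof is correct and follows essentially the same route as the paper's: a cycle of three inclusions, $v^{\GL{n,q_0}}\subseteq\{u: D_u=D_v\}\subseteq\{\sum_i\beta_i u_i\}\subseteq v^{\GL{n,q_0}}$, using the $\F{q_0}$-entries of the matrices for the first step and transitivity of $\GL{n,q_0}$ on independent tuples in $V_0$ for the last. The only divergence is in verifying $\F{q_0}$-independence of the $u_i$: your row-rank-equals-column-rank argument for the coefficient matrix $(c_{ji})$ is a clean, slightly more direct replacement for the paper's argument via a maximal independent subset and a change of basis, and it is equally valid.
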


\begin{proof}
Suppose that $v = \sum_{i=1}^n{\alpha_i v_i}$. Define
	\begin{equation} \label{eq:GLorbits-X}
	U := \left\{ u \in V^\# \ \big| \ D_u = D_v \right\}
	\end{equation}
and
	\begin{equation} \label{eq:GLorbits-Y}
	W := \left\{ \sum_{i=1}^{c(v)}{\beta_i u_i} \ \Big| \ (u_1, \ldots, u_{c(v)}) \in \mathcal{U} \right\}.
	\end{equation}

\emph{Claim 1: $v^{\GL{n,q_0}} \subseteq U$.} Let $g \in \GL{n,q_0}$ with matrix $\left[g_{jk}\right]$ with respect to $\mathcal{B}$. Then $v^g = \sum_{k=1}^n{\alpha'_k v_k}$, where $\alpha'_k = \sum_{j=1}^n{\alpha_j g_{jk}} \in D_v$ for each $k$. Hence $D_{v^g} \leq D_v$. Since $v$ and $g$ are arbitrary, we also have $D_v \leq D_{v^g}$. So $D_{v^g} = D_v$, and therefore $v^{\GL{n,q_0}} \subseteq U$.

\emph{Claim 2: $U \subseteq W$.} Let $u = \sum_{j=1}^n{\alpha'_j v_j} \in U$. Writing $\alpha'_j = \sum_{i=1}^{c(v)}{\beta_i \gamma_{ij}}$ for each $j$, where all $\gamma_{ij} \in \F{q_0}$, we get $u = \sum_{i=1}^{c(v)}{\beta_i u_i}$, with $u_i = \sum_{j=1}^n{\gamma_{ij} v_j} \in V_0$ for all $i$. It remains to show that the set $\mathbf{u} := \{u_1, \ldots, u_{c(v)}\}$ is $\F{q_0}$-independent. Indeed, let $\{u'_1, \ldots, u'_b\}$ be a maximal $\F{q_0}$-independent subset of $\mathbf{u}$, and extend this to an ordered $\F{q_0}$-basis $\mathcal{B}' := (u'_1, \ldots, u'_d)$ of $V_0$. Then $u = \sum_{k=1}^b{\beta'_k u'_k}$ for some $\beta'_1, \ldots, \beta'_b \in \F{q}$, and if $g \in \GL{n,q_0}$ is the change of basis matrix from $\mathcal{B}'$ to $\mathcal{B}$, then $u^g = \sum_{k=1}^b{\beta'_k v_k}$. So $D_u = D_{u^g}$ by Claim 1, and thus $b \leq c(v) = \Dim{D_u}{\F{q_0}} = \Dim{D_{u^g}}{\F{q_0}} \leq b$. Hence $b = c(v)$ and $\mathbf{u}$ is $\F{q_0}$-independent. Therefore $U \subseteq W$.

\emph{Claim 3: $W \subseteq v^{\GL{n,q_0}}$.} It is easy to see that $W$ is contained in one orbit of $\GL{n,q_0}$, and it follows from Claims 1 and 2 that $v \in W$. So $W \subseteq v^{\GL{n,q_0}}$, as claimed.

Thus we have $v^{\GL{n,q_0}} = U = W$ by Claims 1 -- 3.
\end{proof}

\begin{proposition} \label{prop:C5orbits}
For any $v \in V^\#$ let $D_v$ and $c(v)$ be as in {\rm(\ref{eq:D_v})} and {\rm(\ref{eq:c(v)})}, respectively, and let $\mathcal{U}$ be the set of all $\F{q_0}$-independent $c(v)$-tuples in $V_0$. Then for any fixed $\F{q_0}$-basis $\{\beta_1, \ldots, \beta_{c(v)}\}$ of $D_v$ we have
	\begin{align*}
	v^L &= \left\{ \lambda \sum_{i=1}^{c(v)}{\beta_i u_i} \ \Big| \ (u_1, \ldots, u_{c(v)}) \in \mathcal{U}, \, \lambda \in \F{q}^\# \right\} \\
	&= \left\{ u \in V^\# \ \big| \ D_u = \lambda D_v, \, \lambda \in \F{q}^\# \right\}
	\end{align*}
and
	\begin{align} \label{eq:C5orbits}
	v^{G_0} &= \left\{ \lambda \sum_{i=1}^{c(v)}{\beta_i^\sigma u_i} \ \Big| \ \ (u_1, \ldots, u_{c(v)}) \in \mathcal{U},\, \lambda \in \F{q}^\#, \, \sigma \in \langle \tau \rangle \right\} \notag \\
	&= \left\{ u \in V^\# \ \big| \ D_u = \lambda (D_v)^\sigma, \, \lambda \in \F{q}^\#, \sigma \in \langle \tau \rangle \right\}.
	\end{align}
\end{proposition}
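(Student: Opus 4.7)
The plan is to bootstrap from Lemma \ref{lem:GLorbits} by successively incorporating the scalar factor from $Z_{q-1}$ and then the Frobenius automorphism $\tau$. Write $L = \GL{n,q_0} \circ Z_{q-1}$, where $Z_{q-1}$ acts by scalar multiplication by elements of $\F{q}^\#$. Since scalar multiplication commutes with every element of $\GL{n,q_0}$, the $L$-orbit of $v$ equals
\[
v^L \;=\; \bigcup_{\lambda \in \F{q}^\#} (\lambda v)^{\GL{n,q_0}}.
\]
For each $\lambda \in \F{q}^\#$, the subspace $D_{\lambda v} = \lambda D_v$ has $\F{q_0}$-basis $\{\lambda\beta_1, \dots, \lambda\beta_{c(v)}\}$, so Lemma \ref{lem:GLorbits} gives
\[
(\lambda v)^{\GL{n,q_0}} \;=\; \bigl\{ u \in V^\# \ \big| \ D_u = \lambda D_v \bigr\} \;=\; \Bigl\{ \textstyle\sum_{i=1}^{c(v)} (\lambda\beta_i)\, u_i \,\Bigm|\, (u_1,\dots,u_{c(v)}) \in \mathcal{U} \Bigr\}.
\]
Factoring out $\lambda$ in the basis expansion gives exactly $\lambda \sum_{i=1}^{c(v)} \beta_i u_i$, and taking the union over $\lambda$ yields the two claimed descriptions of $v^L$.

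For the $G_0$-orbit I would use $G_0 = L \rtimes \langle \tau \rangle$, so that
\[
v^{G_0} \;=\; \bigcup_{\sigma \in \langle \tau \rangle} (v^\sigma)^L.
\]
The key observation is that, because $\tau$ acts on $V$ by $\sum \alpha_i v_i \mapsto \sum \alpha_i^\sigma v_i$ (with $g=1$ in (\ref{eq:GamL-action})), $\tau$ fixes $\mathcal{B}$ pointwise, preserves the $\F{q_0}$-space $V_0$, and satisfies $D_{v^\sigma} = (D_v)^\sigma$ with $\F{q_0}$-basis $\{\beta_1^\sigma, \dots, \beta_{c(v)}^\sigma\}$ of $(D_v)^\sigma$; recall also that $c(v^\sigma) = c(v)$, as noted in the paragraph preceding the proposition. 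Applying the $L$-orbit description just established to $v^\sigma$ in place of $v$ (and using the same index set $\mathcal{U}$, which is $\tau$-stable) gives
\[
(v^\sigma)^L \;=\; \Bigl\{ \lambda \textstyle\sum_{i=1}^{c(v)} \beta_i^\sigma u_i \,\Bigm|\, (u_1,\dots,u_{c(v)}) \in \mathcal{U},\, \lambda \in \F{q}^\# \Bigr\} \;=\; \bigl\{ u \in V^\# \,\big|\, D_u = \lambda(D_v)^\sigma,\ \lambda \in \F{q}^\# \bigr\}.
\]
Taking the union over $\sigma \in \langle \tau \rangle$ yields (\ref{eq:C5orbits}).

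The routine bookkeeping is the only real content; the one point that needs care is checking that $\tau$ genuinely acts on the parameter set $\mathcal{U}$ of $\F{q_0}$-independent tuples in $V_0$, which is immediate from the fact that $\tau$ fixes $\F{q_0}$ setwise and fixes $\mathcal{B}$ pointwise. I do not anticipate any substantive obstacle, since the heavy lifting (the description of $\GL{n,q_0}$-orbits via the invariant $D_v$) was already done in Lemma \ref{lem:GLorbits}.
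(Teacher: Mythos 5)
Your proposal is correct and follows essentially the same route as the paper: first use $L = \GL{n,q_0}\circ Z_{q-1}$ together with $D_{\lambda v}=\lambda D_v$ to reduce the description of $v^L$ to Lemma \ref{lem:GLorbits}, then write $v^{G_0}$ as the union of the $L$-orbits of the $v^\sigma$ and use $D_{v^\sigma}=(D_v)^\sigma$. The points you flag for care (that $\tau$ preserves $V_0$ and the set $\mathcal{U}$, and that $\{\beta_i^\sigma\}$ is a basis of $(D_v)^\sigma$) are exactly the facts the paper relies on implicitly, so there is no gap.
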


\begin{proof}
Let $U' := \{ u \in V^\# \ | \ D_u = \lambda D_v \ \text{for some} \ \lambda \in \F{q}^\# \}$. Since $L = \GL{n,q_0} \circ Z_{q-1}$ and $D_{\lambda v} = \lambda D_v$ for any $\lambda \in \F{q}$, it follows from Lemma \ref{lem:GLorbits} that $v^L = U'$.
Thus
	\[ v^{G_0} = \bigcup_{\sigma \in \langle \tau \rangle}{\left\{ u^\sigma \ \big| \ u \in v^L \right\}} \subseteq W', \]
where $W' := \big\{ u \in V^\# \ \big| \ D_u = \lambda (D_v)^\sigma, \, \lambda \in \F{q}^\#, \, \sigma \in \langle \tau \rangle \big\}$. For any $w \in W$ with $D_w = \mu (D_v)^\rho$ for $\mu \in \F{q}^\#$ and $\rho \in \langle \tau \rangle$, we have $w \in (v^\rho)^L \subseteq v^{G_0}$. Therefore $v^{G_0} = W'$, and the rest follows from Lemma \ref{lem:GLorbits}.
\end{proof}

\begin{corollary} \label{cor:numberoforbits}
Let $v \in V^\#$, and let $\mathbb{K}$, $\eta$, $D_v$ and $c(v)$ be as defined in {\rm(\ref{eq:fieldofD})}, {\rm(\ref{eq:eta})}, {\rm(\ref{eq:D_v})} and {\rm(\ref{eq:c(v)})}, respectively.
	\begin{enumerate}
	\item For $a \in \{1, \ldots, \min{n,r}\}$, the number of orbits $v^L$ with $c(v) = a$ is $\eta(a)$.
	\item $\left|v^L\right| = \renewcommand{\arraystretch}{0.8} \left[ \begin{array}{cc} n \\ c(v) \end{array} \right]_{q_0} \cdot \big|\GL{c(v),q_0}\big| \cdot \big|\F{q}^\# : \K{c(v)}^\#\big|$
	\item $\left|v^{G_0}\right| = s\left|v^L\right|$ for some divisor $s$ of $d/n$ with $s \leq \eta(c(v))$.
	\end{enumerate}
\end{corollary}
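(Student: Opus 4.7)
The plan is to derive all three statements from Proposition \ref{prop:C5orbits}, which identifies $L$-orbits in $V^\#$ with equivalence classes $[D]$ of $\F{q_0}$-subspaces $D$ of $\F{q}$, combined with the enumeration of such classes in Lemma \ref{lem:K-eta}.

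For part (1), Proposition \ref{prop:C5orbits} gives $u \in v^L$ if and only if $D_u \in [D_v]$, so $L$-orbits with $c$-value equal to $a$ correspond bijectively to equivalence classes $[D]$ of $a$-dimensional $\F{q_0}$-subspaces $D$ of $\F{q}$; Lemma \ref{lem:K-eta}(2) counts these as $\eta(a)$.

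For part (2), I would partition $v^L$ as the disjoint union, over $D' \in [D_v]$, of the sets $\{u \in V^\# : D_u = D'\}$. By Lemma \ref{lem:GLorbits} each such set is a single $\GL{n,q_0}$-orbit, parametrised by the map $(u_1,\ldots,u_{c(v)}) \mapsto \sum_i \beta_i u_i$ from the set $\mathcal{U}$ of $\F{q_0}$-independent $c(v)$-tuples in $V_0 \cong \F{q_0}^n$, once an $\F{q_0}$-basis $\{\beta_1,\ldots,\beta_{c(v)}\}$ of $D'$ has been fixed. A short check --- expanding a putative relation $\sum_i \beta_i(u_i - u'_i) = 0$ in the $\F{q}$-basis $\mathcal{B}$ of $V$ and invoking $\F{q_0}$-independence of the $\beta_i$ together with $\F{q}$-independence of $\mathcal{B}$ --- shows this map is injective, so each piece has size $|\mathcal{U}| = \left[\begin{smallmatrix} n \\ c(v) \end{smallmatrix}\right]_{q_0} \cdot |\GL{c(v),q_0}|$. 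Multiplying by the number $|[D_v]| = |\F{q}^\# : \K{c(v)}^\#|$ of pieces (again Lemma \ref{lem:K-eta}(2)) yields the claimed formula.

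For part (3), the key observation is that $L$ is the kernel of the canonical map $G_0 \rightarrow \langle \tau \rangle$, so $L$ is normal in $G_0$ with quotient $\langle \tau \rangle$ of order $d/n$. This quotient acts on the set of $L$-orbits in $V^\#$ via $(v^L)^\sigma := (v^\sigma)^L$, and $v^{G_0}$ is exactly the union of the $\langle \tau \rangle$-orbit of $v^L$. The $L$-orbits in this union are pairwise $G_0$-conjugate, hence of common size $|v^L|$, so $|v^{G_0}| = s|v^L|$ where $s$ is the length of this $\langle \tau \rangle$-orbit; orbit-stabiliser forces $s \mid d/n$. Since $c(v^\sigma) = c(v)$ for all $\sigma$ (as remarked after equation (\ref{eq:c(v)})), the $s$ orbits correspond to $s$ distinct equivalence classes of $c(v)$-dimensional $\F{q_0}$-subspaces of $\F{q}$, so part (1) gives $s \leq \eta(c(v))$. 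The only step requiring genuine verification is the injectivity of the parametrisation in part (2); everything else is bookkeeping from the preceding lemmas.
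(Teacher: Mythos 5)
Your proposal is correct and follows essentially the same route as the paper: part (1) via the bijection $v^L \mapsto [D_v]$ from Proposition \ref{prop:C5orbits} and the count in Lemma \ref{lem:K-eta}(2), part (2) via $|v^L| = |\mathcal{U}|\cdot|[D_v]|$, and part (3) via $L \lhd G_0$ with $|G_0:L| = d/n$ together with $c(v^\sigma) = c(v)$. The only difference is that you make explicit the injectivity of the parametrisation $(u_1,\ldots,u_{c(v)}) \mapsto \sum_i \beta_i u_i$, which the paper leaves implicit; your verification of it is correct.
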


\begin{proof}
It follows from Proposition \ref{prop:C5orbits} that the map $v^L \mapsto [D_v] := \{ \lambda D_v \ | \ \lambda \in \F{q}^\# \}$ is a one-to-one correspondence between the set of $L$-orbits and the set of classes $[D]$ of $\F{q_0}$-subspaces of $\F{q}$. Therefore, by Lemma \ref{lem:K-eta} (2), there are exactly $\eta(a)$ orbits $v^L$ with $c(v) = a$, which proves part (1). Also by Proposition \ref{prop:C5orbits}, we have $|v^L| = |\mathcal{U}| \cdot |[D_v]|$, where $\mathcal{U}$ is the set of $\F{q_0}$-independent $c(v)$-tuples in $V_0$. So
	\[ |\mathcal{U}| = \renewcommand{\arraystretch}{0.8} \left[ \begin{array}{cc} n \\ c(v) \end{array} \right]_{q_0} |\GL{c(v),q_0}|, \]
and by Lemma \ref{lem:K-eta} (2), $|[D_v]| = |\F{q}^\# : \K{c(v)}^\#|$. This proves part (2). Since $L \lhd G_0$ we must have $\left|v^{G_0}\right| = s\left|v^L\right|$ for some $s$ dividing $|G_0 : L| = |\Aut{\F{q}}| = d/n$. Also $s \leq \eta(c(v))$ since $c(v^\sigma) = c(v)$, which proves part (3).
\end{proof}

\begin{lemma} \label{lem:C5diam2}
Let $\Gamma = \Cay{V,v^{G_0}}$ for some $v \in V^\#$, and let $c(v)$ be as in {\rm(\ref{eq:c(v)})}. Let $w \in V$.
	\begin{enumerate}
	\item If $w \in v^{G_0} + v^{G_0}$ then $c(w) \leq 2c(v)$.
	\item If $D_w < D_v$ then $w \in v^{G_0} + v^{G_0}$.
	\end{enumerate}
\end{lemma}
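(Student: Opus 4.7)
The plan is to prove the two parts separately. Part 1 is a direct dimension-count, while Part 2 requires an explicit construction of two orbit elements summing to $w$.

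For Part 1, Proposition \ref{prop:C5orbits} gives $D_u = \lambda (D_v)^\sigma$ for every $u \in v^{G_0}$, with $\lambda \in \F{q}^\#$ and $\sigma \in \langle \tau \rangle$, and a short check shows $c(u) = c(v)$: scaling by $\lambda$ is an $\F{q_0}$-linear bijection of $\F{q}$, while the Frobenius $\sigma$ sends $\F{q_0}$-subspaces of $\F{q}$ to $\F{q_0}$-subspaces of the same cardinality (since $\sigma(\F{q_0}) = \F{q_0}$). Writing $w = u_1 + u_2$ in the basis $\mathcal{B}$, each coefficient of $w$ is the sum of the corresponding coefficients of $u_1$ and $u_2$, so $D_w \subseteq D_{u_1} + D_{u_2}$, whence $c(w) \leq c(u_1) + c(u_2) = 2c(v)$.

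For Part 2, the strategy is to exhibit $x, y \in v^L \subseteq v^{G_0}$ with $D_x = D_y = D_v$ and $w = x+y$. By Lemma \ref{lem:GLorbits}, I would write $w = \sum_{i=1}^{c(w)} \beta_i u_i$ with $(\beta_1, \ldots, \beta_{c(w)})$ an $\F{q_0}$-basis of $D_w$ and $(u_1, \ldots, u_{c(w)})$ an $\F{q_0}$-independent tuple in $V_0$. Since $D_w < D_v$ and $c(v) \leq n$, extend these respectively to an $\F{q_0}$-basis $(\beta_1, \ldots, \beta_{c(v)})$ of $D_v$ and an $\F{q_0}$-basis $(u_1, \ldots, u_n)$ of $V_0$. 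Using the cyclic shift $t_i := u_{i+1}$ for $1 \leq i \leq c(v)-1$ and $t_{c(v)} := u_1$, set $x := \sum_{i=1}^{c(v)} \beta_i t_i$ and $y := w - x$. Since $(t_1, \ldots, t_{c(v)})$ is $\F{q_0}$-independent, Lemma \ref{lem:GLorbits} gives $x \in v^{\GL{n,q_0}} \subseteq v^L$.

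A direct computation then yields $y = \sum_{i=1}^{c(v)} \beta_i s_i$, where $s_i = u_i - u_{i+1}$ for $1 \leq i \leq c(w)$, $s_i = -u_{i+1}$ for $c(w)+1 \leq i \leq c(v)-1$, and $s_{c(v)} = -u_1$. The key step is verifying that $(s_1, \ldots, s_{c(v)})$ is $\F{q_0}$-linearly independent: setting $\sum_i c_i s_i = 0$ and reading off the coefficient of each $u_j$, the coefficients of $u_{c(v)}, u_{c(v)-1}, \ldots, u_{c(w)+2}$ force $c_{c(v)-1} = \cdots = c_{c(w)+1} = 0$; then the coefficient of $u_{c(w)+1}$ forces $c_{c(w)} = 0$; the coefficients of $u_{c(w)}, \ldots, u_2$ give inductively $c_{c(w)-1} = \cdots = c_1 = 0$; and finally the coefficient of $u_1$ forces $c_{c(v)} = 0$. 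Applying Lemma \ref{lem:GLorbits} once more gives $y \in v^L$, so $w = x + y \in v^{G_0} + v^{G_0}$. The main obstacle is the simultaneous $\F{q_0}$-independence of $(t_i)$ and $(s_i)$; the cyclic shift handles all cases $0 \leq c(w) < c(v) \leq n$ uniformly, including the degenerate case $c(w) = 0$ (where $w = 0$ and the construction reduces to $y = -x$, with $-x \in v^L$).
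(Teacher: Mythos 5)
Your proof is correct and takes essentially the same approach as the paper: part (1) is the identical coefficient-span count via Proposition \ref{prop:C5orbits}, and part (2) rests on the same ingredients (the orbit description of Lemma \ref{lem:GLorbits}, extending a basis of $D_w$ to one of $D_v$ and the independent tuple accordingly, and a cyclic shift to secure $\F{q_0}$-independence). The only difference is organisational: you arrange $w = x + y$ exactly, whereas the paper constructs $x,y \in v^{G_0}$ with $D_{x+y} = D_w$ and then carries $x+y$ to $w$ by an element of $\GL{n,q_0}$; your variant avoids that final transfer.
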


\begin{proof}
Let $\mathcal{U}$ and $\mathcal{W}$ denote the sets of $\F{q_0}$-independent $c(v)$- and $c(w)$-tuples, respectively, in $V$.

Suppose first that $w = x+y$ for some $x,y \in v^{G_0}$. Then by Proposition \ref{prop:C5orbits} we can write $x$ and $y$ as $x = \sum_{i=1}^{c(v)}{\lambda \beta_i^\rho x_i}$ and $y = \sum_{i=1}^{c(v)}{\mu \beta_i^\sigma y_i}$ for some scalars $\lambda,\mu \in \F{q}^\#$, maps $\rho,\sigma \in \Aut{\F{q}}$, and $c(v)$-tuples $\left(x_1, \ldots, x_{c(v)}\right), \left(y_1, \ldots, y_{c(v)}\right) \in \mathcal{U}$. Hence
	\[ D_w = D_{x+y} \subseteq \left\langle \lambda \beta_1^\rho, \ldots, \lambda \beta_{c(v)}^\rho, \mu \beta_1^\sigma, \ldots, \mu \beta_{c(v)}^\sigma \right\rangle_{\F{q_0}}, \]
and therefore $c(w) = c(x+y) \leq 2c(v)$. This proves part (1).

To prove part (2), observe that Lemma \ref{lem:GLorbits} implies that we can write $v$ and $w$ as $v = \sum_{i=1}^{c(v)}{\gamma_i u_i}$ and $w = \sum_{i=1}^{c(w)}{\delta_i z_i}$ for some $\left(u_1, \ldots, u_{c(v)}\right) \in \mathcal{U}$ and $\left(z_1, \ldots, z_{c(w)}\right) \in \mathcal{W}$, and for some fixed $\F{q_0}$-bases $\{\gamma_i, \ldots, \gamma_{c(v)}\}$ and $\{\delta_1, \ldots, \delta_{c(w)}\}$ of $D_v$ and $D_w$, respectively. Since $D_w < D_v$ then $c(w) < c(v)$, and we can extend $\{\delta_1, \ldots, \delta_{c(w)}\}$ to an $\F{q_0}$-basis $\{\delta_1, \ldots, \delta_{c(v)}\}$ of $D_v$, and $\left(z_1, \ldots, z_{c(w)}\right)$ to $\left(z_1, \ldots, z_{c(v)}\right) \in \mathcal{U}$. Set $x := \sum_{i=1}^{c(v)}{\delta_i z_i}$ and $y := \sum_{i=1}^{c(v)}{\delta_i y_i}$, where $y_i := z_{i+1} - z_i$ if $1 \leq i \leq c(w)-1$, $y_{c(w)} := z_1$, and $y_i := -z_i$ if $c(w)+1 \leq i \leq c(v)$. Then $\left(y_1, \ldots, y_{c(v)}\right) \in \mathcal{U}$ and $D_x = D_y = D_v$, so by Lemma \ref{lem:GLorbits} we have $x,y \in v^{\GL{n,q_0}} \subseteq v^{G_0}$. Therefore $x+y \in v^{G_0} + v^{G_0}$. Now $D_w = D_{x+y}$, so applying Lemma \ref{lem:GLorbits} again we get $w \in (x+y)^{\GL{n,q_0}} \subseteq v^{G_0} + v^{G_0}$. Thus (2) holds.
\end{proof}

\begin{proof}[Proof of Proposition \ref{prop:C5main}]
Suppose that $r-1 \leq c(v) \leq r$. Observe that $\eta(r-1) = \eta(r) = 1$, so for either value of $c(v)$ we have $v^L = \{ u \in V \ | \ c(u) = c(v) \}$, which in turn implies that $v^{G_0} = v^L$. If $c(v) = r$ then $D_v = \F{q}$, and clearly $D_w < D_v$ for any $w \in V^\# \setminus v^{G_0}$. So $w \in v^{G_0} + v^{G_0}$ by part (2) of Lemma \ref{lem:C5diam2}, and thus $V^\# \setminus v^{G_0} \subseteq v^{G_0} + v^{G_0}$. Therefore $\diam{\Gamma} = 2$. Now suppose that $c(v) = r-1$, and let $w \in V^\# \setminus v^{G_0}$. If $c(w) < r-1$ then it follows from part (1) of Corollary \ref{cor:numberoforbits} that $D_w < \lambda D_v = D_{\lambda v}$ for some $\lambda \in \F{q}^\#$. Thus $w \in (\lambda v)^{G_0} + (\lambda v)^{G_0} = v^{G_0} + v^{G_0}$ by Lemma \ref{lem:GLorbits}. If $c(w) = r$ let $x := \sum_{i=1}^{r-1}{\alpha_i v_i}$ and $y := \sum_{i=1}^{r-2}{\beta_i v_i} + \gamma v_r$, where $\{\alpha_1, \ldots, \alpha_{r-1}\}$ is an $\F{q_0}$-basis of $D_v$, $\gamma \in \F{q}^\# \setminus D_v$, and
	\[ \beta_i := \begin{cases}
			\alpha_{i+1} - \alpha_i \ &\text{if } 1 \leq i \leq r-3; \\
			\alpha_1 - \alpha_{r-2} &\text{if } i = r-2.
		      \end{cases} \]
Then $c(x) = c(y) = r-1$ and $c(x+y) = r$, so $x,y \in v^{G_0}$ and $w \in (x+y)^{G_0} \subseteq v^{G_0} + v^{G_0}$. Therefore $V^\# \setminus v^{G_0} \subseteq v^{G_0} + v^{G_0}$, and again we have $\diam{\Gamma} = 2$. This completes the proof of part (1).

If $c(v) = 1$ then we get the special case $v^L = v^{G_0} = (\F{q}V_0)^\#$.  Let $\dist{\0_V,w}{\Gamma}$ denote the distance in $\Gamma$ between the vertices $\0_V$ and $w$; we claim that $\dist{\0_V,w}{\Gamma} = c(w)$ for any $w \in V$. Let $\ell(w) := \dist{\0_V,w}{\Gamma}$. Then $w \in Y$ by Proposition \ref{prop:C5orbits}, where $Y$ is as in (\ref{eq:GLorbits-Y}), so $w$ can be written as a sum of $c(w)$ elements of $(\F{q}V_0)^\#$ and thus $\ell(w) \leq c(w)$. On the other hand $w = \sum_{i=1}^{\ell(w)}{\lambda_i u_i}$, where $\lambda_i \in \F{q}^\#$ and $u_i \in V_0^\#$ for all $i$. Writing each $u_i$ as $u_i = \sum_{j=1}^n{\mu_{i,j}w_j}$ where $\mu_{i,j} \in \F{q_0}$ for all $i,j$, we get $w = \sum_{j=1}^n{\lambda'_j w_j}$ where $\lambda'_j = \sum_{j=1}^{\ell(w)}{\lambda_i\mu_{i,j}}$ for each $j$. Hence $D_w \leq \langle \lambda_1, \ldots, \lambda_{\ell(w)} \rangle_{\F{q_0}}$, so that $c(w) \leq \ell(w)$. Therefore $\ell(w) = c(w)$, as claimed. It follows immediately that $\diam{\Gamma} = \min{n,r}$, and that $\diam{\Gamma} = 2$ if and only if $n = 2$ or $r = 2$. This proves (2).

Suppose that $\diam{\Gamma} = 2$. Then $c(w) \leq 2c(v)$ for any $w \in V^\#$ by part (1) of Lemma \ref{lem:C5diam2}, and in particular $2c(v) \geq \min{n,r}$ since there clearly exists $u \in V^\#$ with $c(u) = \min{n,r}$. Hence $c(v) \leq \frac{1}{2}\min{n,r}$ implies that $\diam{\Gamma} > 2$, and part (3) holds.

Finally, let $a := c(v)$, $S := v^{G_0}$, and $\eta(a)$ as in (\ref{eq:eta}). By Corollary \ref{cor:numberoforbits} we have
	\[ |S| \leq \renewcommand{\arraystretch}{0.8} \left[ \begin{array}{cc} n \\ a \end{array} \right]_{q_0} |\GL{a,q_0}|\left|\F{q}^\# : \F{q_0}^\#\right|s, \]
where $s$ is the largest divisor of $d/n$ with $s \leq \eta(a)$. Hence
	\[ |S|^2 + 1 < q_0^{2an} \left|\F{q}^\# : \F{q_0}^\#\right|^2 s^2. \]
Observe that $s < q_0^{st}$ for all $s \geq 1$, where $t = \frac{9}{17}$ if $q_0 = 2$, and $t = \frac{1}{2}$ if $q_0 \geq 3$. Also, for $q_0 \geq 3$, we have $q_0 - 1 > q_0^{5/8}$, so that $\left|\F{q}^\# : \F{q_0}^\#\right| < q_0^{r-5/8}$. With these bounds we obtain
	\[ |S|^2 + 1 < q_0^{2(a n + r) + k_1(q_0)}, \]
where $k_1(q_0)$ is as defined in (\ref{eq:k1(q_0)}). It is easy to verify that if $a < (r(n-2) - k_1(q_0))/(2n)$ then $2(a n + r) + k_1(q_0) < rn$, so $|S|^2 + 1 < |V|$, and thus $\diam{\Gamma} > 2$ by Lemma \ref{lem:HAgraphs}. This proves part (4).
\end{proof}

\begin{remark}
Some small cases covered by Proposition \ref{prop:C5main} are summarised in Table \ref{table:C5smallcases}. The cases left unresolved by Proposition \ref{prop:C5main} are the following:
	\begin{enumerate}
	\item $5 \leq r \leq n$, $r/2 \leq c(v) \leq r-2$;
	\item $2 = n \leq r-2$, $c(v) = 2$;
	\item $3 \leq n < r$, $\max{ n/2, (r(n-2) - k_1(q_0))/(2n) } \leq c(v) \leq r-2$.
	\end{enumerate}

Let $a := c(v) < r$, $S = v^{G_0}$, and $s$ as in Proposition \ref{prop:C5main} (4). Then $s \geq 1$, $\left|\F{q}^\# : \F{q_0}^\#\right| > q_0^{r-2}$ and
	\[ \renewcommand{\arraystretch}{0.8} \left[ \begin{array}{cc} n \\ a \end{array} \right]_{q_0} |\GL{a,q_0}| > q_0^{2a(n-1)}, \]
so
	\begin{align*}
	|G_0|^2 + 1 &\geq \renewcommand{\arraystretch}{0.8} \left( \left[ \begin{array}{cc} n \\ a \end{array} \right]_{q_0} |\GL{a,q_0}| \left|\F{q}^\# : \F{q_0}^\#\right| s \right)^2 + 1 \\
	&> q_0^{2a(n-1) + 2(r-2)}.
	\end{align*}
It is easy to show that if condition (1) or (2) holds then $2(a(n-1) + r - 2) > rn$, and thus $|G_0|^2 + 1 > |V|$. This, unfortunately, does not lead to any conclusion about $\diam{\Gamma}$.

	\begin{table}
	\begin{center}
	\renewcommand{\arraystretch}{1.25}
	\small
	\begin{tabular}{cccl}
	\hline $r$ & $n$ & $c(v)$ & Conclusion about $\Gamma = \Cay{V,v^{G_0}}$ \\
	\hline\hline
	2 & $\geq 2$ & 1 & $\diam{\Gamma} = 2$ by Proposition \ref{prop:C5main} (2) \\
	  &  & 2 & $\diam{\Gamma} = 2$ by Proposition \ref{prop:C5main} (1) \\
	\hline
	3 & 2 & 1 & $\diam{\Gamma} = 2$ by Proposition \ref{prop:C5main} (2) \\
	  &  & 2 & $\diam{\Gamma} = 2$ by Proposition \ref{prop:C5main} (1) \\
	\hline
	3 & $\geq 3$ & 1 & $\diam{\Gamma} = 3$ by Proposition \ref{prop:C5main} (2) \\
	  &  & 2 & $\diam{\Gamma} = 2$ by Proposition \ref{prop:C5main} (1) \\
	  &  & 3 & $\diam{\Gamma} = 2$ by Proposition \ref{prop:C5main} (2) \\
	\hline
	5 & 2 & 1 & $\diam{\Gamma} = 2$ by Proposition \ref{prop:C5main} (2) \\
	\hline
	5 & 3 & 1 & $\diam{\Gamma} = 3$ by Proposition \ref{prop:C5main} (2) \\
	\hline
	5 & 4 & 1 & $\diam{\Gamma} = 4$ by Proposition \ref{prop:C5main} (2) \\
	  &  & 4 & $\diam{\Gamma} = 2$ by Proposition \ref{prop:C5main} (1) \\
	\hline
	5 & $\geq 5$ & 1 & $\diam{\Gamma} = 5$ by Proposition \ref{prop:C5main} (2) \\
	  &  & 2 & $\diam{\Gamma} > 2$ by Proposition \ref{prop:C5main} (3) \\
	  &  & 4 & $\diam{\Gamma} = 2$ by Proposition \ref{prop:C5main} (1) \\
	  &  & 5 & $\diam{\Gamma} = 2$ by Proposition \ref{prop:C5main} (1) \\
	\hline
	\end{tabular}
	\bigskip
	\caption{$\Gamma$ as in Proposition \ref{prop:C5main} for small values of $r$ and $n$} \label{table:C5smallcases}
	\end{center}
	\end{table}
\end{remark}

\subsubsection{Case $H = \GamSp{n,q}$}

By Theorem \ref{thm:Asch-Sp},
	\[ G_0 = (\GSp{n,q_0} \circ Z_{q-1}) \rtimes \langle \tau \rangle \]
and $L = \GSp{n,q_0} \circ Z_{q-1}$. The main result in this section is parallel to part (4) of Proposition \ref{prop:C5main}.

\begin{proposition} \label{prop:C5bounds-Sp}
Let $\Gamma$ be a graph and $G \leq \Aut{\Gamma}$ such that $G$ satisfies Hypothesis {\rm\ref{hyp:HA}} with $H = \GamSp{n,q}$ and $i=5$. Then $\Gamma$ is connected and $G$-symmetric if and only if $\Gamma \cong \Cay{V,v^{G_0}}$ for some $v \in V^\#$. Moreover, if $s := |\tau| = |G_0 : L|$ and $c(v)$ is as defined in {\rm(\ref{eq:c(v)})}, and if
	\[ t := \begin{cases}
	             9/17 \ &\text{if } q_0 = 2, \\
		     1/2 &\text{if } q_0 > 2
	             \end{cases} \]
then the following hold:
	\begin{enumerate}
	\item If $c(v) < \frac{1}{2}\min{n,r}$ then $\diam{\Gamma} > 2$.
	\item If $3 \leq n \leq r$, $c(v) \geq n/2$ and $r > (n^2 + n + 2st)/(n-2)$, then $\diam{\Gamma} > 2$.
	\end{enumerate}
\end{proposition}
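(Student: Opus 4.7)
The plan is to follow the template of Proposition \ref{prop:C5main}, exploiting the containment of $\C_5$-groups induced by $\GSp{n,q_0} \leq \GL{n,q_0}$. First I would establish the Cayley-graph characterisation via Lemma \ref{lem:HAgraphs}: any $G$-symmetric graph is $\Cay{V,S}$ for a $G_0$-orbit $S$ with $-S = S$ and $\langle S \rangle = V$. Here $-I \in \Sp{n,q_0} \leq L$, so every orbit $v^{G_0}$ is automatically closed under negation; and since $G_0$ acts irreducibly on $V$, the $G_0$-invariant additive subgroup $\langle v^{G_0}\rangle$ must equal $V$. Hence $\Gamma$ is connected and $G$-symmetric if and only if $\Gamma \cong \Cay{V,v^{G_0}}$ for some $v \in V^\#$.

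For part (1), the key observation is that Lemma \ref{lem:C5diam2}(1) transfers verbatim: the symplectic orbit $v^{G_0}$ is contained in the corresponding $\C_5$-orbit of $\GamL{n,q}$, and the proof of that lemma only uses the semilinear description of orbit elements provided by Proposition \ref{prop:C5orbits}. Consequently $c(w) \leq 2c(v)$ for every $w \in v^{G_0} + v^{G_0}$. Choosing a vector $w \in V^\#$ whose coordinates $\F{q_0}$-span a subspace of $\F{q}$ of dimension $\min{n,r}$ — so that $c(w) = \min{n,r}$ — and using that $c$ is constant on $G_0$-orbits, a diameter-two assumption forces $c(v) \geq \min{n,r}/2$, contradicting the hypothesis of (1).

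For part (2), I would apply the crude size estimate $|V| \leq |v^{G_0}|^2 + 1 \leq |G_0|^2 + 1$ supplied by Lemma \ref{lem:HAgraphs}. The central product decomposition $L = \GSp{n,q_0} \circ Z_{q-1}$ together with the standard formula $|\Sp{n,q_0}| = q_0^{n^2/4}\prod_{i=1}^{n/2}(q_0^{2i}-1) < q_0^{n(n+1)/2}$ gives $|L| = (q-1)|\Sp{n,q_0}| < q_0^{r + n(n+1)/2}$, and hence $|G_0|^2 < s^2 q_0^{2r + n(n+1)}$. Inserting the elementary inequality $s < q_0^{st}$ (with $t$ as in the statement), a direct comparison with $|V| = q_0^{rn}$ shows that the hypothesis $r > (n^2 + n + 2st)/(n-2)$ yields $|G_0|^2 + 1 < |V|$, ruling out $\diam{\Gamma} \leq 2$.

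The main obstacle is the bookkeeping in part (2): obtaining a sufficiently tight order estimate for $\GSp{n,q_0}$, and verifying $s < q_0^{st}$ uniformly in the two cases $q_0 = 2$ and $q_0 \geq 3$, so that the clean arithmetic condition $r > (n^2+n+2st)/(n-2)$ emerges. The conceptual content of the remaining steps — the Cayley-graph characterisation, symmetry of $v^{G_0}$ under negation, and transfer of the $c$-value growth bound — passes essentially for free from the $\GamL{n,q}$ treatment via the group inclusion $\GSp{n,q_0} \leq \GL{n,q_0}$.
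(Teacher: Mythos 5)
Your proposal is correct and follows essentially the same route as the paper: part (2) is the identical order-counting argument ($|S| \le |G_0| \le s(q_0^r-1)|\Sp{n,q_0}| < q_0^{st + r + (n^2+n)/2}$ compared against $|V| = q_0^{rn}$ via Lemma \ref{lem:HAgraphs}), and the Cayley-graph characterisation is handled the same way. For part (1) the paper instead notes that $\Cay{V,v^{G_0}}$ is a spanning subgraph of the corresponding $\GamL{n,q}$-graph and quotes Proposition \ref{prop:C5main}(2),(3), but since those results rest on the same $c(w) \le 2c(v)$ bound from Lemma \ref{lem:C5diam2}(1) that you invoke directly, the substance is unchanged.
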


\begin{proof}
Assume that $c(v) < \frac{1}{2}\min{n,r}$. Let $S = v^{G_0}$, and let $\Gamma' = \Cay{V,v^{G'_0}}$, such that $G'$ satisfies Hypothesis {\rm\ref{hyp:HA}} with $H = \GamL{n,q}$ and $i = 5$. Then $\Gamma$ is a subgraph of $\Gamma'$, and hence $\diam{\Gamma} \geq \diam{\Gamma'}$. If $c(v) = 1$ then $\diam{\Gamma'} \geq \min{n,r} > 2$ by part (2) of Proposition \ref{prop:C5main}, and if $c(v) \geq 2$ then $\diam{\Gamma'} > 2$ by part (3) of Proposition \ref{prop:C5main}. In both cases $\diam{\Gamma} > 2$. This proves statement (1).

We now prove statement (2). Observe that for any $\lambda \in \F{q}^\#$ and $g \in \GSp{n,q_0}$, we have $\lambda v^g = v^{\lambda g} \in v^{\GSp{n,q_0}}$ if and only if $\lambda I_n \in Z_{q_0-1}$, the subgroup of scalar matrices in $\GL{n,q_0}$. Hence $v^L = \bigcup_{\lambda \in \F{q}^\#}{\lambda v^{\GSp{n,q_0}}}$ can be written as a disjoint union $v^L = \bigcup_{\lambda \in T}{\lambda v^{\GSp{n,q_0}}}$, where $T$ is a transversal of $\F{q_0}^\#$ in $\F{q}^\#$. Thus
	\[ |v^L| \leq |T||\GSp{n,q_0}| = (q_0^r - 1)|\Sp{n,q_0}| \]
and $|S| \leq s |v^L|$, where $s = |G_0 : L|$. We have
	\[ |\Sp{n,q_0}| = q_0^{n^2/4} \prod_{i=1}^{n/2} \left( q^{2i} - 1 \right) < q_0^{(n^2 + n)/2}. \]
Also, as in the proof of Proposition \ref{prop:C5main} (4), we have $s < q_0^{st}$ for any $s$, where $t = \frac{9}{17}$ if $q_0 = 2$, and $t = \frac{1}{2}$ if $q_0 \geq 3$. Hence
	\[ |S|^2 + 1 < s^2 (q_0^r - 1)^2 q_0^{n^2 + n} < q_0^{n^2 + n + 2r + 2st}. \]
If $r > (n^2 + n + 2st)/(n-2)$ then $rn > n^2 + n + 2r + 2st$, so $|V| > |S|^2 + 1$ and $\diam{\Gamma} > 2$ by Lemma \ref{lem:HAgraphs}. Therefore part (2) holds.
\end{proof}
\subsection{Class $\C_6$}

In this case $\dim{V} = r^t$ where $r$ is a prime different from $p$, $q$ is the smallest power of $p$ such that $q \equiv 1 \pmod{|Z(R)|}$ for some $R$ in Table \ref{table:C6groups}, and
	\[ G_0 = (Z_{q-1} \circ R).T \rtimes \langle \tau \rangle, \]
with $T$ as in Table \ref{table:C6groups}. By Theorems \ref{thm:Asch-GL} and \ref{thm:Asch-Sp}, if $H = \GamL{n,q}$ then $R$ is of type 1 or 2, and if $H = \GamSp{n,q}$ with $q$ odd then $R$ is of type 4.

Proposition \ref{prop:C6bounds} is an extension of \cite[Proposition 3.6]{HAcase}, and is proved somewhat similarly.

\begin{proposition} \label{prop:C6bounds}
Let $V$ and $G_0$ be as above, and let $\Gamma := \Cay{V,S}$ for some $G_0$-orbit $S \subseteq V^\#$.
	\begin{enumerate}
	\item Suppose that $r$ is odd, $q \equiv 1\pmod{r}$, and $R$ is Type 1. If $\diam{\Gamma} = 2$ then $1 \leq t \leq 3$, $r \leq r_0(t)$, and $q \leq q_0(r,t)$, where $r_0(t)$ and $q_0(r,t)$ are given in Table {\rm\ref{table:C6bounds-type1}}.
	\item Suppose that $r = 2$, $t \geq 2$, $q \equiv 1 \pmod{4}$, and $R$ is Type 2. If $\diam{\Gamma} = 2$ then $2 \leq t \leq 6$ and $q \leq q_0(t)$, where $q_0(t)$ is given in Table {\rm\ref{table:C6bounds-type2}}.
	\item Suppose that $r = 2$, $t \geq 2$, $q$ is odd, and $R$ is Type 4. If $\diam{\Gamma} = 2$ then $2 \leq t \leq 7$ and $q \leq q_0(t)$, where $q_0(t)$ is given in Table {\rm\ref{table:C6bounds-type4}}.
	\item Suppose that $r = 2$, $t = 1$, $q$ is odd, and $R$ is Type 2 or 4. Then $\diam{\Gamma} = 2$ for any $S$.
	\end{enumerate}

	\begin{table}[ht]
	\begin{center}
	\renewcommand{\arraystretch}{1.25}
	\begin{tabular}{crrr}
	\hline
	$t$ & $1$ & $2$ & $3$ \\
	\hline\hline
	$r_0(t)$ & $11$ & $3$ & $3$ \\
	\hline
	$q_0(3,t)$ & $186619$ & $73$ & $11$ \\
	$q_0(5,t)$ & $521$ & - & - \\
	$q_0(7,t)$ & $71$ & - & - \\
	$q_0(11,t)$ & $23$ & - & - \\
	\hline
	\end{tabular}
	\bigskip \caption{Bounds for $r$ and $q$ when $R$ is Type 1} \label{table:C6bounds-type1}
	\end{center}
	\end{table}
	
	\begin{table}[ht]
	\begin{center}
	\renewcommand{\arraystretch}{1.25}
	\begin{tabular}{crrrrrr}
	\hline
	$t$ & $2$ & $3$ & $4$ & $5$ & $6$ \\
	\hline\hline
	$q_0(t)$ & $23029$ & $569$ & $73$ & $17$ & $5$ \\
	\hline
	\end{tabular}
	\bigskip \caption{Bounds for $q$ when $R$ is Type 2 and $t \geq 2$} \label{table:C6bounds-type2}
	\end{center}
	\end{table}

	\begin{table}[ht]
	\begin{center}
	\renewcommand{\arraystretch}{1.25}
	\begin{tabular}{crrrrrr}
	\hline
	$t$ & $2$ & $3$ & $4$ & $5$ & $6$ & $7$ \\
	\hline\hline
	$q_0(t)$ & $1913$ & $149$ & $37$ & $11$ & $5$ & $3$ \\
	\hline
	\end{tabular}
	\bigskip \caption{Bounds for $q$ when $R$ is Type 4 and $t \geq 2$} \label{table:C6bounds-type4}
	\end{center}
	\end{table}
\end{proposition}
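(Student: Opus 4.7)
The plan is to prove parts (1)--(3) by a counting argument based on the diameter-two criterion $|V| \leq |S|^2 + 1 \leq |G_0|^2 + 1$ from Lemma \ref{lem:HAgraphs}. Here $|V| = q^{r^t}$, while $|G_0|$ can be read off from Table \ref{table:C6groups}: in each case
\[
|G_0| = |Z_{q-1} \circ R|\cdot|T|\cdot|G_0:L|,
\]
where in the central product $Z(R)$ is identified with the appropriate (cyclic of order $r$ or $2$, respectively $4$) subgroup of $Z_{q-1}$, and $|G_0:L|$ divides $|\Aut{\F{q}}| = \log_p q$. For Type 1 this gives, up to the $\Aut{\F{q}}$-factor,
\[
|G_0| \leq (q-1)\, r^{2t}\,|\Sp{2t,r}|\,\log_p q = (q-1)\, r^{2t+t^2}\prod_{i=1}^t(r^{2i}-1)\,\log_p q,
\]
and entirely analogous (slightly smaller) expressions arise for Types 2 and 4 using $|\Sp{2t,2}|$ and $|\Ominus{2t,2}|$, respectively.

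I would then substitute these expressions into $q^{r^t} \leq |G_0|^2 + 1$ and, taking base-$q$ logarithms, observe that the left-hand side grows like $r^t$ while the right-hand side grows only quadratically in $t$ (and at most linearly in $\log q$). This forces $t$ to lie in a bounded range: for Type 1 one obtains $t \leq 3$, for Type 2 one obtains $t \leq 6$, and for Type 4 one obtains $t \leq 7$. For each admissible $t$ the inequality reduces to a polynomial inequality in $r$ (bounding $r_0(t)$), and then, for each feasible pair $(r,t)$, to a single-variable inequality in $q$ whose solution yields the explicit entries $q_0(r,t)$ of Tables \ref{table:C6bounds-type1}--\ref{table:C6bounds-type4}. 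The approach mirrors \cite[Proposition 3.6]{HAcase}; the only new ingredient is the factor $\log_p q$ introduced by the semilinear part $\langle\tau\rangle$, which affects only the constants in the asymptotic estimate.

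For part (4) we have $n = r^t = 2$, so $V = \F{q}^2$ with $q$ odd, and $L \cap \GL{2,q}$ contains the scalar subgroup $Z_{q-1}$ of $\GL{2,q}$ as a central subgroup. Consequently any $G_0$-orbit $S$ in $V^\#$ is a union of full punctured $\F{q}$-lines through $\0$, and since there are only three (for Type 2) or $q+1$ (bounded) $L$-orbit types on such lines, one can verify directly that $V^\# \subseteq S \cup (S+S)$ for every orbit $S$: given any line $\ell$ not contained in $S$, pick $s_1 \in S$ on a line meeting $\ell$ nontrivially and use that $\F{q}^\# s_1 \subseteq S$ to produce, by varying the scalar multiplier, the required second summand in $S$.

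The main obstacle is the careful bookkeeping for parts (1)--(3): keeping track of the central identifications in the central products, of the exact contribution of $|G_0:L|$, and of the sharp size of $|T|$. The asymptotic argument quickly yields \emph{some} bound on $(t,r,q)$, but producing the tight numerical entries in Tables \ref{table:C6bounds-type1}--\ref{table:C6bounds-type4} requires solving the resulting inequalities without throwing away constants, particularly for small $r$ and $t$ where the $r$-group contribution to $|G_0|$ is comparable to the $q$-factor.
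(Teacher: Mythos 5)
Your treatment of parts (1)--(3) is essentially the paper's own argument: the same inequality $|V|\leq|S|^2+1\leq|G_0|^2+1$, the same computation of $|G_0|$ from the central product structure (including the extra factor $\leq\log_p q$ from $\langle\tau\rangle$, which the paper also carries and further bounds by $\ell\leq r-1$ for Type 1 and $\ell\leq 2$ for Type 2), followed by the observation that $r^t$ outgrows the quadratic-in-$t$ exponent on the right, and then case-by-case numerical resolution for the surviving pairs $(r,t)$. Nothing to add there beyond the bookkeeping you already acknowledge.

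Part (4) has a small but genuine gap as written. In $V=\F{q}^2$ every $G_0$-orbit is indeed a union of punctured lines through $\0$ (because $Z_{q-1}\leq G_0$), but your covering argument --- fix $s_1\in S$ and vary the scalar $\lambda$ so that $w-\lambda s_1$ lands in $S$ --- only succeeds if $S$ contains at least \emph{two} distinct lines: the affine line $\{w-\lambda s_1:\lambda\in\F{q}^\#\}$ meets every line through the origin except $\langle s_1\rangle$ and $\langle w\rangle$, so you need a third line of $S$ to hit. If some orbit were a single punctured line, the claim (and indeed connectedness of $\Gamma$) would fail. The missing ingredient is the irreducibility of $R$ on $V$: since $R$ normalises no line, every orbit $v^{G_0}$ contains a basis $\{v_1,v_2\}$ and hence the two punctured lines $\langle v_1\rangle^\#\cup\langle v_2\rangle^\#$, after which $V\subseteq S+S$ is immediate (this is exactly how the paper argues, exhibiting an explicit irreducible representation of $R$ of Type 4 in $\GL{2,q}$ and citing the earlier work for Type 2). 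Also note that in dimension $2$ the phrase ``a line meeting $\ell$ nontrivially'' is vacuous --- distinct lines through the origin meet only in $\0$ --- and the count of ``three $L$-orbit types on lines'' is not what the argument needs; what matters is only that each orbit covers at least two lines.
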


\begin{proof}
If $q = p^\ell$ and $R$ is Type 1 or 2, then
	\[ |G_0| = \ell (q-1) r^{2t} |\Sp{2t,r}| < \ell (q-1) r^{2t^2 + 3t}. \]
Suppose first that $R$ is Type 1. In this case $r$ is odd and $q = p^\ell \equiv 1 \pmod{r}$, so $\ell \leq r-1$, $q > r$, and
	\[ |G_0|^2 + 1 < \left( (q-1) r^{2t^2 + 3t + 1} \right)^2 + 1 < q^{4t^2 + 6t + 4}. \]
It can be shown that $4t^2 + 6t + 4 < r^t$ for the following cases: $t \geq 5$ and $r \geq 3$, $t = 1$ and $r \geq 17$, $t = 2$ and $r \geq 7$, and $t \in \{3,4\}$ and $r \geq 5$. Thus for all these cases $|G_0|^2 + 1 < |V|$. For all remaining pairs $(r,t)$ define
	\[ \pi(q,r,t) := \left( (r-1)(q-1) r^{2t} |\Sp{2t,r}| \right)^2 + 1 - q^{r^t}. \]
Then $|G_0|^2 + 1 - |V| < \pi(q,r,t)$ and $\pi(q,r,t) < 0$ for $q > \left( (r-1) r^{2t} |\Sp{2t,r}| \right)^{2/\left(r^t - 2\right)}$. Getting the largest prime power $q = p^\ell \equiv 1 \pmod{r}$ less than or equal to this bound, with $\ell \leq r-1$ and $\pi(q,r,t) > 0$, gives the values $q_0(r,t)$ in Table \ref{table:C6bounds-type1}, and for each $t$ we take $r_0(t)$ to be the largest value of $r$ for which there exist such $q$. In particular, $\pi(q,r,t) < 0$ for the following cases: $(r,t) = (13,1)$ and $q > 13$, $(r,t) = (5,2)$ and $q > 7$, $(r,t) = (3,4)$ and $q > 3$; for these cases there is no value of $q$ less than or equal to the given bound that satisfies all the required conditions. This proves part(1).

Now suppose that $R$ is Type 2 with $t \geq 2$. Then $r = 2$ and $q = p^\ell \equiv 1 \pmod{4}$, so $\ell \leq 2$, $q > 4$, and
	\[ |G_0|^2 + 1 < \left( (q-1) 2^{2t^2 + 3t + 1} \right)^2 + 1 < q^{2t^2 + 3t + 3}. \]
We have $2t^2 + 3t + 3 < 2^t$ whenever $t \geq 7$, hence $|G_0|^2 + 1 \leq |V|$ for all such $t$. For $t \in \{1, \ldots, 6\}$ define
	\[ \pi(q,t) := \left( 2 (q-1) 2^{2t} |\Sp{2t,2}| \right)^2 + 1 - q^{2^t}, \]
and observe that $|G_0|^2 + 1 - |V| < \pi(q,t) < 0$ for all $q > \left( 2^{2t+1} |\Sp{2t,2}| \right)^{1/\left(2^{t-1} - 1\right)}$. The values of $q_0(t)$ in Table \ref{table:C6bounds-type2} are the largest prime powers $q = p^\ell \equiv 1 \pmod{4}$ less than or equal to these bounds, with $\ell \leq 2$ and satisfying $\pi(q,t) > 0$. This proves (2).

For (3), suppose that $R$ is Type 4 with $t \geq 2$. Then $r = 2$ and $|Z(R)| = 2$, so $\ell = 1$ and $q = p$. Also $q \geq 3$, so $q^{3/2} > 4$. We have
	\[ |G_0| = (q-1) 2^{2t} \left| \Ominus{2t,2} \right| < (q-1) 2^{2t^2 + t + 2} \]
so
	\[ |G_0|^2 + 1 < \left( (q-1) 2^{2t^2 + t + 2} \right)^2 + 1 < q^2 4^{2t^2 + t + 2} < q^{3t^2 + \frac{3}{2}t + 5}. \]
We have $3t^2 + \frac{3}{2}t + 5 < 2^t$ (and hence $|G_0|^2 + 1 < |V|$) for all $t \geq 8$. For $t \in \{2, \ldots, 7\}$ define
	\[ \pi(q,t) := \left( (q-1) 2^{2t} \left|\Ominus{2t,2}\right| \right)^2 + 1 - q^{2^t}. \]
Then $|G_0|^2 + 1 - |V| < \pi(q,t) < 0$ for all $q > \left( 2^{2t} \left|\Ominus{2t,2}\right| \right)^{1/\left( 2^{t-1} - 1 \right)}$. As in the previous cases we take $q_0(t)$, $2 \leq t \leq 7$, to be the largest prime $q$ less than or equal to these bounds such that $\pi(q,t) > 0$. This yields Table \ref{table:C6bounds-type4} and proves (3).

Statement 4 for the case where $R$ is type 2 is precisely \cite[Proposition 3.6 (2)]{HAcase}. For the case where $R$ is type 4 define the matrices $a, c \in \GL{V}$ by
	\[ a := \left(\begin{matrix} 0 & 1 \\ -1 & 0 \end{matrix}\right) \quad \text{and} \quad
		 c := \left(\begin{matrix} \beta & \gamma \\ \gamma & -\beta \end{matrix}\right), \]
where $\beta,\gamma \in \F{q}$ such that $\beta^2 + \gamma^2 = -1$. Then $\langle a,c \rangle$ is a representation of $R$ in $\GL{2,q}$ (see \cite[pp. 153-154]{KleidLieb}). Since $R$ is irreducible on $V$, any $R$-orbit $v^R$ in $V^\#$ contains a basis $\{v_1,v_2\}$ of $V$, and $v^{G_0}$ contains $\langle v_1 \rangle^\# \cup \langle v_2 \rangle^\#$. Clearly $V^\# \subseteq \langle v_1 \rangle^\# + \langle v_2 \rangle^\#$. Therefore $V \subseteq v^{G_0} + v^{G_0}$, and thus $\diam{\Gamma} = 2$. This proves (4), and completes the proof of the proposition.
\end{proof}

\subsection{Class $\C_7$}

In this case $V = \otimes_{i=1}^t{U_i}$ with $U_i = \F{q}^m$ for all $i$, $m \geq 2$, $t \geq 2$, and $d = m^t$. Assume that $\mathcal{B}$ is a tensor product basis of $V$, with
	\[ \mathcal{B} := \left\{ \otimes_{i=1}^t{u_{i,j}} \ \vline \ 1 \leq j \leq m \right\}. \]
As in the $\C_4$ case, it is not difficult to show that for any $v = \sum_{i=1}^r{\left(\otimes_{j=1}^t{v_{i,j}}\right)} \in V^\#$ we have
	\[ v^\tau = \sum_{i=1}^r{\left(\otimes_{j=1}^t{v_{i,j}^\tau}\right)}, \]
where $\tau$ acts on each $U_i$ with respect to the basis $\{ u_{i,j} \ | \ 1 \leq j \leq m \}$.

\subsubsection{Case $H = \GamL{n,q}$}

By Theorem \ref{thm:Asch-GL}
	\begin{equation} \label{eq:G0-C7}
	G_0 = (\GL{m,q} \wr_\otimes \Sym{t}) \rtimes \langle \tau \rangle.
	\end{equation}

If $t = 2$ then we obtain the examples in Proposition \ref{prop:C4-diam2} with $k = m$. We state this in the next corollary, which is analogous to \cite[Corollary 3.7]{HAcase}.

\begin{corollary} \label{cor:C7-twofactors}
Let $V = \otimes_{i=1}^t{\F{q}^m}$ and let $G_0$ be as in {\rm(\ref{eq:G0-C7})} with $m \geq 2$ and $t = 2$. Then the $G_0$-orbits in $V^\#$ are the sets $Y_s$ for each $s \in \{1, \ldots, m\}$, where $Y_s$ is as defined in {\rm(\ref{eq:Y_s})}. Moreover, for any $G_0$-orbit $S \subseteq V^\#$, the graph $\Cay{V,S}$ has diameter $2$ if and only if $S = Y_s$ for some $s \geq m/2$.
\end{corollary}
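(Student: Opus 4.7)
The plan is to deduce the corollary directly from Proposition~\ref{prop:C4-diam2} specialised to $k = m$, exploiting the remark at the start of Section~\ref{sec:C4} that the results there hold without the restriction $k \neq m$. Under Hypothesis~\ref{hyp:HA} with $i = 7$ and $t = 2$, the group $G_0$ contains the $\C_4$-type subgroup
\[
K := (\GL{m,q} \otimes \GL{m,q}) \rtimes \langle \tau \rangle
\]
as an index-$2$ subgroup, the quotient $\Sym{2}$ acting via the involution $\sigma : u \otimes w \mapsto w \otimes u$ extended linearly to $V$.

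The first step is to identify the $G_0$-orbits. By Lemma~\ref{lem:C4orbits}, the $K$-orbits in $V^\#$ are precisely the tensor-weight classes $Y_1, \ldots, Y_m$, so each $Y_s$ is a union of $G_0$-orbits. To upgrade this, I only need to verify that $\sigma$ preserves tensor weight: given any minimal expression $v = \sum_{i=1}^s a_i \otimes b_i$ with $s = \wt{v}$, we have $v^\sigma = \sum_{i=1}^s b_i \otimes a_i$, hence $\wt{v^\sigma} \leq \wt{v}$, and applying $\sigma$ once more (since $\sigma^2 = 1$) gives the reverse inequality. Thus each $Y_s$ is $\sigma$-invariant, so it is a single $G_0$-orbit.

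For the diameter characterisation, I observe that a Cayley graph $\Cay{V,S}$ depends only on the connection set $S$, not on the group acting; consequently $\Cay{V,Y_s}$ is the same graph whether viewed through the $\C_4$ or $\C_7$ lens. Proposition~\ref{prop:C4-diam2}, applied with $k = m$, asserts that $\Cay{V,Y_s}$ has diameter $2$ if and only if $s \geq \tfrac{1}{2}\min\{m,m\} = m/2$. Combined with Lemma~\ref{lem:HAgraphs} (whose hypotheses $S = -S$ and $\langle S\rangle = V$ are inherited verbatim from the $\C_4$ setting since $S = Y_s$ is the same set), this gives both directions of the claimed equivalence.

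The only substantive point in the argument is the stability of tensor weight under the factor swap $\sigma$, which is essentially the symmetry of tensor rank in its two arguments; consequently there is no real obstacle, and the corollary is a near-immediate consequence of the results already established for class $\C_4$.
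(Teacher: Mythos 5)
Your proof is correct and follows essentially the same route as the paper, whose own proof simply cites Lemma \ref{lem:C4orbits} and Proposition \ref{prop:C4-diam2}; your explicit check that the factor-swapping involution preserves tensor weight supplies the one detail the paper leaves implicit. (One sentence is stated backwards: since $K \leq G_0$, each $G_0$-orbit is a union of the $K$-orbits $Y_s$, not the reverse --- but the argument you actually use, namely that $Y_s$ is a single $K$-orbit contained in one $G_0$-orbit and is $\sigma$-invariant, hence equal to that orbit, is the right one.)
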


\begin{proof}
This follows immediately from Lemma \ref{lem:C4orbits} and Proposition \ref{prop:C4-diam2}.
\end{proof}

Using Lemma \ref{lem:HAgraphs}, we get the following bounds which significantly reduce the cases that remain to be considered. It turns out that these are exactly the same as those in \cite[Proposition 3.8]{HAcase}; we prove them here for subgroups of $\GamL{n,q}$.

\begin{proposition} \label{prop:C7bounds}
Let $\Gamma$ be a graph and let $G \leq \Aut{\Gamma}$, such that $G$ satisfies Hypothesis {\rm\ref{hyp:HA}} with $G_0$ as in {\rm(\ref{eq:G0-C7})}, $m \geq 2$ and $t \geq 3$. Then $\Gamma$ is connected and $G$-symmetric if and only if $\Gamma \cong \Cay{V,v^{G_0}}$ for some $v \in V^\#$. Moreover, if $\diam{\Gamma} = 2$ then either:
	\begin{enumerate}
	\item $m=2$ and $t \in \{3,4,5\}$; or
	\item $t=3$ and $m \in \{3,4,5\}$.
	\end{enumerate}
\end{proposition}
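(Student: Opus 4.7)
The plan has two parts. For the \emph{if-and-only-if} statement, I would apply Lemma \ref{lem:HAgraphs} directly: a $G$-symmetric graph on $V$ is a Cayley graph $\Cay{V,S}$ with $S$ a $G_0$-orbit satisfying $-S = S$ and $\langle S \rangle = V$. Here both extra conditions are automatic, because $-I_n \in \GL{m,q} \leq G_0 \cap \GL{n,q}$ so every orbit $v^{G_0}$ is closed under negation, and $G_0$ is irreducible on $V$ so any nonzero $G_0$-invariant subset spans $V$. Hence $\Gamma$ is connected and $G$-symmetric precisely when $S = v^{G_0}$ for some $v \in V^\#$.

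For the diameter-$2$ necessary conditions I would use the counting bound from Lemma \ref{lem:HAgraphs}, namely $|V| \leq |G_0|^2 + 1$, i.e.\ $q^{m^t} \leq |G_0|^2 + 1$. The next step is to bound $|G_0|$ above. The tensor action $\GL{m,q}^t \to \GL{V}$ has kernel equal to the scalar $t$-tuples $(\lambda_1 I,\ldots,\lambda_t I)$ with $\prod \lambda_i = 1$, of order $(q-1)^{t-1}$, and $|\langle \tau \rangle| = \ell$ where $q = p^\ell$, so
\[ |G_0| \;=\; \frac{|\GL{m,q}|^t \cdot t!}{(q-1)^{t-1}}\cdot \ell \;<\; q^{m^2 t}\cdot t!\cdot \ell. \]
Squaring and taking $\log_q$ reduces the inequality $q^{m^t} \leq |G_0|^2 + 1$ to a constraint of the shape
\[ m^t \;\leq\; 2m^2 t \;+\; O\!\left(\log_q(t!\cdot \ell)\right). \]

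The rest is a finite case analysis in $(m,t)$. The dominant term $m^t - 2m^2 t$ grows rapidly once either variable is moderate, while the logarithmic error is easily controlled (for fixed $m$, only finitely many $q$ are compatible with the $\C_7$ description, and $\ell \leq \log_2 q$). Thus: $m = 2$ forces $2^t \leq 8t + \varepsilon(q)$, which fails for all $t \geq 6$; for $m \geq 3$ and $t \geq 4$, $m^t$ exceeds $2m^2 t$ comfortably; and for $t = 3$ the inequality $m^3 \leq 6m^2 + \varepsilon(q)$ rules out $m \geq 6$. Together these leave exactly the pairs listed in (1) and (2).

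The main obstacle will be the borderline $(m,t)$ values — e.g.\ $(2,6)$, $(3,4)$, $(6,3)$ — where the crude estimate $|\GL{m,q}| < q^{m^2}$ is not quite sharp enough. For these one has to substitute the exact value $|\GL{m,q}| = q^{m^2}\prod_{i=1}^m (1 - q^{-i})$, restrict $q$ to the smallest admissible prime power (noting that $\C_7$ imposes $q \geq 2$ with no further lower bound here), and verify the inequality numerically. This mirrors the calculations performed in the proof of \cite[Proposition 3.8]{HAcase}; the sole structural difference is the extra cyclic factor $\langle \tau \rangle$ of order $\ell$, which contributes only sub-polynomially in $q$ and so does not enlarge the list of surviving pairs $(m,t)$.
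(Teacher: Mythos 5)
Your proposal follows essentially the same route as the paper's proof: reduction to $\Cay{V,v^{G_0}}$ via Lemma \ref{lem:HAgraphs}, the bound $|V|\leq |G_0|^2+1$, the estimate $|G_0|\leq |\GL{m,q}|^{t}\,t!\,\ell\,(q-1)^{-(t-1)}$ converted into a comparison of $m^t$ against a quadratic polynomial in $t$, and a finite verification of the borderline pairs (the paper handles exactly $(2,6)$ and $(6,3)$ with exact group orders, and your slightly cruder bound $|\GL{m,q}|<q^{m^2}$ merely adds one or two more such pairs, e.g.\ $(3,4)$, to that finite check). One parenthetical remark is false --- for fixed $m$ there are infinitely many $q$ compatible with the $\C_7$ description --- but this is harmless because the error term $\log_q\bigl((t!)^2\ell^2\bigr)$ is bounded uniformly in $q$ by $t(t-1)+2$ (using $s\leq q^{s-1}$ and $\ell<q$), which is precisely how the paper controls it.
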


\begin{proof}
Recall that $(\alpha g_1) \otimes g_2 \otimes \cdots \otimes g_t = g_1 \otimes \cdots \otimes (\alpha g_i) \otimes \cdots \otimes g_t$ for all $g_1, \ldots, g_t \in \GL{m,q}$, so that
	\[ |G_0| \leq |\GL{m,q}|^t \, t! \, \ell (q-1)^{-(t-1)}. \]
Now
	\[ |\GL{m,q}| < q^{m(m-1)} q^{m-1} (q-1) = q^{m^2 - 1} (q-1), \]
$s \leq q^{s-1}$ for all $s \geq 2$ and $q \geq 2$, and $\ell < p^\ell = q$ for all $\ell \geq 1$ and $p \geq 2$, so that
	\[ |G_0|^2 + 1 < \left( q^{(m^2 - 1)t} (q-1)^t \right)^2 \left( q^{\frac{1}{2}t(t-1)} \right)^2 q^2 (q-1)^{-2(t-1)} < q^{t^2 + (2m^2 - 3)t + 4}. \]
It can be shown that $t^2 + (2m^2 - 3)t + 4 < m^t$ whenever $t \geq 7$ and $m \geq 2$, and whenever $t \in \{ 3, 4, 5, 6 \}$ and $m > m_0(t)$, where $m_0(t)$ is as given in Table \ref{table:m_0}. Hence $|G_0|^2 + 1 < |V|$ for all such pairs $(m,t)$. Of the remaining pairs we can eliminate $(2,6)$ and $(6,3)$ by considering $\pi(q,m,t) := (t!)^2 q^{2t(m^2 - 1) + 4} - q^{m^t}$; it can be shown that $\pi(q,2,6) < 0$ for all $q \geq 2$ and $\pi(q,6,3) < 0$ for all $q \geq 7$. For $q \in \{2,3,4,5\}$ it can be checked that $36 \, \ell^2 \, |\GL{6,q}|^6 (q-1)^{-4} + 1 < q^{216}$. Therefore $|G_0|^2 + 1 < |V|$ if $(m,t) \in \{ (2,6), (6,3) \}$, which completes the proof.
	\begin{table}[ht]
	\begin{center}
	\begin{tabular}{ccccc}
	\hline
	$t$ & 3 & 4 & 5 & 6 \\
	\hline\hline
	$m_0(t)$ & 6 & 2 & 2 & 2 \\
	\hline
	\end{tabular}
	\bigskip \caption{Values for $m_0(t)$} \label{table:m_0}
	\end{center}
	\end{table}
\end{proof}

\subsubsection{Case $H = \GamSp{n,q}$}

By Theorem \ref{thm:Asch-Sp}, both $q$ and $t$ are odd and
	\begin{equation} \label{eq:G0-C7-Sp}
	G_0 = (\GSp{m,q} \wr_\otimes \Sym{t}) \rtimes \langle \tau \rangle.
	\end{equation}
Hence $q, t \geq 3$.

\begin{proposition} \label{prop:C7bounds-Sp}
Let $\Gamma$ be a graph and $G \leq \Aut{\Gamma}$ such that $G$ satisfies Hypothesis {\rm\ref{hyp:HA}} with $G_0$ as in {\rm(\ref{eq:G0-C7-Sp})}, $m \geq 2$ and $t \geq 3$. Then $\Gamma$ is connected and $G$-symmetric if and only if $\Gamma \cong \Cay{V,v^{G_0}}$ for some $v \in V^\#$. Moreover, if $\diam{\Gamma} = 2$ then either:
	\begin{enumerate}
	\item $m=2$ and $t \in \{3,5\}$; or
	\item $t=3$, $m = 4$, and $q = 9$.
	\end{enumerate}
\end{proposition}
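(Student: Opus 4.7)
The plan is to mimic the argument of Proposition \ref{prop:C7bounds}, using the necessary condition $|V| \leq |G_0|^2 + 1$ from Lemma \ref{lem:HAgraphs}. Writing $q = p^\ell$, and noting that central scalars in $Z(\GSp{m,q})$ can be absorbed across the $t$ tensor factors, one has
\[ |G_0| \leq \frac{|\GSp{m,q}|^t \, t! \, \ell}{(q-1)^{t-1}}. \]
Using the elementary estimate $|\GSp{m,q}| = (q-1) q^{m^2/4} \prod_{i=1}^{m/2}(q^{2i}-1) < q^{m(m+1)/2 + 1}$, together with $t! \leq q^{t(t-1)/2}$ and $\ell < q$, I would deduce
\[ |G_0|^2 + 1 < q^{tm(m+1) + t^2 + t + 3}. \]

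A short case analysis, using that $m \geq 2$ is even and $t \geq 3$ is odd, shows that the inequality $m^t \geq tm(m+1) + t^2 + t + 3$ holds for every allowed pair $(m,t)$ except the three pairs $(2,3)$, $(2,5)$, and $(4,3)$: specifically it holds for all $m \geq 6$ with $t \geq 3$, for $m = 4$ with $t \geq 5$, and for $m = 2$ with $t \geq 7$. For these excluded pairs one has $|V| > |G_0|^2 + 1$, so $\diam{\Gamma} > 2$. The pairs $(2,3)$ and $(2,5)$ already correspond to conclusion (1) of the proposition, so only the case $(m,t) = (4,3)$ requires further work.

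For $(m,t) = (4,3)$ I would substitute the exact factorisation $|\GSp{4,q}| = (q-1)^3(q+1)^2 q^4 (q^2+1)$. The necessary condition $|G_0|^2 + 1 \geq |V| = q^{64}$ then simplifies (after dividing through by $q^{24}$) to, essentially,
\[ 36\, \ell^2 (q-1)^{14}(q+1)^{12}(q^2+1)^6 \geq q^{40}. \]
Bounding the left-hand side crudely above by $36\,\ell^2 q^{38}$ yields $36\,\ell^2 \geq q^2$, i.e.\ $q \leq 6\ell$, which, since $q = p^\ell$ is an odd prime power, leaves only the finite candidate list $q \in \{3, 5, 9\}$. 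Direct substitution into the displayed inequality then shows it fails for $q \in \{3, 5\}$ (so $\diam{\Gamma} > 2$) but holds for $q = 9$, giving conclusion (2).

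The main technical obstacle is the arithmetic in the case $(m,t) = (4,3)$: the crude bound $|\GSp{4,q}| < q^{11}$ is not sharp enough to separate $q = 9$ from the small values $q = 3, 5$, so the exact factorisation must be used and the three remaining values checked individually, in the style of the $\pi(q,m,t)$-arguments in Propositions \ref{prop:C6bounds} and \ref{prop:C7bounds}. This step is routine but slightly laborious.
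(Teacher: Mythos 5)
Your proposal is correct and follows essentially the same route as the paper: the order bound $|V| \leq |G_0|^2+1$ from Lemma \ref{lem:HAgraphs} applied with $|G_0| \leq |\GSp{m,q}|^t\, t!\, \ell\, (q-1)^{-(t-1)}$, a polynomial-versus-exponential comparison that eliminates all admissible pairs except $(m,t) \in \{(2,3),(2,5),(4,3)\}$, and a finite arithmetic check isolating $q=9$ when $(m,t)=(4,3)$. The only differences are cosmetic: your exponent bound is slightly cruder (but eliminates the same pairs), and you reduce the $(4,3)$ case to $q \in \{3,5,9\}$ via $q \leq 6\ell$ rather than the paper's cut-off $q \leq 31$ followed by a longer list of individual checks.
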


\begin{proof}
In this case $|G_0| \leq |\GSp{m,q}|^t \, t! \, \ell (q-1)^{-(t-1)}$, where
	\[ |\GSp{m,q}| = (q-1) \Sp{m,q} < (q-1) q^{\frac{1}{2}(m^2 + m)}. \]
Also $s \leq k^{s/2}$ for all $k \geq 3$ and $s \geq 2$, so that $\ell \leq q$, $t! \leq q^{\frac{1}{4}(t-1)(t+2)}$, and
	\[ |G_0|^2 + 1 < (q-1)^{2t} q^{t(m^2 + m) + \frac{1}{2}(t-1)(t+2) + 1} (q-1)^{-2(t-1)} < q^{\frac{1}{2}t^2 + \left( m^2 + m + \frac{1}{2}\right)t + 2}. \]
It can be shown that $\frac{1}{2}t^2 + \left( m^2 + m + \frac{1}{2} \right)t + 2 < m^t$ whenever $t \geq 6$ and $m \geq 2$, $t = 3$ and $m \geq 5$, and $t = 5$ and $m \geq 3$. So $|G_0|^2 + 1 < |V|$ for all such pairs $(m,t)$. Let $\pi(q,m,t) := (t!)^2 q^{t(m^2 + m) + 3} - q^{m^t}$. If $(m,t) = (4,3)$ then for all $q \geq 37$ we get
	\[ |G_0|^2 + 1 - |V| < \pi(q,4,3) < 0. \]
For $3 \leq q \leq 31$, $q \neq 9$, we have $36 \, \ell^2 \, (q-1)^2 |\Sp{4,q}|^6 + 1 < q^{64}$. Therefore if $(m,t) = (4,3)$ and $q \neq 9$ then $|G_0|^2 + 1 < |V|$, which completes the proof.
\end{proof}

\subsection{Proof of Theorem \ref{mainthm:HA}}

We now give the proof of Theorem \ref{mainthm:HA}.

\begin{proof}[Proof of Theorem \ref{mainthm:HA}]
The first part follows immediately from Lemma \ref{lem:HAgraphs}, so we only need to show statements (1) -- (3). Assume that $G_0$ does not belong in the Aschbacher class $\C_9$. Line 1 of Table \ref{tab:mainthm-HA} follows from Proposition \ref{prop:C2-diam2}, line 2 from Proposition \ref{prop:C4-diam2}, lines 3 and 4 from Proposition \ref{prop:C5main} (1) and (2), respectively. Line 5 follows from Proposition \ref{prop:C6bounds} (4), line 5 from Corollary \ref{cor:C7-twofactors}, line 7 from Proposition \ref{prop:C8-GUdiam2}, and lines 8 -- 11 from Proposition \ref{prop:C8-GOdiam2}. Line 1 of Table \ref{tab:mainthm-HA-Sp} follows from Proposition \ref{prop:C2-diam2-Sp} (1), line 2 from Proposition \ref{prop:C2-diam2-Sp} (2) and (3), and line 3 from Proposition \ref{prop:C6bounds} (4). Lines 4 -- 6 follow from Proposition \ref{prop:C8-GOdiam2}. This proves statement (1).

Statement (2) follows from the results given in the Restrictions column of Table \ref{tab:mainthm-HA-Sp}. This completes the proof of Theorem \ref{mainthm:HA}.
\end{proof}


\end{document}